\theoremstyle{plain}
\newtheorem{theorem}{Theorem}[section]
\newtheorem{lemma}[theorem]{Lemma}
\newtheorem{proposition}[theorem]{Proposition}
\newtheorem{corollary}[theorem]{Corollary}
\newtheorem{conjecture}[theorem]{Conjecture}
\newtheorem{problem}[theorem]{Problem}
\theoremstyle{definition}
\newtheorem{definition}[theorem]{Definition}
\newtheorem{example}[theorem]{Example}
\theoremstyle{remark}
\newtheorem{remark}[theorem]{Remark}
\numberwithin{equation}{section}
\begin{document}


\title{Aluthge transforms of $2$-variable weighted shifts}
\author{Ra\'{u}l E. Curto\footnote{The first-named author was partially supported by NSF Grants
DMS-0801168 and DMS-1302666.}}
\address{Department of Mathematics, The University of Iowa, Iowa City, Iowa
52242}
\ead{raul-curto@uiowa.edu}
\author{Jasang Yoon}
\address{School of Mathematical and Statistical Sciences, The University of Texas Rio Grande Valley,
Edinburg, Texas 78539}
\ead{jasang.yoon@utrgv.edu}

\date{}

\begin{abstract} \ We introduce two natural notions of multivariable Aluthge transforms (toral and spherical), and study their basic properties. \ In the case of $2$-variable
weighted shifts, we first prove that the toral Aluthge transform does not preserve (joint) hyponormality, in sharp contrast with the $1$-variable
case. \ Second, we identify a large class of $2$-variable weighted shifts for which hyponormality is preserved under both transforms. \ Third, we
consider whether these Aluthge transforms are norm-continuous. \ Fourth, we
study how the Taylor and Taylor essential spectra of $2$-variable
weighted shifts behave under the toral and spherical Aluthge transforms; as a special case, we consider the Aluthge transforms of the Drury-Arveson $2$-shift. \ Finally, we briefly discuss the class of spherically quasinormal $2$-variable weighted shifts, which are the fixed points for the spherical Aluthge transform.
\end{abstract}

\begin{keyword} toral and spherical Aluthge transforms, hyponormal, $2$-variable weighted shifts, Taylor spectrum

\medskip

\textit{2010 Mathematics Subject Classification.} \ Primary 47B20, 47B37, 47A13, 47B49; Secondary 47-04, 47A20, 47A60, 44A60

\end{keyword}

\maketitle

\tableofcontents

\setcounter{tocdepth}{2}


\section{\label{Int}Introduction}

Over the last two decades, the Aluthge transform for bounded operators on Hilbert space has attracted considerable attention. \ In this note, we set out to extend the Aluthge transform to commuting $n$-tuples of bounded operators. \ We identify two natural notions (toral and spherical) and study their basic properties. \ We then focus on $2$-variable weighted shifts, for which much can be said. \

Let $\mathcal{H}$ be a complex Hilbert space and let $\mathcal{B}(\mathcal{H}%
)$ denote the algebra of bounded linear operators on $\mathcal{H}$. $\ $For $%
T\in \mathcal{B}(\mathcal{H})$, the polar decomposition of $T$ is $T \equiv V|T|$,
where $V$ is a partial isometry with $\textrm{ker} \; V=\textrm{ker} T$ and $|T|:=\sqrt{T^{\ast }T}$. \ The \textit{%
Aluthge transform} of $T$ is the operator $\widetilde{T}:=|T|^{\frac{1}{2}%
}V|T|^{\frac{1}{2}}$ \cite{Alu}. \ This transform was first considered by A. Aluthge, in
an effort to study $p$-hyponormal and $\mathrm{\log }$-hyponormal operators. \
Roughly speaking, the idea behind the Aluthge transform is to convert an
operator into another operator which shares with the first one many spectral
properties, but which is closer to being a normal operator. \

In recent years,
the Aluthge transform has received substantial attention. \ I.B. Jung, E. Ko and C. Pearcy
proved in \cite{JKP} that $T$ has a nontrivial invariant subspace if and
only if $\widetilde{T}$ does. \ (Since every normal operator has nontrivial
invariant subspaces, the Aluthge transform has a natural connection with the
invariant subspace problem.) \ In \cite{JKP2}, I.B. Jung, E. Ko and C. Pearcy also
proved that $T$ and $\widetilde{T}$ have the same spectrum. \ Moreover, in \cite{KiKo} (resp. \cite{Kim}) M.K. Kim and E. Ko (resp. F. Kimura) proved that $T$ has property $(\beta )$ if and only if $\widetilde{T}$ has property $(\beta )$. \ Finally, T. Ando proved in \cite{Ando} that $\left\| (T-\lambda )^{-1}\right\| \geq \left\| (\widetilde{T}-\lambda)^{-1}\right\| \;(\lambda \notin \sigma (T))$. \ (For additional results, see \cite{CJL} and \cite{Yam}.)

For a unilateral weighted shift $W_{\alpha }\equiv \mathrm{shift}(\alpha _{0},\alpha
_{1},\cdots )$, the Aluthge transform $\widetilde{W}_{\alpha }$ is also a unilateral weighted shift, given by
\begin{equation}
\widetilde{W}_{\alpha }\equiv \mathrm{shift}(\sqrt{\alpha _{0}\alpha _{1}},%
\sqrt{\alpha _{1}\alpha _{2}},\cdots )\text{ (see \cite{LLY}).}
\label{Alu-hypo}
\end{equation}%
It is easy to see that $W_{\alpha }$ is
hyponormal if and only if $\alpha _{0}\leq \alpha _{1}\leq \cdots $. \ Thus,
by (\ref{Alu-hypo}), if $W_{\alpha }$ is hyponormal, then the Aluthge
transform $\widetilde{W}_{\alpha }$ of $W_{\alpha }$ is also hyponormal. \
However, the converse is not true in general. \ For example, if $W_{\alpha
}\equiv \mathrm{shift}\left( \frac{1}{2},2,\frac{1}{2},2,\frac{1}{2}%
,2,\cdots \right) $, then $W_{\alpha }$ is clearly not hyponormal but the
Aluthge transform $\widetilde{W}_{\alpha } \equiv U_{+}$ is subnormal. \ (Here and in what follows, $U_{+}$ denotes the (unweighted) unilateral shift.) \ In \cite%
{LLY}, S.H. Lee, W.Y. Lee and the second-named author showed that
for $k\geq 2$, the Aluthge transform, when acting on weighted shifts, need not preserve
$k$-hyponormality. \ Finally, G. Exner proved in \cite{Ex} that the Aluthge transform of a subnormal weighted shift need not be subnormal.

In this article, we introduce two Aluthge transforms of commuting pairs of Hilbert space operators, with special emphasis on $2$-variable
weighted shifts $W_{(\alpha ,\beta )}\equiv (T_{1},T_{2})$. \ Since a priori there are several possible notions, we discuss two plausible definitions and their basic properties in Sections \ref{Sect3} and \ref{Sec2}. \ Our research will allow us to compare both definitions in terms of how well they generalize the $1$-variable notion. \ After discussing some basic properties of each Aluthge transform, we proceed to study both transforms in the case of $2$-variable weighted shifts. \ We consider topics such as preservation of joint hyponormality, norm continuity, and Taylor spectral behavior.

For $i=1,2$, we consider the polar decomposition $T_{i}\equiv V_{i}\left\vert
T_{i}\right\vert $, and we let
\begin{equation}
\widetilde{T_i}:=|T_{i}|^{\frac{1}{2}}V_{i}|T_{i}|^{\frac{1}{2}} \; \; \; (i=1,2) \label{Def-Alu}
\end{equation}
denote the classical ($1$-variable) Aluthge transform. \ The {\it toral} Aluthge transform of the pair $(T_{1},T_{2})$ is $\widetilde{(T_{1},T_{2})}:=(\widetilde{T_1},\widetilde{T_2})$. \ For a $2$-variable weighted shift $W_{(\alpha ,\beta )} \equiv (T_{1},T_{2})$, we denote the toral Aluthge transform
of $W_{(\alpha ,\beta )}$ by $\widetilde{W}_{(\alpha ,\beta )}$. \

As we will see in Proposition \ref{commuting1}, the commutativity of $\widetilde{W}_{(\alpha ,\beta )}$ does not automatically follow from the commutativity of $W_{(\alpha ,\beta )}$; in fact, the necessary and sufficient condition to preserve commutativity is
\begin{equation}
\alpha _{(k_{1},k_{2}+1)}\alpha
_{(k_{1}+1,k_{2}+1)}=\alpha _{(k_{1}+1,k_{2})}\alpha _{(k_{1},k_{2}+2)} \; \; (\text{for all }k_{1},k_{2}\geq 0). \label{alphacomm}
\end{equation}

Under this assumption, and in sharp contrast with the $1$-variable situation, it is possible to exhibit a {\it commuting subnormal} pair $W_{(\alpha ,\beta )}$ such that $\widetilde{W}_{(\alpha ,\beta )}$ is commuting and {\it not hyponormal}. \ As a matter of fact, in Theorem \ref{example100} we construct a class of subnormal $2$-variable weighted shifts $W_{(\alpha ,\beta )}$ whose cores are of tensor form, and for which the hyponormality of $\widetilde{W}_{(\alpha ,\beta )}$ can be described entirely by two parameters. \ As a result, we obtain a rather large class of subnormal $2$-variable weighted shifts with non-hyponormal toral Aluthge transforms.

There is a second plausible definition of Aluthge transform, which uses a joint polar decomposition. \ Assume that we have a decomposition of the form
\begin{equation*}
(T_{1},T_{2})\equiv\left( V_{1}P,V_{2}P\right) \text{,}
\end{equation*}
where $P:=\sqrt{T_{1}^{\ast }T_{1}+T_{2}^{\ast }T_{2}}$. \ Now, let
\begin{equation}
\widehat{(T_1,T_2)}:=\left( \sqrt{P}V_{1}\sqrt{P},\sqrt{P}V_{2}%
\sqrt{P}\right) \text{,}  \label{Def-Alu1}
\end{equation}%
We refer to $\widehat{\mathbf{T}}$ as the {\it spherical} Aluthge transform of $\mathbf{T}$. \
Even though $%
\widehat{T}_{1}=\sqrt{P}V_{1}\sqrt{P}$ is not the Aluthge transform of $%
T_{1} $, we observe in Section \ref{Sec2} that $Q:=\sqrt{V_{1}^{\ast }V_{1}+V_{2}^{\ast }V_{2}}$ is a
(joint) partial isometry; for, $PQ^2P=P^2$, from which it follows that $Q$ is isometric on the range of $P$. \ We will prove in Section \ref{Sec2} that this particular definition of the Aluthge transform preserves commutativity. \

There is another useful aspect of the spherical Aluthge transform, which we now mention. \ If we consider the fixed points of this transform acting on $2$-variable weighted shifts, then we obtain an appropriate generalization of the concept of quasinormality. \ Recall that a Hilbert space operator $T$ is said to be {\it quasinormal} if $T$ commutes with the positive factor $P$ in the polar decomposition $T\equiv VP$; equivalently, if $V$ commutes with $P$. \ It follows easily that $T$ is quasinormal if and only if $T=\widetilde{T}$, that is, if and only if $T$ is a fixed point for the Aluthge transform. \ In Section \ref{Spherquasi}, we prove that if a $2$-variable weighted shift $W_{(\alpha,\beta)}=(T_1,T_2)$ satisfies $W_{(\alpha,\beta)}=\left(
\widehat{T}_{1},\widehat{T}_{2}\right)$, then $T_1^*T_1+T_2^*T_2$ is, up to scalar multiple, a spherical isometry. \ It follows that we can then study some properties of the spherical Aluthge transform using well known results about spherical isometries.

In this paper, we also focus on the following three basic problems.

\begin{problem}
\label{problem 1}Let $k\geq 1$ and assume that $W_{(\alpha ,\beta )}$ is $k$-hyponormal. \ Does it follow that the toral Aluthge transform $\widetilde{W}_{(\alpha ,\beta )}$ is $k$%
-hyponormal? \ What about the case of the spherical Aluthge transform $\widehat{W}_{(\alpha ,\beta )}$?
\end{problem}

\begin{problem}
\label{problem 2} Is the toral Aluthge transform $\left( S,T\right)
\rightarrow \left( \widetilde{S},\widetilde{T}\right) $ continuous in the uniform topology? \ Similarly, does continuity hold for the spherical Aluthge transform $\widehat{W}_{(\alpha ,\beta )}$?
\end{problem}

\begin{problem}
\label{problem 3}Does the Taylor spectrum (resp. Taylor essential spectrum) of $%
\widetilde{W}_{(\alpha ,\beta )}$ equal to that of $W_{(\alpha ,\beta )}$ ? \ What about the case of the spherical Aluthge transform $\widehat{W}_{(\alpha ,\beta )}$?
\end{problem}


\section{\label{Sect1}Notation and Preliminaries}

\subsection{Subnormality and $k$-hyponormality} \ We say that $T\in \mathcal{B}(\mathcal{H})$ is \textit{normal} if $T^{\ast
}T=TT^{\ast }$, \textit{quasinormal} if $T$ commutes with $T^{\ast }T$, \textit{subnormal} if $T=N|_{\mathcal{H}}$, where $N$ is
normal and $N(\mathcal{H}\mathcal{)}$ $\mathcal{\subseteq H}$, and \textit{%
hyponormal} if $T^{\ast }T\geq TT^{\ast }$. \ For $S,T\in \mathcal{B}(%
\mathcal{H})$, let $[S,T]:=ST-TS$. \ We say that an $n$-tuple $\mathbf{T} \equiv (T_{1},\cdots ,T_{n})$ of operators on $\mathcal{H}$ is (jointly) \textit{%
hyponormal} if the operator matrix
\begin{equation*}
\lbrack \mathbf{T}^{\ast },\mathbf{T]:=}\left(
\begin{array}{llll}
\lbrack T_{1}^{\ast },T_{1}] & [T_{2}^{\ast },T_{1}] & \cdots & [T_{n}^{\ast
},T_{1}] \\
\lbrack T_{1}^{\ast },T_{2}] & [T_{2}^{\ast },T_{2}] & \cdots & [T_{n}^{\ast
},T_{2}] \\
\text{ \thinspace \thinspace \quad }\vdots & \text{ \thinspace \thinspace
\quad }\vdots & \ddots & \text{ \thinspace \thinspace \quad }\vdots \\
\lbrack T_{1}^{\ast },T_{n}] & [T_{2}^{\ast },T_{n}] & \cdots & [T_{n}^{\ast
},T_{n}]%
\end{array}%
\right)
\end{equation*}%
is positive on the direct sum of $n$ copies of $\mathcal{H}$ (cf. \cite{Ath}%
, \cite{CMX}). \ For instance, if $n=2$,
\begin{equation*}
\lbrack \mathbf{T}^{\ast },\mathbf{T}\rbrack = \left(
\begin{array}{ll}
\lbrack T_{1}^{\ast },T_{1}] & [T_{2}^{\ast },T_{1}] \\
\lbrack T_{1}^{\ast },T_{2}] & [T_{2}^{\ast },T_{2}]%
\end{array}%
\right) \text{.}
\end{equation*}%
For $k\geq 1$ $T$ is $k$\textit{-hyponormal} if $\left( I,T,\cdots
,T^{k}\right) $ is (jointly) hyponormal. \ The Bram-Halmos characterization
of subnormality (\cite[III.1.9]{Con}) can be paraphrased as follows: \ $T$
is subnormal if and only if $T$ is $k$-hyponormal for every $k\geq 1$ (\cite[%
Proposition 1.9]{CMX}). \ The $n$-tuple $\mathbf{T}\equiv
(T_{1},T_{2},\cdots ,T_{n})$ is said to be \textit{normal} if $\mathbf{T}$
is commuting and each $T_{i}$ is normal, and $\mathbf{T}$ is \textit{%
subnormal} if $\mathbf{T}$ is the restriction of a normal $n$-tuple to a
common invariant subspace. \ In particular, a commuting pair $\mathbf{T}%
\equiv (T_{1},T_{2})$ is said to be $k$\textit{-hyponormal }$(k\geq 1)$ \cite%
{CLY1} if
\begin{equation*}
\mathbf{T}(k):=(T_{1},T_{2},T_{1}^{2},T_{2}T_{1},T_{2}^{2},\cdots
,T_{1}^{k},T_{2}T_{1}^{k-1},\cdots ,T_{2}^{k})
\end{equation*}%
is hyponormal, or equivalently
\begin{equation*}
\lbrack \mathbf{T}(k)^{\ast },\mathbf{T}(k)]=([(T_{2}^{q}T_{1}^{p})^{\ast
},T_{2}^{m}T_{1}^{n}])_{_{1\leq p+q\leq k}^{1\leq n+m\leq k}}\geq 0.
\end{equation*}%
Clearly, for $T \in \mathcal{B}(\mathcal{H})$ we have
$$
\textrm{normal} \Rightarrow  \textrm{quasinormal} \Rightarrow  \textrm{subnormal} \Rightarrow  k \textrm{-hyponormal} \Rightarrow  \textrm{hyponormal}.
$$
As one might expect, there is a version of the Bram-Halmos Theorem in several variables, proved in \cite{CLY1}: a commuting pair which is $k$-hyponormal for every $k \ge 1$ is necessarily subnormal.

\subsection{Unilateral weighted shifts} \ For $\alpha \equiv \{\alpha _{n}\}_{n=0}^{\infty }$ a bounded sequence of
positive real numbers (called \textit{weights}), let $W_{\alpha }:\ell ^{2}(%
\mathbb{Z}_{+})\rightarrow \ell ^{2}(\mathbb{Z}_{+})$ be the associated
\textit{unilateral weighted shift}, defined by $W_{\alpha }e_{n}:=\alpha
_{n}e_{n+1}\;($all $n\geq 0)$, where $\{e_{n}\}_{n=0}^{\infty }$ is the
canonical orthonormal basis in $\ell ^{2}(\mathbb{Z}_{+}).$ \ We will often
write $\mathrm{shift}(\alpha _{0},\alpha _{1},\cdots )$ to denote the
weighted shift $W_{\alpha }$ with a weight sequence $\{\alpha
_{n}\}_{n=0}^{\infty }$. \ As usual, the (unweighted) unilateral shift will be denoted by $U_{+}:=\mathrm{shift} (1,1,\cdots)$. \ The \textit{moments} of $W_{\alpha }$ are given
by
\begin{equation*}
\gamma _{k}\equiv \gamma _{k}(W_{\alpha }):=\left\{
\begin{array}{cc}
1 & \text{if }k=0\text{ } \\
\alpha _{0}^{2}\alpha _{1}^{2}\cdots \alpha _{k-1}^{2} & \text{if }k>0.%
\end{array}%
\right.
\end{equation*}%

\subsection{$2$-variable weighted shifts} \ Similarly, consider double-indexed positive bounded sequences $\alpha _{%
\mathbf{k}},\beta _{\mathbf{k}}\in \ell ^{\infty }(\mathbb{Z}_{+}^{2})$, $%
\mathbf{k}\equiv (k_{1},k_{2})\in \mathbb{Z}_{+}^{2}:=\mathbb{Z}_{+}\times
\mathbb{Z}_{+}$ and let $\ell ^{2}(\mathbb{Z}_{+}^{2})$\ be the Hilbert
space of square-summable complex sequences indexed by $\mathbb{Z}_{+}^{2}$.

We define the $2$-variable weighted shift $W_{(\alpha ,\beta )}\equiv
(T_{1},T_{2})$\ by
\begin{equation*}
T_{1}e_{\mathbf{k}}:=\alpha _{\mathbf{k}}e_{\mathbf{k+}\varepsilon _{1}}%
\text{ and }T_{2}e_{\mathbf{k}}:=\beta _{\mathbf{k}}e_{\mathbf{k+}%
\varepsilon _{2}},
\end{equation*}%
where $\mathbf{\varepsilon }_{1}:=(1,0)$ and $\mathbf{\varepsilon }%
_{2}:=(0,1)$ (see Figure \ref{Figure 1}(i)). \ Clearly,
\begin{equation}
T_{1}T_{2}=T_{2}T_{1}\Longleftrightarrow \alpha _{\mathbf{k+}\varepsilon _{2}}\beta _{%
\mathbf{k}}=\beta _{\mathbf{k+}\varepsilon
_{1}}\alpha _{\mathbf{k}} \;(\text{all }\mathbf{k}\in \mathbb{Z}_{+}^{2}).
\label{commuting}
\end{equation}%
In an entirely similar way, one can define multivariable weighted shifts. \
Trivially, a pair of unilateral weighted shifts $W_{\sigma }$ and $W_{\tau
} $ gives rise to a $2$-variable weighted shift $W_{(\alpha ,\beta )}\equiv
\mathbf{T}\equiv (T_{1},T_{2})$, if we let $\alpha _{(k_{1},k_{2})}:=\sigma
_{k_{1}}$ and $\beta _{(k_{1},k_{2})}:=\tau _{k_{2}}\;($all $k_{1},k_{2}\in
\mathbb{Z}_{+})$. \ In this case, $W_{(\alpha ,\beta )}$ is subnormal (resp.
hyponormal) if and only if $T_{1}$ and $T_{2}$ are as well; in fact, under
the canonical identification of $\ell ^{2}(\mathbb{Z}_{+}^{2})$ with $\ell
^{2}(\mathbb{Z}_{+})\bigotimes \ell ^{2}(\mathbb{Z}_{+})$, we have $%
T_{1}\cong I\bigotimes W_{\sigma }$ and $T_{2}\cong W_{\tau }\bigotimes I$,
and $W_{(\alpha ,\beta )}$ is also doubly commuting. \ For this reason, we
do not focus attention on shifts of this type, and use them only when the
above mentioned triviality is desirable or needed.

\setlength{\unitlength}{1mm} \psset{unit=1mm}
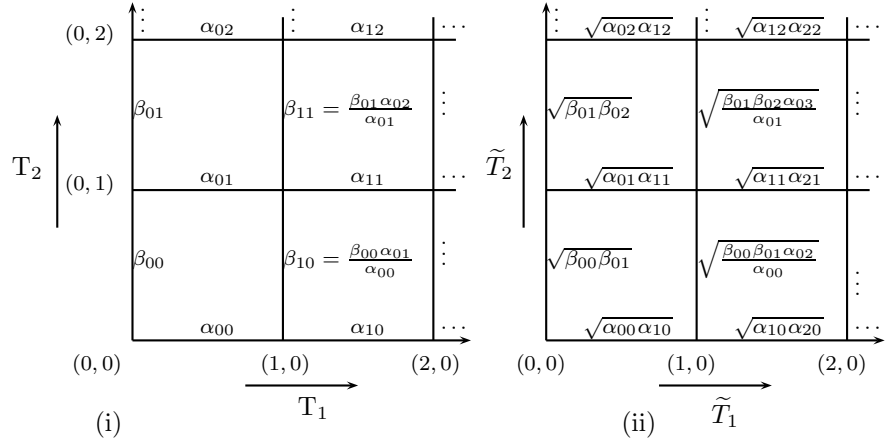
\begin{figure}[th]
\begin{center}
\begin{picture}(135,70)

\psline{->}(20,20)(65,20)
\psline(20,40)(63,40)
\psline(20,60)(63,60)
\psline{->}(20,20)(20,65)
\psline(40,20)(40,63)
\psline(60,20)(60,63)

\put(12,16){\footnotesize{$(0,0)$}}
\put(37,16){\footnotesize{$(1,0)$}}
\put(57,16){\footnotesize{$(2,0)$}}

\put(29,21){\footnotesize{$\alpha_{00}$}}
\put(49,21){\footnotesize{$\alpha_{10}$}}
\put(61,21){\footnotesize{$\cdots$}}

\put(29,41){\footnotesize{$\alpha_{01}$}}
\put(49,41){\footnotesize{$\alpha_{11}$}}
\put(61,41){\footnotesize{$\cdots$}}

\put(29,61){\footnotesize{$\alpha_{02}$}}
\put(49,61){\footnotesize{$\alpha_{12}$}}
\put(61,61){\footnotesize{$\cdots$}}

\psline{->}(35,14)(50,14)
\put(42,10){$\rm{T}_1$}
\psline{->}(10,35)(10,50)
\put(4,42){$\rm{T}_2$}

\put(11,40){\footnotesize{$(0,1)$}}
\put(11,60){\footnotesize{$(0,2)$}}

\put(20,30){\footnotesize{$\beta_{00}$}}
\put(20,50){\footnotesize{$\beta_{01}$}}
\put(21,61){\footnotesize{$\vdots$}}

\put(40,30){\footnotesize{$\beta_{10}=\frac{\beta_{00}\alpha_{01}}{\alpha_{00}}$}}
\put(40,50){\footnotesize{$\beta_{11}=\frac{\beta_{01}\alpha_{02}}{\alpha_{01}}$}}
\put(41,61){\footnotesize{$\vdots$}}

\put(61,30){\footnotesize{$\vdots$}}
\put(61,50){\footnotesize{$\vdots$}}

\put(15,8){(i)}


\put(85,8){(ii)}

\psline{->}(90,14)(105,14)
\put(97,9){$\widetilde{T}_{1}$}
\psline{->}(72,35)(72,50)
\put(67,42){$\widetilde{T}_{2}$}

\psline{->}(75,20)(120,20)
\psline(75,40)(118,40)
\psline(75,60)(118,60)

\psline{->}(75,20)(75,65)
\psline(95,20)(95,63)
\psline(115,20)(115,63)

\put(71,16){\footnotesize{$(0,0)$}}
\put(91,16){\footnotesize{$(1,0)$}}
\put(111,16){\footnotesize{$(2,0)$}}

\put(80,21){\footnotesize{$\sqrt{\alpha_{00}\alpha_{10}}$}}
\put(100,21){\footnotesize{$\sqrt{\alpha_{10}\alpha_{20}}$}}
\put(116,21){\footnotesize{$\cdots$}}

\put(80,41){\footnotesize{$\sqrt{\alpha_{01}\alpha_{11}}$}}
\put(100,41){\footnotesize{$\sqrt{\alpha_{11}\alpha_{21}}$}}
\put(116,41){\footnotesize{$\cdots$}}

\put(80,61){\footnotesize{$\sqrt{\alpha_{02}\alpha_{12}}$}}
\put(100,61){\footnotesize{$\sqrt{\alpha_{12}\alpha_{22}}$}}
\put(116,61){\footnotesize{$\cdots$}}

\put(75,30){\footnotesize{$\sqrt{\beta_{00}\beta_{01}}$}}
\put(75,50){\footnotesize{$\sqrt{\beta_{01}\beta_{02}}$}}
\put(76,61){\footnotesize{$\vdots$}}

\put(95,30){\footnotesize{$\sqrt{\frac{\beta_{00}\beta_{01}\alpha_{02}}{\alpha_{00}}}$}}
\put(95,50){\footnotesize{$\sqrt{\frac{\beta_{01}\beta_{02}\alpha_{03}}{\alpha_{01}}}$}}
\put(96,61){\footnotesize{$\vdots$}}

\put(116,26){\footnotesize{$\vdots$}}
\put(116,50){\footnotesize{$\vdots$}}

\end{picture}
\end{center}
\caption{Weight diagram of a commutative $2$-variable weighted shift $W_{(\protect\alpha ,%
\protect\beta )}\equiv (T_{1},T_{2})$ and weight diagram of its toral Aluthge
transform $\widetilde{W}_{(\protect\alpha ,\protect\beta )}\equiv \widetilde{%
(T_{1},T_{2})}\equiv (\widetilde{T}_{1},\widetilde{T}_{2})$, respectively. \ Observe that the commutativity of $\widetilde{W}_{(\protect\alpha ,\protect\beta )}$ requires (\ref{alphacomm}).}
\label{Figure 1}
\end{figure}

\subsection{Moments and subnormality} \ Given $\mathbf{k}\in \mathbb{Z}_{+}^{2}$, the \textit{moments} of $W_{(\alpha ,\beta
)}$ are
\begin{eqnarray*}
\gamma _{\mathbf{k}} &\equiv &\gamma _{\mathbf{k}}(W_{(\alpha ,\beta )}) \\
&:&=%
\begin{cases}
1, & \text{if }k_{1}=0\text{ and }k_{2}=0 \\
\alpha _{(0,0)}^{2}\cdots \alpha _{(k_{1}-1,0)}^{2}, & \text{if }k_{1}\geq 1%
\text{ and }k_{2}=0 \\
\beta _{(0,0)}^{2}\cdots \beta _{(0,k_{2}-1)}^{2}, & \text{if }k_{1}=0\text{
and }k_{2}\geq 1 \\
\alpha _{(0,0)}^{2}\cdots \alpha _{(k_{1}-1,0)}^{2}\beta
_{(k_{1},0)}^{2}\cdots \beta _{(k_{1},k_{2}-1)}^{2}, & \text{if }k_{1}\geq 1%
\text{ and }k_{2}\geq 1.%
\end{cases}%
\end{eqnarray*}
We remark that, due to the commutativity condition (\ref{commuting}), $%
\gamma _{\mathbf{k}}$ can be computed using any nondecreasing path from $%
(0,0)$ to $(k_{1},k_{2})$. \ 

We now recall a well known characterization of
subnormality for multivariable weighted shifts \cite{JeLu}, due to C. Berger
(cf. \cite[III.8.16]{Con}) and independently established by Gellar and
Wallen \cite{GeWa} in the $1$-variable case: $W_{(\alpha,\beta)} \equiv (T_{1},T_{2})$ admits a commuting normal extension if and only if there is a
probability measure $\mu $ (which we call the \textit{Berger measure} of $W_{(\alpha,\beta)}$ defined on the $2$-dimensional rectangle $R=[0,a_{1}]\times
\lbrack 0,a_{2}]$ (where $a_{i}:=\left\Vert T_{i}\right\Vert ^{2}$) such that%
\begin{equation*}
\gamma _{\mathbf{k}}(W_{(\alpha ,\beta )})=\int_{R}t^{\mathbf{k}}d\mu
(s,t):=\int_{R}s^{k_{1}}t^{k_{2}}d\mu (s,t)\text{, for all }\mathbf{k}\in
\mathbb{Z}_{+}^{2}.
\end{equation*}%
\ For $i\geq 1$, we let $\mathcal{L}_{i}:=\bigvee \{e_{k_{1}}:k_{1}\geq i\}$
denote the invariant subspace obtained by removing the first $i$ vectors in
the canonical orthonormal basis of $\ell ^{2}(\mathbb{Z}_{+})$; we also let $\mathcal{L}:=\mathcal{L}_{1}$. \ In the $1$%
-variable case, if $W_{\alpha }$ is subnormal with Berger measure $\xi
_{\alpha }$, then the Berger measure of $W_{\alpha }|_{\mathcal{L}_{i}}$ is $%
d\xi _{\alpha }|_{\mathcal{L}_{i}}(s):=\frac{s^{i}}{\gamma _{i}\left(
W_{\alpha }\right) }d\xi _{\alpha }(s)$, where $W_{\alpha }|_{\mathcal{L}%
_{i}}$ means the restriction of $W_{\alpha }$ to $\mathcal{L}_{i}$. \ As above, $U_{+}$ is the (unweighted) unilateral
shift, and for $0<a<1$ we let $S_{a}:=\mathrm{shift}(a,1,1,\cdots )$. \ Let $%
\delta _{p}$ denote the point-mass probability measure with support the
singleton set $\{p\}$. \ Observe that $U_{+}$ and $S_{a}$ are subnormal,
with Berger measures $\delta _{1}$ and $(1-a^{2})\delta _{0}+a^{2}\delta
_{1} $, respectively.

\subsection{Taylor spectra} \ We conclude this section with some terminology needed to describe the Taylor and Taylor essential spectra of commuting $n$-tuples of operators on a Hilbert space. \ Let $\Lambda \equiv \Lambda _{n}[e]$ be the \textit{complex exterior algebra
}on $n$ generators $e_{1},\ldots ,e_{n}$ with identity $e_{0}\equiv 1$,
multiplication denoted by $\wedge $ (wedge product) and complex
coefficients, subject to the collapsing property $e_{i}\wedge
e_{j}+e_{j}\wedge e_{i}=0$ $\left( 1\leq i,j\leq 1\right) $. \ If one
declares $\left\{ e_{I}\equiv e_{i_{1}}\wedge \ldots \wedge e_{i_{k}}:I\in
\{1,\ldots ,n\}\right\} $ to be an orthonormal basis, the exterior algebra
becomes a Hilbert space with the canonical inner product, i.e., $\left\langle
e_{I},e_{J}\right\rangle :=0$ if $I\neq J$, $\left\langle
e_{I},e_{J}\right\rangle :=1$ if $I=J$. \ It also admits an orthogonal
decomposition $\Lambda =\oplus _{i=0}^{n}\Lambda ^{i}$ with $\Lambda
^{i}\wedge \Lambda ^{k}\subset \Lambda ^{i+k}$. \ Moreover, $\dim \Lambda
^{k}=\binom{n}{k}=\frac{n!}{k!(n-k)!}$. \ Let $E_{i}:\Lambda \rightarrow
\Lambda $ denote the \textit{creation operator}, given by $\xi \longmapsto e_{i}\wedge \xi $ \; ($i=1,\cdots,n$). \ We recall that $E_{i}^{\ast
}E_{j}+E_{j}E_{i}^{\ast }=\delta _{ij}$ and $E_{i}$ is a partial isometry (all $i,j=1,\cdots,n$). \ Consider a Hilbert space $\mathcal{H}$ and set $\Lambda \left( \mathcal{H}\right) :=\oplus
_{i=0}^{n}\mathcal{H}\otimes _{\mathbb{C}}\Lambda ^{i}$.\ \ For a commuting $n$-tuple $\mathbf{T} \equiv \left(
T_{1},\ldots ,T_{n}\right)$ of bounded operators on $\mathcal{H}$, define
$$
D_{\mathbf{T}}:\Lambda \left( \mathcal{H}\right) \rightarrow \Lambda \left( \mathcal{H}\right)
\text{ by }D_{\mathbf{T}}\left( x\otimes \xi \right)
=\sum_{i=1}^{n}T_{i}x\otimes e_{i}\wedge \xi \text{.}
$$
Then $D_{\mathbf{T}}\circ D_{\mathbf{T%
}}=0$, so $Ran D_{\mathbf{T}}\subseteq Ker D_{\mathbf{T}}$. \ This
naturally leads to a cochain complex, called the \textit{Koszul complex} $K(%
\mathbf{T,}\mathcal{H})$ associated to $\mathbf{T}$ on $\mathcal{H}$, as follows:%
$$
K(\mathbf{T,}\mathcal{H}):0 \overset{0}{\rightarrow } \mathcal{H}\otimes
\wedge ^{0} \overset{D_{\mathbf{T}}^{0}}{\rightarrow } \mathcal{H}%
\otimes \wedge ^{1} \overset{D_{\mathbf{T}}^{1}}{\rightarrow } \cdots  
\overset{D_{\mathbf{T}}^{n-1}}{\rightarrow } \mathcal{H}\otimes \wedge ^{n}
\overset{D_{\mathbf{T}}^{n}\equiv 0}{\rightarrow } 0\text{,}
$$
where $D_{\mathbf{T}}^{i}$ denotes the restriction of $D_{\mathbf{T}}$ to
the subspace $\mathcal{H}\otimes \wedge ^{i}$. \ We define $\mathbf{T}$ to be \textit{%
invertible} in case its associated Koszul complex $K(\mathbf{T,}\mathcal{H})$ is
exact. \ Thus, we can define the Taylor spectrum $\sigma _{T}(\mathbf{T})$
of $\mathbf{T}$ as follows:\
\begin{equation*}
\begin{tabular}{l}
$\sigma _{T}(\mathbf{T})$ \\
$:=\left\{ (\lambda _{1},\cdots ,\lambda _{n})\in \mathbb{C}^{n}:K\left(
\left( T_{1}-\lambda _{1},\ldots ,T_{n}-\lambda _{n}\right) \mathbf{,}\text{
}\mathcal{H}\right) \text{ is not invertible}\right\} $.%
\end{tabular}%
\end{equation*}
\ J. L. Taylor showed that, if $\mathcal{H} \neq \{0\}$, then $\sigma _{T}(\mathbf{T})$ is a nonempty, compact subset of the
polydisc of multiradius $r(\mathbf{T}):=(r(T_{1}),\cdots ,r(T_{n})),$ where $%
r(T_{i})$ is the spectral radius of $T_{i}$ \; ($i=1,\cdots,n$) (\cite{Tay1}, \cite{Tay2}). \ For additional facts about this notion of joint spectrum, the reader is referred to \cite{Cu1}, \cite{Appl} and \cite{Cu3}.

\bigskip


\section{\label{Sect3}The Toral Aluthge Transform}

We will now gather several well known auxiliary results which are needed for the
proofs of the main results of this section. \ We begin with a criterion for the $k$%
-hyponormality $\left( k\geq 1\right) $ of $2$-variable weighted shifts. \ But first we need to describe concretely the toral Aluthge transform of a $2$-variable weighted shift, and the necessary and sufficient condition to guarantee its commutativity.

\begin{lemma} \label{CartAlu}
Let $W_{(\alpha ,\beta )} \equiv \left(T_{1},T_{2}\right)$ be a $2$-variable weighted shift. \ Then
$$
\widetilde{T}_{1}e_{\mathbf{k}}=\sqrt{\alpha_{\mathbf{k}}\alpha_{\mathbf{k}+\mathbf{\varepsilon}_{1}}}e_{\mathbf{k}+\mathbf{\varepsilon}_{1}}
$$
and
$$
\widetilde{T}_{2}e_{\mathbf{k}}=\sqrt{\beta_{\mathbf{k}}\beta_{\mathbf{k}+\mathbf{\varepsilon}_{2}}}e_{\mathbf{k}+\mathbf{\varepsilon}_{2}}
$$
for all $\mathbf{k} \in \mathbb{Z}_+^2$.
\end{lemma}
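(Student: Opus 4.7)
The plan is to apply the definition (\ref{Def-Alu}) of the $1$-variable Aluthge transform separately to $T_1$ and to $T_2$, by first computing their polar decompositions explicitly on the canonical orthonormal basis $\{e_{\mathbf{k}}\}_{\mathbf{k}\in\mathbb{Z}_+^2}$. By symmetry between the roles of the two variables, it suffices to carry out the calculation for $\widetilde{T}_1$; the formula for $\widetilde{T}_2$ then follows by swapping $(\alpha,\varepsilon_1)$ for $(\beta,\varepsilon_2)$.

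First, I would note that since $T_1 e_{\mathbf{k}}=\alpha_{\mathbf{k}}e_{\mathbf{k}+\varepsilon_1}$, the adjoint satisfies $T_1^{\ast}e_{\mathbf{k}+\varepsilon_1}=\alpha_{\mathbf{k}}e_{\mathbf{k}}$ (and $T_1^{\ast}e_{\mathbf{k}}=0$ whenever $k_1=0$). Consequently $T_1^{\ast}T_1e_{\mathbf{k}}=\alpha_{\mathbf{k}}^{2}e_{\mathbf{k}}$, so $|T_1|$ is the diagonal operator $|T_1|e_{\mathbf{k}}=\alpha_{\mathbf{k}}e_{\mathbf{k}}$, and hence $|T_1|^{1/2}e_{\mathbf{k}}=\sqrt{\alpha_{\mathbf{k}}}\,e_{\mathbf{k}}$.

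Next, I would identify the partial isometry $V_1$ in the polar decomposition $T_1=V_1|T_1|$. Since the weights $\alpha_{\mathbf{k}}$ are strictly positive, $\operatorname{Ran}|T_1|$ is dense and the identity $T_1=V_1|T_1|$ forces $V_1 e_{\mathbf{k}}=e_{\mathbf{k}+\varepsilon_1}$ for all $\mathbf{k}\in\mathbb{Z}_+^2$. (One checks readily that this $V_1$ is a partial isometry with the required kernel condition.) Composing the three factors yields
\begin{equation*}
\widetilde{T}_1 e_{\mathbf{k}}
=|T_1|^{1/2}V_1|T_1|^{1/2}e_{\mathbf{k}}
=\sqrt{\alpha_{\mathbf{k}}}\,|T_1|^{1/2}e_{\mathbf{k}+\varepsilon_1}
=\sqrt{\alpha_{\mathbf{k}}\alpha_{\mathbf{k}+\varepsilon_1}}\,e_{\mathbf{k}+\varepsilon_1},
\end{equation*}
which is exactly the claimed formula. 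The analogous computation, with $T_2^{\ast}T_2 e_{\mathbf{k}}=\beta_{\mathbf{k}}^{2}e_{\mathbf{k}}$ and $V_2 e_{\mathbf{k}}=e_{\mathbf{k}+\varepsilon_2}$, gives the formula for $\widetilde{T}_2$.

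There is no real obstacle here: the entire argument is a bookkeeping exercise once one writes out the polar decomposition diagonally on the basis. The only point that requires a moment's care is the well-definedness of $V_1$ and $V_2$ as partial isometries, which is immediate from the standing assumption that the weights are strictly positive (so that $|T_i|$ has dense range and $V_i$ is in fact an isometry on all of $\ell^2(\mathbb{Z}_+^2)$).
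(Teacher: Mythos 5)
Your proposal is correct and fills in exactly the computation the paper dismisses as ``straightforward from (\ref{Def-Alu})'': identifying $|T_i|$ as the diagonal operator with entries $\alpha_{\mathbf{k}}$ (resp. $\beta_{\mathbf{k}}$) and $V_i$ as the corresponding unweighted shift, then composing the three factors. No discrepancies with the paper's (implicit) argument.
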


\begin{proof}
Straightforward from (\ref{Def-Alu}).
\end{proof}

In the following result we prove that the commutativity of $\left( \widetilde{T}_{1},%
\widetilde{T}_{2}\right)$ requires a condition on the weight sequences.

\subsection{Commutativity of the toral Aluthge transform}

\begin{proposition}
\label{commuting1} Let $W_{(\alpha ,\beta )}$ be a commuting $2$-variable weighted shift, with weight diagram given
by Figure \ref{Figure 1}(i). \ Then
\begin{eqnarray}
\widetilde{W}_{(\alpha ,\beta )} &\equiv &\left( \widetilde{T}_{1},%
\widetilde{T}_{2}\right) \text{ is commuting} \nonumber \\
&\Longleftrightarrow &\alpha _{\mathbf{k+}\varepsilon _{2}}\alpha _{\mathbf{k}+\varepsilon _{1}+\varepsilon _{2}}=\alpha_{\mathbf{k+}\varepsilon _{1}}\alpha _{\mathbf{k}+2\varepsilon _{2}}\label{prop1eq}
\end{eqnarray}
for all $\mathbf{k} \in \mathbb{Z}_+^2$.
\end{proposition}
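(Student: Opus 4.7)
The plan is to apply Lemma~\ref{CartAlu} directly. Since $\widetilde{T}_1$ and $\widetilde{T}_2$ are themselves $2$-variable weighted shifts (with new weights $\sqrt{\alpha_{\mathbf{k}}\alpha_{\mathbf{k}+\varepsilon_1}}$ and $\sqrt{\beta_{\mathbf{k}}\beta_{\mathbf{k}+\varepsilon_2}}$, respectively), the commutativity criterion (\ref{commuting}) applied to $\widetilde{W}_{(\alpha,\beta)}$ translates to requiring
\begin{equation*}
\sqrt{\alpha_{\mathbf{k}+\varepsilon_2}\alpha_{\mathbf{k}+\varepsilon_1+\varepsilon_2}}\,\sqrt{\beta_{\mathbf{k}}\beta_{\mathbf{k}+\varepsilon_2}}
=\sqrt{\alpha_{\mathbf{k}}\alpha_{\mathbf{k}+\varepsilon_1}}\,\sqrt{\beta_{\mathbf{k}+\varepsilon_1}\beta_{\mathbf{k}+\varepsilon_1+\varepsilon_2}},
\end{equation*}
for every $\mathbf{k}\in\mathbb{Z}_+^2$. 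Squaring both sides, this becomes the single polynomial identity
\begin{equation*}
\alpha_{\mathbf{k}+\varepsilon_2}\alpha_{\mathbf{k}+\varepsilon_1+\varepsilon_2}\,\beta_{\mathbf{k}}\beta_{\mathbf{k}+\varepsilon_2}
=\alpha_{\mathbf{k}}\alpha_{\mathbf{k}+\varepsilon_1}\,\beta_{\mathbf{k}+\varepsilon_1}\beta_{\mathbf{k}+\varepsilon_1+\varepsilon_2}.
\end{equation*}

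Next I would use the commutativity of $W_{(\alpha,\beta)}$ to eliminate the $\beta$-weights. Specifically, (\ref{commuting}) at $\mathbf{k}$ gives $\alpha_{\mathbf{k}+\varepsilon_2}\beta_{\mathbf{k}}=\alpha_{\mathbf{k}}\beta_{\mathbf{k}+\varepsilon_1}$, and at $\mathbf{k}+\varepsilon_2$ it gives $\beta_{\mathbf{k}+\varepsilon_1+\varepsilon_2}\alpha_{\mathbf{k}+\varepsilon_2}=\alpha_{\mathbf{k}+2\varepsilon_2}\beta_{\mathbf{k}+\varepsilon_2}$. Using the first relation to rewrite $\alpha_{\mathbf{k}+\varepsilon_2}\beta_{\mathbf{k}}$ on the left, and the second to rewrite $\beta_{\mathbf{k}+\varepsilon_1+\varepsilon_2}$ on the right (after multiplying and dividing by $\alpha_{\mathbf{k}+\varepsilon_2}$), the displayed identity collapses, after cancellation of the positive common factor $\alpha_{\mathbf{k}}\beta_{\mathbf{k}+\varepsilon_1}\beta_{\mathbf{k}+\varepsilon_2}$, to
\begin{equation*}
\alpha_{\mathbf{k}+\varepsilon_1+\varepsilon_2}\,\alpha_{\mathbf{k}+\varepsilon_2}
=\alpha_{\mathbf{k}+\varepsilon_1}\,\alpha_{\mathbf{k}+2\varepsilon_2},
\end{equation*}
which is exactly (\ref{prop1eq}). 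Both directions are thus obtained simultaneously, since every reduction step is an equivalence between positive quantities.

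This is essentially a bookkeeping exercise rather than a genuine obstacle; the only subtlety is to apply the hypothesis (\ref{commuting}) at the two correct lattice points ($\mathbf{k}$ and $\mathbf{k}+\varepsilon_2$) so that the $\beta$-factors disappear cleanly, leaving a pure condition on the $\alpha$-weights. The positivity of all weights justifies both the squaring step and the final cancellation, so no loss of generality occurs.
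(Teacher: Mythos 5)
Your argument is correct and is essentially the paper's own proof: the authors likewise compute the weights of $\widetilde{T}_1\widetilde{T}_2$ and $\widetilde{T}_2\widetilde{T}_1$ via Lemma \ref{CartAlu} and then apply the commutativity relation (\ref{commuting}) at exactly the two lattice points $\mathbf{k}$ and $\mathbf{k}+\varepsilon_2$ to reduce the condition to (\ref{prop1eq}). The only cosmetic difference is that the paper carries out the substitutions under the square root sign rather than squaring first, which changes nothing since all weights are positive.
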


\begin{proof}
Let $\mathbf{k} \in \mathbb{Z}_+^2$; by Lemma \ref{CartAlu},
\begin{eqnarray}
\widetilde{T}_{2}\widetilde{T}_{1}e_{\mathbf{k}}&=&\sqrt{\alpha_{\mathbf{k}}\alpha_{\mathbf{k}+\varepsilon_{1}}\beta_{\mathbf{k}+\varepsilon_{1}}\beta_{\mathbf{k}+\varepsilon_{1}+\varepsilon_{2}}}e_{\mathbf{k}+\varepsilon_{1}+\varepsilon_{2}} \nonumber \\
&=&\sqrt{(\alpha_{\mathbf{k}}\beta_{\mathbf{k}+\varepsilon_{1}})\alpha_{\mathbf{k}+\varepsilon_{1}}\beta_{\mathbf{k}+\varepsilon_{1}+\varepsilon_{2}}}e_{\mathbf{k}+\varepsilon_{1}+\varepsilon_{2}} \nonumber \\
&=&\sqrt{(\beta_{\mathbf{k}}\alpha_{\mathbf{k}+\varepsilon_{2}})\alpha_{\mathbf{k}+\varepsilon_{1}}\beta_{\mathbf{k}+\varepsilon_{1}+\varepsilon_{2}}}e_{\mathbf{k}+\varepsilon_{1}+\varepsilon_{2}} \; \; (\textrm{by } \ref{commuting}) \nonumber \\
&=&\sqrt{\beta_{\mathbf{k}}\alpha_{\mathbf{k}+\varepsilon_{1}}(\beta_{\mathbf{k}+\varepsilon_{2}}\alpha_{\mathbf{k}+2\varepsilon_{2}})}e_{\mathbf{k}+\varepsilon_{1}+\varepsilon_{2}} \; \; (\textrm{again by } \ref{commuting}) \nonumber \\
&=&\sqrt{\beta_{\mathbf{k}}\beta_{\mathbf{k}+\varepsilon_{2}}(\alpha_{\mathbf{k}+\varepsilon_{1}}\alpha_{\mathbf{k}+2\varepsilon_{2}}})e_{\mathbf{k}+\varepsilon_{1}+\varepsilon_{2}}. \label{eqq1}
\end{eqnarray}
On the other hand,
\begin{eqnarray}
\widetilde{T}_{1}\widetilde{T}_{2}e_{\mathbf{k}}&=&\sqrt{\beta_{\mathbf{k}}\beta_{\mathbf{k}+\varepsilon_{2}}\alpha_{\mathbf{k}+\varepsilon_{2}}\alpha_{\mathbf{k}+\varepsilon_{1}+\varepsilon_{2}}}e_{\mathbf{k}+\varepsilon_{1}+\varepsilon_{2}}.  \label{eqq2}
\end{eqnarray}
From (\ref{eqq1}) and (\ref{eqq2}) it follows that $\widetilde{T}_{1}\widetilde{T}_{2}=\widetilde{T}_{2}\widetilde{T}_{1}$ if and only if 
$$
\alpha _{\mathbf{k+}\varepsilon _{2}}\alpha _{\mathbf{k}+\varepsilon _{1}+\varepsilon _{2}}=\alpha_{\mathbf{k+}\varepsilon _{1}}\alpha _{\mathbf{k}+2\varepsilon _{2}},
$$
as desired.
\end{proof}

\begin{remark}
\label{Re 1} By Proposition \ref{commuting1} and the commutativity condition for $W_{(\alpha ,\beta )}$, it is straightforward to prove that (\ref{prop1eq}) is equivalent to
\begin{equation}
\beta _{\mathbf{k+}\varepsilon _{1}}\beta _{\mathbf{k}+\varepsilon _{1}+\varepsilon _{2}}=\beta_{\mathbf{k+}\varepsilon _{2}}\beta _{\mathbf{k}+2\varepsilon _{1}}.
\label{equ3}
\end{equation}
for all $\mathbf{k} \in \mathbb{Z}_+^2$. \; \qed
\end{remark}

\begin{lemma}
(\cite{CLY1})\label{khypo} Let $W_{(\alpha ,\beta )}$ be a commuting $2$-variable weighted shift. \ Then the following are equivalent:\newline
(i) $\ W_{(\alpha ,\beta )}$ is $k$-hyponormal;\newline
(ii) $\ M_{\mathbf{u}}(k)\left( W_{(\alpha ,\beta )}\right) :=\left( \gamma
_{\mathbf{u}+(n,m)+(p,q)}\left( W_{(\alpha ,\beta )}\right) \right)
_{_{0\leq p+q\leq k}^{0\leq n+m\leq k}}\geq 0$ for all $\mathbf{u}\in
\mathbb{Z}_{+}^{2}$.
\end{lemma}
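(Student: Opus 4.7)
The plan is to reduce the joint $k$-hyponormality of $W_{(\alpha,\beta)}$, which by definition is the positivity of the operator matrix $[\mathbf{T}(k)^{\ast}, \mathbf{T}(k)]$ on $\ell^{2}(\mathbb{Z}_{+}^{2})^{\oplus N}$, to a family of positivity conditions on finite-dimensional matrices which are shown, after rescaling, to be equivalent to $M_{\mathbf{u}}(k)\ge 0$.  This mirrors the strategy of Curto--Lee--Yoon \cite{CLY1}, which itself extends Curto's Hankel-matrix characterization of $k$-hyponormality for $1$-variable weighted shifts.

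The first step is to compute the commutator entries $[(T_{2}^{q}T_{1}^{p})^{\ast}, T_{2}^{m}T_{1}^{n}]$ explicitly.  Because $W_{(\alpha,\beta)}$ commutes, the moments $\gamma_{\mathbf{v}}$ are path-independent, and a direct extension of Lemma \ref{CartAlu} gives $T_{2}^{m}T_{1}^{n}e_{\mathbf{v}} = \sqrt{\gamma_{\mathbf{v}+(n,m)}/\gamma_{\mathbf{v}}}\,e_{\mathbf{v}+(n,m)}$; hence each commutator sends $e_{\mathbf{v}}$ to a scalar multiple of $e_{\mathbf{v}+(n-p,m-q)}$.  It follows that $[\mathbf{T}(k)^{\ast}, \mathbf{T}(k)]$ leaves invariant, and orthogonally decomposes through, the finite-dimensional subspaces $\mathcal{K}_{\mathbf{w}} := \{(c_{(n,m)}e_{\mathbf{w}-(n,m)})_{(n,m)}: c_{(n,m)} \in \mathbb{C}\}$ indexed by $\mathbf{w} \in \mathbb{Z}_{+}^{2}$.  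Thus joint $k$-hyponormality is equivalent to the positivity of each finite-dimensional restriction $X_{\mathbf{w}} := [\mathbf{T}(k)^{\ast}, \mathbf{T}(k)]|_{\mathcal{K}_{\mathbf{w}}}$ for every $\mathbf{w}$ large enough that $\mathbf{w}\ge (n,m)+(p,q)$ in the relevant range.

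The second and main step is to identify the family $\{X_{\mathbf{w}}\ge 0\}_{\mathbf{w}}$ with the family $\{M_{\mathbf{u}}(k)\ge 0\}_{\mathbf{u}}$.  After a positive diagonal congruence by $\mathrm{diag}(\sqrt{\gamma_{\mathbf{w}-(n,m)}})$, the condition $X_{\mathbf{w}}\ge 0$ becomes the positivity of the quadratic form
\[
\sum d_{(n,m)}\overline{d_{(p,q)}}\,\Bigl[\gamma_{\mathbf{w}} - \frac{\gamma_{\mathbf{w}-(n,m)}\,\gamma_{\mathbf{w}-(p,q)}}{\gamma_{\mathbf{w}-(n,m)-(p,q)}}\Bigr],
\]
while the Schur complement of $M_{\mathbf{u}}(k)$ relative to its $(0,0)$-corner has entries $\gamma_{\mathbf{u}+(n,m)+(p,q)} - \gamma_{\mathbf{u}+(n,m)}\,\gamma_{\mathbf{u}+(p,q)}/\gamma_{\mathbf{u}}$.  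I would then match the two families by a reindexing argument: the commutativity relation (\ref{commuting}) and path-independence of moments ensure that the collections of positive semidefiniteness conditions obtained by letting $\mathbf{w}$ and $\mathbf{u}$ range over all of $\mathbb{Z}_{+}^{2}$ describe exactly the same constraint on the moment sequence $\{\gamma_{\mathbf{k}}\}$.

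The main obstacle is precisely this last identification: for any individual pair $(\mathbf{u},\mathbf{w})$ the two matrices are not congruent, so the equivalence cannot be read off from a single algebraic manipulation but must be extracted jointly across all base points, noting also that $\gamma_{\mathbf{u}}\ge 0$ is automatic.  The careful bookkeeping needed to execute this cross-identification is carried out in \cite{CLY1}.
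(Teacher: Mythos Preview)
The paper does not prove this lemma; it is quoted from \cite{CLY1} and used as a black box, so there is no proof in the paper to compare against. Your outline follows the standard route of that reference---reduce the operator-matrix positivity to a family of scalar-matrix conditions via the diagonal action of the commutators---and that strategy is correct.

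However, your invariant subspaces $\mathcal{K}_{\mathbf{w}}$ are misidentified, and this is exactly what manufactures the spurious ``obstacle'' in your last paragraph. The entry $[(T_{2}^{q}T_{1}^{p})^{\ast},T_{2}^{m}T_{1}^{n}]$ maps $e_{\mathbf{v}}$ to a multiple of $e_{\mathbf{v}+(n,m)-(p,q)}$; if you place $e_{\mathbf{w}-(p,q)}$ in slot $(p,q)$ you land in $e_{\mathbf{w}+(n,m)-2(p,q)}$, which is \emph{not} $e_{\mathbf{w}-(n,m)}$ unless $(n,m)=(p,q)$. Thus $\mathcal{K}_{\mathbf{w}}$ is not invariant and your $X_{\mathbf{w}}$ is not the restriction you claim. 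The correct choice is $f_{(p,q)}=c_{(p,q)}\,e_{\mathbf{u}+(p,q)}$ for a fixed $\mathbf{u}$: then the commutator entry sends it to a multiple of $e_{\mathbf{u}+(n,m)}$, the subspace \emph{is} invariant, and the restricted matrix has entries
\[
\frac{\gamma_{\mathbf{u}+(n,m)+(p,q)}}{\sqrt{\gamma_{\mathbf{u}+(n,m)}\,\gamma_{\mathbf{u}+(p,q)}}}
\;-\;\frac{\sqrt{\gamma_{\mathbf{u}+(n,m)}\,\gamma_{\mathbf{u}+(p,q)}}}{\gamma_{\mathbf{u}}}.
\]
After the diagonal congruence by $\mathrm{diag}\bigl(\sqrt{\gamma_{\mathbf{u}+(n,m)}}\bigr)$ this is precisely the Schur complement of $M_{\mathbf{u}}(k)$ in its $(0,0)$-corner $\gamma_{\mathbf{u}}>0$. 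Hence for each fixed $\mathbf{u}$ the two positivity conditions match directly, and no cross-identification over different base points is needed at all.
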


We recall that $\mathcal{M}_{i}$ $\left( \text{resp. }\mathcal{N}_{j}\right)
$ is the subspace of $\ell ^{2}(\mathbb{Z}_{+}^{2})$ spanned by the
canonical orthonormal basis associated to indices $\mathbf{k}=(k_{1},k_{2})$
with $k_{1}\geq 0$ and $k_{2}\geq i$ (resp. $k_{1}\geq j$ 
\newline and $k_{2}\geq 0$). \ For simplicity, we write $\mathcal{M}=\mathcal{M}_1$ and $\mathcal{N}=\mathcal{N}_1$. \ The \textit{core} $c(W_{(\alpha ,\beta )})$ of $%
W_{(\alpha ,\beta )}$ is the restriction of $W_{(\alpha ,\beta )}$ to the
invariant subspace $\mathcal{M}\cap \mathcal{N}$. \ A $2$-variable weighted
shift $W_{(\alpha ,\beta )}$ is said to be of\textit{\ tensor form} if it is
of the form $(I\otimes W_{\sigma },W_{\tau }\otimes I)$ for suitable $1$-variable weight sequences $\sigma$ and $\tau$. \ We also let 
$$
\mathcal{TC}:=\{W_{(\alpha ,\beta )}: c(W_{(\alpha ,\beta )}) \textrm{ is of tensor form} \}.
$$

\begin{proposition}
\label{propscaling} (cf. \cite{KimYoon}) \ Let $W_{(\alpha ,\beta
)}\equiv \left( T_{1},T_{2}\right)$ be a commuting $2$-variable weighted shift. \ Then, for $a,b>0$ and $k \ge 1$
we have%
\begin{equation*}
\left( T_{1},T_{2}\right) \text{ is } k \text{-hyponormal} \Longleftrightarrow \left(
aT_{1},bT_{2}\right) \text{ is } k \text{-hyponormal}.
\end{equation*}
\end{proposition}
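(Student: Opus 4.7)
The plan is to invoke Lemma \ref{khypo}, which characterizes $k$-hyponormality of a commuting $2$-variable weighted shift by positive semidefiniteness of the moment matrices $M_{\mathbf{u}}(k)$ for every $\mathbf{u}\in\mathbb{Z}_+^2$, and to show that the scaling $(T_1,T_2)\mapsto(aT_1,bT_2)$ acts on each such moment matrix by a positive diagonal congruence, hence preserving positive semidefiniteness in both directions. Commutativity is preserved trivially, since $(aT_1)(bT_2)=ab\,T_1T_2=ab\,T_2T_1=(bT_2)(aT_1)$, so $k$-hyponormality is meaningful for both pairs.

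The weight sequences of $(aT_1,bT_2)$ are $(a\alpha_{\mathbf{k}},b\beta_{\mathbf{k}})$, and from the defining formula for $\gamma_{\mathbf{k}}$ (applied case by case to $k_1=0$, $k_2=0$, and $k_1,k_2\ge 1$) one reads off $\gamma_{\mathbf{k}}(aT_1,bT_2)=a^{2k_1}b^{2k_2}\gamma_{\mathbf{k}}(W_{(\alpha,\beta)})$. Substituting $\mathbf{k}=\mathbf{u}+(n,m)+(p,q)$, the $((n,m),(p,q))$ entry of $M_{\mathbf{u}}(k)(aT_1,bT_2)$ equals $a^{2(u_1+n+p)}b^{2(u_2+m+q)}$ times the corresponding entry of $M_{\mathbf{u}}(k)(W_{(\alpha,\beta)})$.

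I would then define the positive diagonal matrix $D$, indexed by pairs $(n,m)$ with $0\le n+m\le k$, by $D_{(n,m),(n,m)}:=a^{n}b^{m}$, and rewrite the previous identity as
\[
M_{\mathbf{u}}(k)(aT_1,bT_2)\;=\;a^{2u_1}b^{2u_2}\,D^{2}\,M_{\mathbf{u}}(k)(W_{(\alpha,\beta)})\,D^{2}.
\]
Since $D^{2}$ is positive definite and invertible and $a^{2u_1}b^{2u_2}>0$, the left-hand side is positive semidefinite if and only if $M_{\mathbf{u}}(k)(W_{(\alpha,\beta)})$ is. Applying Lemma \ref{khypo} for every $\mathbf{u}\in\mathbb{Z}_+^2$ yields the equivalence. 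The only point requiring any care is the bookkeeping with the exponents in the moment formula; once the resulting identity is recognized as a diagonal congruence, the conclusion is immediate, so I do not anticipate a substantive obstacle.
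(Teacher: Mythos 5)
Your proof is correct. The paper does not supply its own argument for this proposition (it simply cites \cite{KimYoon}), but your route --- computing $\gamma_{\mathbf{k}}(aT_1,bT_2)=a^{2k_1}b^{2k_2}\gamma_{\mathbf{k}}(W_{(\alpha,\beta)})$ from the moment formula and recognizing $M_{\mathbf{u}}(k)(aT_1,bT_2)$ as a positive scalar times a diagonal congruence of $M_{\mathbf{u}}(k)(W_{(\alpha,\beta)})$, then invoking Lemma \ref{khypo} --- is exactly the natural argument, and every step (including the preservation of commutativity needed to apply the lemma) checks out.
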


\subsection{Hyponormality is not preserved under the toral Aluthge transform} \label{subsec31} \ As we observed in the Introduction, the $1$-variable Aluthge transform leaves the class of hyponormal weighted shifts invariant. \ In this Subsection we will show that the same is not true of the toral Aluthge transform acting on $2$-variable weighted shifts. \ To see this, consider the commuting $2$-variable weighted shift $W_{(\alpha ,\beta )}$ given by Figure \ref{Figure 2ex}(ii). \ That is, $W_{(\alpha ,\beta )}$ has a symmetric weight diagram, has a core of tensor form (with Berger measure $\xi \times \xi$), and with zero-th and first rows given by backward extensions of a weighted shift whose Berger measure is $\xi$; we will denote those backward extensions by $[x_0,\xi]$ and $[a,\xi]$, respectively. \ Also, we denote by $\omega_0,\omega_1,\cdots$ the weight sequence associated with $\xi$. \ Since we wish to characterize the subnormality of $W_{(\alpha ,\beta )}$, we assume that $[x_0,\xi]$ and $[a,\xi]$ are subnormal, which requires that $\frac{1}{s} \in L^1(\xi)$. \ Let $\rho:=\int{\frac{1}{s} d\xi(s)}<\infty$. \ We recall the following result from \cite{CuYo1}.

\setlength{\unitlength}{1mm} \psset{unit=1mm}
\begin{figure}[th]
\begin{center}
\begin{picture}(120,40)

\pspolygon*[linecolor=lightgray](25,16)(57,16)(57,38)(25,38)

\psline{->}(10,6)(58,6)
\psline(10,16)(57,16)
\psline(10,26)(57,26)
\psline(10,36)(57,36)
\psline{->}(10,6)(10,38)
\psline(25,6)(25,37)
\psline(40,6)(40,37)
\psline(55,6)(55,37)

\put(2.3,3.2){\footnotesize{$(0,0)$}}
\put(9,-2){$(i)$}
\put(21,3){\footnotesize{$(1,0)$}}
\put(36,3){\footnotesize{$(2,0)$}}
\put(51,3){\footnotesize{$(3,0)$}}

\put(14,7){\footnotesize{$x_{0}$}}
\put(29,7){\footnotesize{$x_{1}$}}
\put(44,7){\footnotesize{$x_2=\omega_{1}$}}
\put(56,7){\footnotesize{$\omega_{2}$}}

\put(14,17){\footnotesize{$a$}}
\put(29,17){\footnotesize{$\omega_{0}$}}
\put(44,17){\footnotesize{$\omega_{1}$}}
\put(56,17){\footnotesize{$\omega_{2}$}}

\put(14,28){\footnotesize{$a \frac{\omega_{0}}{x_{1}}$}}
\put(29,27){\footnotesize{$\omega_{0}$}}
\put(44,27){\footnotesize{$\omega_{1}$}}
\put(56,27){\footnotesize{$\omega_{2}$}}

\put(15,38){\footnotesize{$a \frac{\omega_{0}}{x_{1}}$}}
\put(30,37){\footnotesize{$\omega_{0}$}}
\put(45,37){\footnotesize{$\omega_{1}$}}
\put(56,37){\footnotesize{$\omega_{2}$}}

\psline{->}(25,1)(40,1)
\put(30,-2.5){$\rm{T}_1$}
\psline{->}(2, 15)(2,30)
\put(-3,20){$\rm{T}_2$}

\put(2.7,15){\footnotesize{$(0,1)$}}
\put(2.7,25){\footnotesize{$(0,2)$}}
\put(2.7,35){\footnotesize{$(0,3)$}}

\put(10,11){\footnotesize{$y_{0}$}}
\put(10,20){\footnotesize{$y_{1}$}}
\put(10,31){\footnotesize{$y_2=\tau_{1}$}}
\put(11,37){\footnotesize{$\vdots$}}

\put(25,11){\footnotesize{$a \frac{y_{0}}{x_{0}}$}}
\put(25,20){\footnotesize{$\tau_{0}$}}
\put(25,31){\footnotesize{$\tau_{1}$}}
\put(26,37){\footnotesize{$\vdots$}}

\put(40,11){\footnotesize{$a \frac{y_0 \omega_{0}}{x_0 x_{1}}$}}
\put(40,20){\footnotesize{$\tau_{0}$}}
\put(40,31){\footnotesize{$\tau_{1}$}}
\put(41,37){\footnotesize{$\vdots$}}

\pspolygon*[linecolor=lightgray](67,16)(114,16)(114,38)(67,38)
\pspolygon*[linecolor=lightgray](82,6)(114,6)(114,38)(82,38)

\psline{->}(67,6)(115,6)
\psline(67,16)(114,16)
\psline(67,26)(115,26)
\psline(67,36)(114,36)
\psline{->}(67,6)(67,38)
\psline(82,6)(82,37)
\psline(97,6)(97,37)
\psline(112,6)(112,37)

\put(65,3.2){\footnotesize{$(0,0)$}}
\put(67.2,-2){$(ii)$}
\put(78,3){\footnotesize{$(1,0)$}}
\put(93,3){\footnotesize{$(2,0)$}}
\put(108,3){\footnotesize{$(3,0)$}}

\put(73,7){\footnotesize{$x_{0}$}}
\put(87,7){\footnotesize{$\omega_{0}$}}
\put(101,7){\footnotesize{$\omega_{1}$}}
\put(113,7){\footnotesize{$\cdots$}}

\put(73,17){\footnotesize{$a$}}
\put(87,17){\footnotesize{$\omega_{0}$}}
\put(101,17){\footnotesize{$\omega_{1}$}}
\put(113,17){\footnotesize{$\cdots$}}

\put(73,28){\footnotesize{$a$}}
\put(87,27){\footnotesize{$\omega_{0}$}}
\put(101,27){\footnotesize{$\omega_{1}$}}
\put(113,27){\footnotesize{$\cdots$}}

\put(73,36.5){\footnotesize{$\cdots$}}
\put(87,36.5){\footnotesize{$\cdots$}}
\put(102,36.5){\footnotesize{$\cdots$}}

\psline{->}(83,1)(98,1)
\put(88,-2.5){$\rm{T}_1$}

\put(67,10){\footnotesize{$x_{0}$}}
\put(67,20){\footnotesize{$\omega_{0}$}}
\put(67,30){\footnotesize{$\omega_{1}$}}
\put(68,37){\footnotesize{$\vdots$}}

\put(82,11){\footnotesize{$a$}}
\put(82,20){\footnotesize{$\omega_{0}$}}
\put(82,30){\footnotesize{$\omega_{1}$}}
\put(83,37){\footnotesize{$\vdots$}}

\put(97,11){\footnotesize{$a$}}
\put(97,20){\footnotesize{$\omega_{0}$}}
\put(97,30){\footnotesize{$\omega_{1}$}}
\put(98,37){\footnotesize{$\vdots$}}


\end{picture}
\end{center}
\caption{Weight diagram of a 2-variable weighted shift $W_{(\protect\alpha,%
\protect\beta)}\equiv(T_{1},T_{2})$ with core of tensor form and with
commutative toral transform. \ Observe that $x_2=\protect\omega_1$, $x_3=%
\protect\omega_2$, $\cdots$, $y_2=\protect\tau_1$, $y_3=\protect\tau_2$, $%
\cdots$, and $\protect\tau_0 x_1 = \protect\omega_0 y_1$ all follow from (%
\protect\ref{prop1eq}). Weight diagram of the 2-variable weighted shift $W_{(%
\protect\alpha,\protect\beta)}\equiv(T_{1},T_{2})$ in Subsection \protect\ref%
{subsec31}.}
\label{Figure 2ex}
\end{figure}
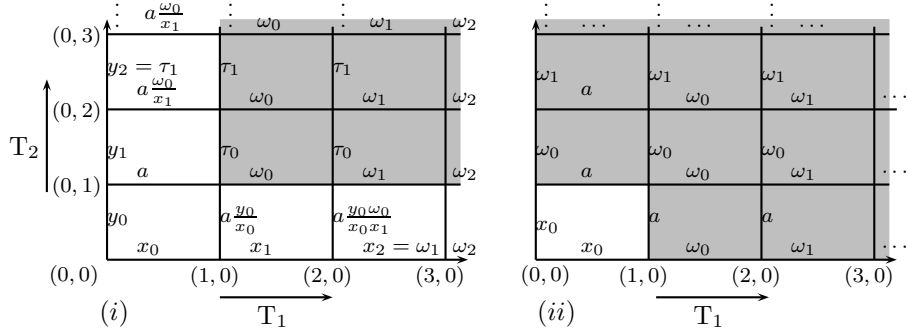

\begin{lemma}
\label{backwardext}  \ (Subnormal backward extension of a $1$-variable weighted
shift \cite[Proposition 1.5]{CuYo1}) \ Let $T$ be a weighted shift whose restriction $T_{%
\mathcal{L}}$ to $\mathcal{L}=\vee \{e_{1},e_{2},\cdots \}$ is subnormal,
with associated measure $\mu _{\mathcal{L}}.$ \ Then $T$ is subnormal (with
associated measure $\mu $) if and only if\newline
(i) $\ \frac{1}{t}\in L^{1}(\mu _{\mathcal{L}})$\newline
(ii) $\ \alpha _{0}^{2}\leq (\left\| \frac{1}{t}\right\| _{L^{1}(\mu _{%
\mathcal{L}})})^{-1}$\newline
In this case, $d\mu (t)=\frac{\alpha _{0}^{2}}{t}d\mu _{\mathcal{L}%
}(t)+(1-\alpha _{0}^{2}\left\| \frac{1}{t}\right\| _{L^{1}(\mu _{\mathcal{L}%
})})d\delta _{0}(t)$. \ In
particular, $T$ is never subnormal when $\mu _{\mathcal{L}}(\{0\})>0$. \
\end{lemma}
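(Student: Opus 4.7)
The plan is to invoke Berger's theorem to convert the subnormality of $T$ (and of $T_{\mathcal{L}}$) into a statement about their moment sequences, and then to relate the two sequences via the simple identity
\[
\gamma_{k}(T) \;=\; \alpha_0^{2}\,\gamma_{k-1}(T_{\mathcal{L}})\qquad (k\ge 1),
\]
while $\gamma_0(T)=1$. Under this translation, the question of whether $T$ is subnormal becomes the question of whether the sequence $1,\alpha_0^{2},\alpha_0^{2}\gamma_{1}(T_{\mathcal{L}}),\alpha_0^{2}\gamma_{2}(T_{\mathcal{L}}),\ldots$ admits a representing probability measure on $[0,\Vert T\Vert^{2}]$.

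For the \emph{sufficiency} direction, assume (i) and (ii) hold, and define
\[
d\mu(t)\;:=\;\frac{\alpha_0^{2}}{t}\,d\mu_{\mathcal{L}}(t)\;+\;\Bigl(1-\alpha_0^{2}\bigl\Vert\tfrac{1}{t}\bigr\Vert_{L^{1}(\mu_{\mathcal{L}})}\Bigr)\,d\delta_{0}(t).
\]
First I would check that this is a probability measure: positivity of the absolutely continuous part follows because $1/t\in L^{1}(\mu_{\mathcal{L}})$ forces $\mu_{\mathcal{L}}(\{0\})=0$, and positivity of the point mass at $0$ is precisely condition (ii); the total mass is $1$ by inspection. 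Next, for $k\ge 1$ the $\delta_{0}$ term contributes nothing, and
\[
\int t^{k}\,d\mu(t)\;=\;\alpha_0^{2}\int t^{k-1}\,d\mu_{\mathcal{L}}(t)\;=\;\alpha_0^{2}\,\gamma_{k-1}(T_{\mathcal{L}})\;=\;\gamma_{k}(T),
\]
while the $k=0$ case is immediate. By Berger's theorem, $T$ is subnormal with Berger measure $\mu$.

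For the \emph{necessity} direction, assume $T$ is subnormal with Berger measure $\mu$. Restricting to $\mathcal{L}$ and applying the formula $d\mu_{\mathcal{L}}(t)=\frac{t}{\alpha_0^{2}}\,d\mu(t)$ recalled in Section~\ref{Sect1}, I would read off that $\mu$ must satisfy $d\mu(t)=\frac{\alpha_0^{2}}{t}\,d\mu_{\mathcal{L}}(t)$ on $(0,\infty)$, plus possibly a point mass at $0$. Integrating gives
\[
1\;=\;\mu([0,\infty))\;=\;\alpha_0^{2}\int\frac{1}{t}\,d\mu_{\mathcal{L}}(t)\;+\;\mu(\{0\}),
\]
which forces $1/t\in L^{1}(\mu_{\mathcal{L}})$ (that is, (i)) and, since $\mu(\{0\})\ge 0$, the inequality $\alpha_0^{2}\Vert\tfrac{1}{t}\Vert_{L^{1}(\mu_{\mathcal{L}})}\le 1$ (that is, (ii)); the displayed formula for $d\mu$ also drops out. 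Finally, the last assertion follows because $\mu_{\mathcal{L}}(\{0\})>0$ makes $1/t\notin L^{1}(\mu_{\mathcal{L}})$, so (i) fails.

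The only subtle point is the necessity direction, where one must be careful to identify the singular part of $\mu$ correctly and to justify that the restriction-of-Berger-measure formula captures all of $\mu$ away from $0$; once that is handled, everything else is routine bookkeeping with moments.
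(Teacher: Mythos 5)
The paper states this lemma without proof, quoting it verbatim from \cite[Proposition 1.5]{CuYo1}; your argument is correct and is essentially the standard proof of that result. Both directions are handled soundly: the moment identity $\gamma_k(T)=\alpha_0^2\,\gamma_{k-1}(T_{\mathcal{L}})$ together with Berger's theorem gives sufficiency, and inverting the restriction formula $d\mu_{\mathcal{L}}(t)=\tfrac{t}{\alpha_0^2}\,d\mu(t)$ on $(0,\infty)$ and integrating --- so that $\alpha_0^2\int\tfrac{1}{t}\,d\mu_{\mathcal{L}}=\mu((0,\infty))\le 1$, which simultaneously yields (i), (ii), and the size of the atom at $0$ --- gives necessity.
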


Thus, by Lemma \ref{backwardext}, we must have $x_0^2 \rho \le 1$ and $a^2 \rho \le 1$.
\ For the proof of Lemma \ref{lem39}, we need to recall a few facts about $2$-variable weighted shifts.

\begin{lemma} \ (cf. \cite{CuYo2}) \newline
(i) \ Let $\mu $ and $\nu $ be two positive measures on a set $X$. \ We say
that $\mu \leq \nu $ on $X,$ if $\mu (E)\leq \nu (E)$ for all Borel subset $%
E\subseteq X$; equivalently, $\mu \leq \nu $ if and only if $\int fd\mu \leq
\int fd\nu $ for all $f\in C(X)$ such that $f\geq 0$ on $X$.\newline
(ii)\ \ Let $\mu $ be a probability measure on $X\times Y$, and assume that $%
\frac{1}{t}\in L^{1}(\mu ).$ \ The \textit{extremal measure} $\mu _{ext}$
(which is also a probability measure) on $X\times Y$ is given by $d\mu
_{ext}(s,t):=(1-\delta _{0}(t))\frac{1}{t\left\Vert \frac{1}{t}\right\Vert
_{L^{1}(\mu )}}d\mu (s,t)$. \newline
(iii) \ Given a measure $\mu $ on $X\times Y$, the \textit{marginal measure}
$\mu ^{X}$ is given by $\mu ^{X}:=\mu \circ \pi _{X}^{-1}$, where $\pi
_{X}:X\times Y\rightarrow X$ is the canonical projection onto $X$. \ Thus, $%
\mu ^{X}(E)=\mu (E\times Y)$, for every $E\subseteq X$.
\end{lemma}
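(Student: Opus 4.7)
The statement collects three pieces of background: (i) an equivalent characterization of the pointwise (measure) inequality, (ii) the construction of the extremal measure, and (iii) the definition of the marginal measure. Since (iii) is purely a definition (the pushforward under the canonical projection $\pi_X$ is automatically a positive measure and the identity $\mu^X(E)=\mu(E\times Y)$ is its defining property) and (ii) is a one-line verification, the only substantive item is (i), so the plan is to spend most of the effort there and dispatch (ii) and (iii) quickly.

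For (ii), I would simply compute
\begin{equation*}
\int_{X\times Y} d\mu_{ext}(s,t)=\frac{1}{\left\Vert \tfrac{1}{t}\right\Vert _{L^{1}(\mu )}}\int_{\{t\neq 0\}}\frac{1}{t}\,d\mu(s,t)=1,
\end{equation*}
which, together with positivity of the integrand, shows that $\mu_{ext}$ is a probability measure; this uses the hypothesis $\frac{1}{t}\in L^1(\mu)$ in an essential way, since otherwise the normalization constant would be undefined.

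For (i), the forward direction ($\mu\leq\nu\Rightarrow\int f\,d\mu\leq\int f\,d\nu$ for $f\in C(X)$, $f\geq 0$) follows by noting that $\nu-\mu$ is then a positive Borel measure, approximating $f$ from below by nonnegative simple functions $s_n\uparrow f$, and applying the monotone convergence theorem to obtain $\int f\,d(\nu-\mu)=\lim\int s_n\,d(\nu-\mu)\geq 0$. For the reverse direction I would first reduce to closed sets: given a closed $F\subseteq X$, use Urysohn's lemma (recall $X$ is a compact rectangle in $\mathbb{R}^2$ in our applications, hence metric and normal) to produce a decreasing sequence $f_n\in C(X)$ with $0\leq f_n\downarrow \chi_F$; the hypothesis gives $\int f_n\,d\mu\leq\int f_n\,d\nu$, and dominated convergence then yields $\mu(F)\leq\nu(F)$. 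Outer regularity of Borel measures on a compact metric space then promotes this to all Borel sets.

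The main obstacle, such as it is, lies in this last step of the reverse direction: one needs the ambient space to be regular enough that closed sets approximate Borel sets from inside (or open sets from outside) in measure. In our setting $X$ is always a compact subset of $\mathbb{R}^2$, so both $\mu$ and $\nu$ are automatically Radon and the approximation is standard; I would mention this explicitly to make clear that no hidden hypothesis is needed beyond what the paper already uses.
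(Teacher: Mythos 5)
Your proposal is correct. Note, though, that the paper offers no argument for this lemma at all: it is quoted from \cite{CuYo2}, and items (ii) and (iii) are definitions while item (i) is a standard fact, so there is no in-paper proof to compare against. Your treatment is the standard one: monotone convergence with simple functions for the forward direction of (i), and Urysohn plus inner/outer regularity for the converse; your observation that regularity of the measures is the only point of substance is exactly right, and it is automatic here since in every application $X$ is a compact interval $[0,a_1]\subseteq\mathbb{R}$ (the rectangle $X\times Y=[0,a_1]\times[0,a_2]$ lives in $\mathbb{R}^2$, but part (i) is applied on the one-dimensional factor). Your verification in (ii) that $\mu_{ext}$ has total mass $1$ is also correct, using that $\frac{1}{t}\in L^1(\mu)$ forces $\mu(X\times\{0\})=0$ so that restricting to $\{t\neq 0\}$ loses nothing from the normalizing integral.
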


\begin{lemma}
\label{backext} (\cite[Proposition 3.10]{CuYo1}) \ (Subnormal backward
extension of a $2$-variable weighted shift) \ Assume that $W_{(\alpha ,\beta )}$ is a commuting pair of hyponormal operators, and that $W_{(\alpha ,\beta )}|_{\mathcal{M}}$ is subnormal with associated measure $%
\mu _{\mathcal{M}}$. \ Then, $W_{(\alpha ,\beta )}$ is subnormal if and only
if the following conditions hold:\newline
$(i)$ $\ \frac{1}{t}\in L^{1}(\mu _{\mathcal{M}})$;\newline
$(ii)$ $\ \beta _{00}^{2}\leq (\left\Vert \frac{1}{t}\right\Vert _{L^{1}(\mu
_{\mathcal{M}})})^{-1}$;\newline
$(iii)$ $\ \beta _{00}^{2}\left\Vert \frac{1}{t}\right\Vert _{L^{1}(\mu _{%
\mathcal{M}})}(\mu _{\mathcal{M}})_{ext}^{X}\leq \xi _{0}$.\newline
Moreover, if $\beta _{00}^{2}\left\Vert \frac{1}{t}\right\Vert _{L^{1}(\mu _{%
\mathcal{M}})}=1,$ then $(\mu _{\mathcal{M}})_{ext}^{X}=\xi _{0}$. \ In the
case when $W_{(\alpha ,\beta )}$ is subnormal, the Berger measure $\mu $ of $%
W_{(\alpha ,\beta )}$ is given by%
\begin{eqnarray}
d\mu (s,t) & = & \beta _{00}^{2}\left\Vert \frac{1}{t}\right\Vert _{L^{1}(\mu _{\mathcal{M}%
})}d(\mu _{\mathcal{M}})_{ext}(s,t) \nonumber \\
&& + (d\xi _{0}(s)-\beta _{00}^{2}\left\Vert
\frac{1}{t}\right\Vert _{L^{1}(\mu _{\mathcal{M}})}d(\mu _{\mathcal{M}%
})_{ext}^{X}(s))d\delta _{0}(t).
\label{Berger}
\end{eqnarray}
\end{lemma}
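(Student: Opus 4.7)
The plan is to invoke the Berger measure characterization of subnormality and reason by direct moment computation. For necessity, assume $W_{(\alpha,\beta)}$ is subnormal with Berger measure $\mu$ on $[0,a_{1}]\times [0,a_{2}]$. A one-line moment calculation,
\[
\gamma _{(k_{1},k_{2})}(W_{(\alpha ,\beta )}|_{\mathcal{M}})=\frac{\gamma _{(k_{1},k_{2}+1)}(W_{(\alpha ,\beta )})}{\beta _{00}^{2}}=\frac{1}{\beta _{00}^{2}}\int s^{k_{1}}t^{k_{2}+1}\,d\mu (s,t),
\]
shows that $d\mu _{\mathcal{M}}(s,t)=(t/\beta _{00}^{2})\,d\mu (s,t)$. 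Inverting on $\{t>0\}$ gives $d\mu (s,t)|_{t>0}=(\beta _{00}^{2}/t)\,d\mu _{\mathcal{M}}(s,t)$, which is finite precisely when (i) holds and has total mass $\beta _{00}^{2}\Vert 1/t\Vert _{L^{1}(\mu _{\mathcal{M}})}\le 1$, giving (ii). Projecting this $\{t>0\}$-piece onto the $X$-axis yields $\beta _{00}^{2}\Vert 1/t\Vert _{L^{1}(\mu _{\mathcal{M}})}(\mu _{\mathcal{M}})_{ext}^{X}$, which must be dominated by the full $X$-marginal $\mu ^{X}=\xi _{0}$ (the Berger measure of the zero-th row); this is (iii), and the remainder sits on $\{t=0\}$ with $X$-part $d\xi _{0}(s)-\beta _{00}^{2}\Vert 1/t\Vert _{L^{1}(\mu _{\mathcal{M}})}\,d(\mu _{\mathcal{M}})_{ext}^{X}(s)$, producing formula (\ref{Berger}).

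For sufficiency, I would define $\mu$ by the stated formula and check it is a probability measure with the right moments. Condition (ii) makes the first summand positive with mass $\beta _{00}^{2}\Vert 1/t\Vert _{L^{1}(\mu _{\mathcal{M}})}\le 1$, while (iii) ensures the coefficient of $d\delta _{0}(t)$ in the second summand is a non-negative measure on $X$ carrying mass $1-\beta _{00}^{2}\Vert 1/t\Vert _{L^{1}(\mu _{\mathcal{M}})}$, so the two pieces sum to $1$. To verify $\int s^{k_{1}}t^{k_{2}}\,d\mu =\gamma _{(k_{1},k_{2})}(W_{(\alpha ,\beta )})$, split into cases: when $k_{2}\ge 1$ the $\delta _{0}$-factor annihilates the singular summand, and the first summand recovers the moments of $W_{(\alpha ,\beta )}|_{\mathcal{M}}$ rescaled by $\beta _{00}^{2}$; when $k_{2}=0$ the two contributions reassemble to $\int s^{k_{1}}\,d\xi _{0}(s)=\gamma _{(k_{1},0)}(W_{(\alpha ,\beta )})$ by cancellation of the $(\mu _{\mathcal{M}})_{ext}^{X}$ terms. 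The \emph{moreover} clause follows by inspection: equality in (ii) forces the $\delta _{0}$-supported part of $\mu$ to vanish, so $\xi _{0}=\mu ^{X}=(\mu _{\mathcal{M}})_{ext}^{X}$.

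I expect the main obstacle to lie in the necessity of (iii), because it requires a careful decomposition of $\mu ^{X}$ into its $\{t>0\}$ and $\{t=0\}$ contributions together with the identification of $\mu ^{X}$ with $\xi _{0}$; the latter leans on the fact that the zero-th row of $W_{(\alpha ,\beta )}$ is a subnormal $1$-variable weighted shift whose Berger measure must coincide with the $s$-marginal of $\mu$. A secondary technicality is bookkeeping in the equality case $\beta _{00}^{2}\Vert 1/t\Vert _{L^{1}(\mu _{\mathcal{M}})}=1$, where one must notice that the singular part disappears entirely, turning inequality (iii) into the asserted equality that pins down $(\mu _{\mathcal{M}})_{ext}^{X}$ exactly.
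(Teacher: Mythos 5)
Your argument is correct; note that the paper itself gives no proof of this lemma, quoting it from \cite[Proposition 3.10]{CuYo1}, and your moment-theoretic derivation (identifying $d\mu_{\mathcal{M}}=(t/\beta_{00}^{2})\,d\mu$ by determinacy of moments on a compact rectangle, then splitting $\mu$ into its $\{t>0\}$ and $\{t=0\}$ parts, with the converse by direct verification of the moment identities for the measure in (\ref{Berger})) is essentially the argument of the cited source. The only point worth making explicit is that $\xi_{0}$ must be read as the Berger measure of the zeroth row $\mathrm{shift}(\alpha_{(0,0)},\alpha_{(1,0)},\cdots)$, whose subnormality is implicit in the quoted statement and is exactly what you use both to identify $\mu^{X}=\xi_{0}$ in the necessity direction and to evaluate the $k_{2}=0$ moments in the sufficiency direction.
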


In the rest of this section, we restrict attention to the $2$-variable weighted shift with weight diagram given as in Figure \ref{Figure 2ex}(ii). 

\begin{lemma} \label{lem39}
Let $W_{(\alpha ,\beta )}$ be a $2$-variable weighted shift, let $\rho:=\int{\frac{1}{s} d\xi(s)}<\infty$, and assume that $x_0^2 \rho \le 1$ and $a^2 \rho \le 1$. \ Then, $W_{(\alpha ,\beta )}$ is subnormal if and only if $x_0^2 \rho(2-a^2 \rho)\le 1$.
\end{lemma}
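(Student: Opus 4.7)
The plan is to apply Lemma \ref{backext} with $\mathcal{M}$ the subspace spanned by $\{e_{(k_1,k_2)} : k_2 \geq 1\}$ (i.e., delete the $0$-th row). Under the standing hypotheses $x_0^2 \rho \leq 1$ and $a^2 \rho \leq 1$, each row and each column of $W_{(\alpha,\beta)}$ is a subnormal (hence hyponormal) $1$-variable weighted shift by Lemma \ref{backwardext}, so $T_1$ and $T_2$ are each hyponormal; commutativity is built into the diagram, so the hypotheses of Lemma \ref{backext} are met. Since that lemma is an iff, this will deliver both directions at once.

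To identify $\mu_{\mathcal{M}}$, I would first read off Figure \ref{Figure 2ex}(ii) (together with the symmetry and commutativity) that, after shifting indices, every row of $W_{(\alpha,\beta)}|_{\mathcal{M}}$ has $\alpha$-weights $a,\omega_0,\omega_1,\ldots$ and every column has $\beta$-weights $\omega_0,\omega_1,\ldots$. Thus $W_{(\alpha,\beta)}|_{\mathcal{M}}$ is of tensor form, with Berger measure $\mu_{\mathcal{M}} = \mu_{[a,\xi]} \times \xi$, where $d\mu_{[a,\xi]}(s) = \frac{a^2}{s}d\xi(s) + (1-a^2\rho)d\delta_0(s)$ by the $1$-variable backward extension formula.

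A Fubini computation then gives $\|1/t\|_{L^1(\mu_{\mathcal{M}})} = \rho$, so condition $(i)$ of Lemma \ref{backext} holds, and condition $(ii)$ is exactly the standing hypothesis $x_0^2 \rho \leq 1$. For condition $(iii)$ I would compute $d(\mu_{\mathcal{M}})_{ext}(s,t) = \frac{1}{t\rho}d\mu_{[a,\xi]}(s)d\xi(t)$ and then integrate out $t$ to obtain $(\mu_{\mathcal{M}})_{ext}^X = \mu_{[a,\xi]}$. Recognizing $\xi_0$ as the Berger measure $\mu_{[x_0,\xi]}$ of the $0$-th row, condition $(iii)$ becomes the measure inequality
\[
x_0^2\rho \cdot \mu_{[a,\xi]} \leq \mu_{[x_0,\xi]}.
\]

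The main obstacle, and the heart of the argument, is converting this measure inequality into a single scalar condition. Since $\rho<\infty$ forces $\xi(\{0\}) = 0$, both sides split uniquely into a part absolutely continuous with respect to $\xi$ and a point mass at $0$. Comparing the $\xi$-parts reduces to $a^2\rho \leq 1$, which is already assumed; comparing the atoms at $0$ gives $x_0^2\rho(1-a^2\rho) \leq 1 - x_0^2\rho$, which rearranges to $x_0^2\rho(2-a^2\rho) \leq 1$. This yields the claimed equivalence.
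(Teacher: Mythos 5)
Your proposal is correct and follows essentially the same route as the paper: identify $\mu_{\mathcal{M}}=\xi_a\times\xi$ with $\xi_a=\frac{a^2}{s}\,d\xi+(1-a^2\rho)\,\delta_0$, note $(\mu_{\mathcal{M}})_{ext}^X=\xi_a$, and reduce condition (iii) of the two-variable backward-extension lemma to the measure inequality $x_0^2\rho\,\xi_a\le\xi_{x_0}$, which splits into the assumed $a^2\rho\le 1$ on the $\xi$-absolutely-continuous part and $x_0^2\rho(2-a^2\rho)\le 1$ at the atom $\{0\}$. Your write-up actually supplies more detail than the paper (which compresses the final comparison into ``this naturally leads to the condition''), and all the added steps check out.
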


\begin{proof}
Observe that the Berger measure of $[x_0,\xi]$ is $\xi_{x_0} =\frac{x_0^2 \xi}{s} +(1-x_0^2 \rho) \delta_0$ and similarly the Berger measure of $[a,\xi]$ is $\xi_{a} =\frac{a^2 \xi}{s} +(1-a^2 \rho) \delta_0$. \ The restriction of $W_{(\alpha ,\beta )}$ to the subspace $\mathcal{M}$ is then $\mu_{\mathcal{M}}=\xi_a \times \xi$, from which it follows at once that $(\mu_{\mathcal{M}})_{ext}^X=\xi_a$. \ Therefore, for the subnormality of $W_{(\alpha ,\beta )}$ we will need $x_0^2 \rho \xi_a \le \xi_{x_0}$ and this naturally leads to the condition $x_0^2 \rho(2-a^2 \rho)\le 1$, as desired.
\end{proof}

\begin{lemma}
The 2-variable weighted shift $\widetilde{W}_{(\alpha ,\beta )}$ is hyponormal if and only if $|a-x_0|\le \omega_1-x_0$.
\end{lemma}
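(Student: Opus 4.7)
The plan is to apply the standard ``six-point test'' for joint hyponormality of a commuting $2$-variable weighted shift: $\widetilde{W}_{(\alpha,\beta)}$ is hyponormal if and only if, for every $\mathbf{k} \in \mathbb{Z}_+^2$,
\begin{equation*}
M_{\widetilde{W}}(\mathbf{k}) := \begin{pmatrix} \widetilde{\alpha}_{\mathbf{k}+\varepsilon_1}^2 - \widetilde{\alpha}_{\mathbf{k}}^2 & \widetilde{\alpha}_{\mathbf{k}+\varepsilon_2}\widetilde{\beta}_{\mathbf{k}+\varepsilon_1} - \widetilde{\alpha}_{\mathbf{k}}\widetilde{\beta}_{\mathbf{k}} \\ \widetilde{\alpha}_{\mathbf{k}+\varepsilon_2}\widetilde{\beta}_{\mathbf{k}+\varepsilon_1} - \widetilde{\alpha}_{\mathbf{k}}\widetilde{\beta}_{\mathbf{k}} & \widetilde{\beta}_{\mathbf{k}+\varepsilon_2}^2 - \widetilde{\beta}_{\mathbf{k}}^2 \end{pmatrix} \geq 0,
\end{equation*}
where $\widetilde{\alpha}_{\mathbf{k}} = \sqrt{\alpha_{\mathbf{k}}\alpha_{\mathbf{k}+\varepsilon_1}}$ and $\widetilde{\beta}_{\mathbf{k}} = \sqrt{\beta_{\mathbf{k}}\beta_{\mathbf{k}+\varepsilon_2}}$ by Lemma \ref{CartAlu}. (This test is the standard reduction of joint hyponormality of $2$-variable weighted shifts to local $2\times 2$ positivity; it is a Schur-complement rephrasing of Lemma \ref{khypo} applied with $k=1$.)

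The first step is to tabulate the Aluthge weights from Figure \ref{Figure 2ex}(ii), splitting $\mathbb{Z}_+^2$ into the corner $(0,0)$, the two axes $\{(k_1,0):k_1\ge 1\}$ and $\{(0,k_2):k_2\ge 1\}$, and the core $\{(k_1,k_2):k_1,k_2\ge 1\}$. On each region the weights reduce to simple expressions in $x_0$, $a$, and the sequence $\{\omega_i\}_{i\ge 0}$.

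The main step is to check that $M_{\widetilde{W}}(\mathbf{k}) \ge 0$ holds automatically for every $\mathbf{k}\neq (0,0)$. A direct calculation shows that whenever $k_1\ge 1$ or $k_2\ge 1$ one has $\widetilde{\alpha}_{\mathbf{k}+\varepsilon_2}\widetilde{\beta}_{\mathbf{k}+\varepsilon_1} = \widetilde{\alpha}_{\mathbf{k}}\widetilde{\beta}_{\mathbf{k}}$, so the off-diagonal entry vanishes and $M_{\widetilde{W}}(\mathbf{k})$ becomes diagonal. Its diagonal entries are then manifestly nonnegative: the monotonicity $\omega_0\le\omega_1\le\cdots$ (from the subnormality of $W_\xi$) handles the entries arising in the core and on the $\beta$-axis, while the inequality $a\le\omega_1$---which follows from $|a-x_0|\le\omega_1-x_0$ together with $x_0\le\omega_0\le\omega_1$, these last estimates coming from the hyponormality of $[x_0,\xi]$ and of $W_\xi$---handles the entries on the axes near the corner.

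The remaining (and only nontrivial) constraint comes from $\mathbf{k}=(0,0)$, where an explicit computation yields
\begin{equation*}
M_{\widetilde{W}}(0,0) = \omega_0 \begin{pmatrix} \omega_1 - x_0 & a - x_0 \\ a - x_0 & \omega_1 - x_0 \end{pmatrix}.
\end{equation*}
Since positivity forces $\omega_1\ge x_0$, the condition $M_{\widetilde{W}}(0,0)\ge 0$ reduces to $(\omega_1-x_0)^2 \ge (a-x_0)^2$, i.e., $|a-x_0|\le \omega_1-x_0$, which is precisely the stated criterion. The principal bookkeeping obstacle is the case-by-case verification in the third paragraph that the off-diagonal vanishes off the corner; this is purely algebraic and becomes transparent once the weights have been listed.
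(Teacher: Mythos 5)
Your proof is correct, and it terminates exactly where the paper's does: the only nontrivial constraint is the $2\times 2$ test at the origin, which in both arguments equals $\omega_0\bigl(\begin{smallmatrix}\omega_1-x_0 & a-x_0\\ a-x_0 & \omega_1-x_0\end{smallmatrix}\bigr)$, whose positivity is precisely $|a-x_0|\le\omega_1-x_0$ (indeed, the matrix displayed in the paper's proof under the name $M_{(0,0)}(1)$ is exactly your Six-Point matrix at $(0,0)$, i.e.\ the Schur-complement form of the moment test). The difference lies in how the tests at $\mathbf{k}\neq(0,0)$ are discarded: the paper asserts that the restrictions of $\widetilde{W}_{(\alpha,\beta)}$ to $\mathcal{M}$ and $\mathcal{N}$ are subnormal and reduces via Lemma \ref{khypo}, whereas you run the Six-Point Test (\cite[Theorem 6.1]{bridge}, \cite[Theorem 1.3]{CuYo1}) at every lattice point and check directly that off the origin the cross term vanishes and the diagonal entries are nonnegative. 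Your route is a bit longer but arguably safer: those restrictions are tensor-form pairs whose factors are the $1$-variable Aluthge transforms of $[a,\xi]$ and of $\mathrm{shift}(\omega_0,\omega_1,\cdots)$, and subnormality of the Aluthge transform of a subnormal shift is not automatic (cf.\ \cite{Ex}); what is true, and all that is needed, is their hyponormality, which is exactly what your computation establishes (it holds unconditionally under the standing assumptions, since subnormality of $[a,\xi]$ gives $a\le\omega_0\le\omega_1$). Two minor points: the inequality $a\le\omega_1$ you invoke follows already from $a-x_0\le|a-x_0|\le\omega_1-x_0$, so the auxiliary estimates $x_0\le\omega_0\le\omega_1$ are not needed for the ``if'' direction; and your use of the Six-Point Test tacitly requires that $\widetilde{W}_{(\alpha,\beta)}$ be commuting, which indeed holds for the weight diagram of Figure \ref{Figure 2ex}(ii) by (\ref{prop1eq}), but is worth saying explicitly.
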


\begin{proof}
Since the restrictions of $\widetilde{W}_{(\alpha ,\beta )}$ to the subspaces $\mathcal{M}$ and $\mathcal{N}$ are subnormal, Lemma \ref{khypo} says that $\widetilde{W}_{(\alpha ,\beta )}$ is hyponormal if and only if $M_{(0,0)}(1) \equiv M_{(0,0)}(1)(\widetilde{W}_{(\alpha ,\beta )}) \ge 0$. \ Since
$$
M_{(0,0)}(1)=\left(
\begin{array}{cc}
\omega_0 \omega_1 - x_0 \omega_0 & a \omega_0 - x_0 \omega_0 \\
a \omega_0 - x_0 \omega_0 & \omega_0 \omega_1 - x_0 \omega_0
\end{array}
\right),
$$
it follows that $\widetilde{W}_{(\alpha ,\beta )}$ is hyponormal if and only if $|a-x_0| \le \omega_1-x_0$, as desired.
\end{proof}

We observe that if $a \ge x_0$, then $|a-x_0|\le \omega_1-x_0$ becomes $a \le \omega_1$, which is always true. \ Thus, to build an example where the hyponormality of $\widetilde{W}_{ (\alpha ,\beta )}$ is violated, we must necessarily assume that $a<x_0$. \ Incidentally, this assumption automatically leads to $a^2 \rho \le x_0^2 \rho$, so that the subnormality of $W_{(\alpha ,\beta )}$ is now determined by the conditions $x_0^2 \rho \le 1$ and $x_0^2 \rho(2-a^2 \rho)\le 1$. \ In short, an example with the desired properties can be constructed once we guarantee the following three conditions:
\begin{equation}
x_0^2 \rho \le 1 \label{condition1}
\end{equation}
\begin{equation}
x_0^2 \rho (2-a^2 \rho) \le 1 \label{condition2}
\end{equation}
\begin{equation}
x_0 > \frac{\omega_1+a}{2} \label{condition3}.
\end{equation}

Notice that $2-a^2 \rho <2$, so if we were to assume that $x_0^2 \rho \le \frac{1}{2}$ then both conditions (\ref{condition1}) and (\ref{condition2}) would be simultaneously satisfied. \ Moreover, if we were to assume that $x_0 > \frac{\omega_1}{2}$, then we could always find $a<x_0$ such that $x_0>\frac{\omega_1+a}{2}$. \ We can then focus on the following question:

Can we simultaneously guarantee $x_0^2 \rho \le \frac{1}{2}$ and $x_0^2 > \frac{\omega_1^2}{4}$?

Alternatively, we need
\begin{equation}
\frac{\omega_1^2}{4} < x_0^2 \le \frac{1}{2 \rho}. \label{condition4}
\end{equation}

Now, if $\frac{\omega_1^2}{4} < \frac{1}{2 \rho}$, then it would be possible to select $x_0$ such that (\ref{condition4}) is satisfied. \ We have thus established the following result.

\begin{theorem} \label{example100}
Let $W_{(\alpha ,\beta )}$ be as above, and assume that
$$
\omega_1^2 \rho < 2.
$$
Then: (i) \ $W_{(\alpha ,\beta )}$ is subnormal; and (ii) \ $\widetilde{W}_{(\alpha ,\beta )}$ is not hyponormal.
\end{theorem}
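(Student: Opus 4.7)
The plan is to treat Theorem \ref{example100} as an existence statement: under the single hypothesis $\omega_1^2\rho<2$, I would exhibit explicit choices of the parameters $x_0$ and $a$ in Figure \ref{Figure 2ex}(ii) for which (i) and (ii) hold simultaneously. All of the analytic work is already in place. Lemma \ref{lem39} reduces subnormality of $W_{(\alpha,\beta)}$ to the three inequalities $x_0^2\rho\le 1$, $a^2\rho\le 1$, and $x_0^2\rho(2-a^2\rho)\le 1$, which are exactly (\ref{condition1}) and (\ref{condition2}); the preceding lemma reduces hyponormality of $\widetilde{W}_{(\alpha,\beta)}$ to $|a-x_0|\le\omega_1-x_0$, whose negation in the regime $a<x_0$ is precisely (\ref{condition3}). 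Thus the proof reduces to a feasibility check.

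First, I would observe that $\omega_1^2\rho<2$ rewrites as $\omega_1^2/4<1/(2\rho)$, so the interval $(\omega_1^2/4,\;1/(2\rho)]$ is nonempty, and I can pick $x_0$ with $\omega_1^2/4<x_0^2\le 1/(2\rho)$, i.e., (\ref{condition4}). This choice gives $x_0^2\rho\le 1/2$, which at once secures (\ref{condition1}), and also $x_0>\omega_1/2$, hence $2x_0-\omega_1>0$. I then pick $a$ satisfying $0<a<\min\{x_0,\;2x_0-\omega_1\}$. The condition $a<x_0$ yields $a^2\rho\le x_0^2\rho\le 1/2$, and combined with $2-a^2\rho<2$ it gives $x_0^2\rho(2-a^2\rho)\le 2x_0^2\rho\le 1$, which is (\ref{condition2}). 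By Lemma \ref{lem39}, $W_{(\alpha,\beta)}$ is subnormal, proving (i).

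For (ii), the second constraint $a<2x_0-\omega_1$ rearranges to $x_0-a>\omega_1-x_0$; since $a<x_0$, this is the strict violation $|a-x_0|>\omega_1-x_0$ of the hyponormality criterion for $\widetilde{W}_{(\alpha,\beta)}$ established in the lemma immediately preceding the theorem. Hence $\widetilde{W}_{(\alpha,\beta)}$ is not hyponormal.

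The only real obstacle is ensuring the feasibility region for $(x_0,a)$ is nonempty, and the hypothesis $\omega_1^2\rho<2$ is tailored precisely for that: it is the exact condition that forces the interval in (\ref{condition4}) to be nonempty, after which a single compatible choice of $a$ is immediate. No delicate estimate beyond the elementary chain $x_0^2\rho(2-a^2\rho)\le 2x_0^2\rho\le 1$ is needed, so the proof should be short once the construction above is spelled out.
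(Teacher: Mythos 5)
Your proposal is correct and follows essentially the same route as the paper: the theorem is the culmination of the discussion preceding it, where conditions (\ref{condition1})--(\ref{condition3}) are reduced to the feasibility of (\ref{condition4}), which is nonempty exactly when $\omega_1^2\rho<2$. Your reading of the statement as an existence claim (with $x_0$ chosen in the interval $(\omega_1^2/4,\,1/(2\rho)]$ and then $a<\min\{x_0,\,2x_0-\omega_1\}$) matches the paper's intent, and your explicit verification of the chain $x_0^2\rho(2-a^2\rho)\le 2x_0^2\rho\le 1$ together with the strict violation $|a-x_0|>\omega_1-x_0$ is exactly the argument the authors summarize before stating the theorem.
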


We will now show that the condition in Theorem \ref{example100} holds for a large class of $2$-variable weighted shifts. \

\begin{example}
Consider the case when the measure $\xi$ is $2$-atomic, that is, $\xi \equiv r \delta_p + s \delta_q$, with $r,s>0$, $r+s=1$ and $0<p<q$. \ (Recall that $0$ cannot be in the support of $\xi$, because otherwise $\frac{1}{s} \notin L^1(\xi)$.) \ We compute
$$
\omega_1^2 \rho = \frac{rp^2+sq^2}{rp+sq}(\frac{r}{p}+\frac{s}{q})=\frac{r+s (\frac{q}{p})^2}{r+s\frac{q}{p}}(r+\frac{s}{\frac{q}{p}}).
$$
Thus, without loss of generality we can always assume that $p=1$, that is,
$$
\omega_1^2 \rho=\frac{r+sq^2}{r+sq}(r+\frac{s}{q}).
$$
A calculation using {\it Mathematica} \cite{Wol} reveals that for $1 < q \le \tilde{q}:=1/2 + \sqrt{2} + \frac{1}{2} (\sqrt{5 + 4 \sqrt{2}}) \approx 3.546$, we have $\frac{r+sq^2}{r+sq}(r+\frac{s}{q})<2$ for all $r,s>0$ with $r+s=1$. \ As a matter of fact, there is a region $R$ in the $(s,q)$-plane bounded by the graph $q=f(s)$ of a positive convex function $f$, such that $\omega_1^2 \rho < 2$ precisely when $1<q<f(s)$; $R$ contains the rectangle $[0,1] \times (1,\tilde{q}]$. \qed
\end{example}

We have thus established the existence of subnormal $2$-variable weighted shifts $W_{(\alpha ,\beta )}$ with non-hyponormal toral Aluthge transforms.



\section{\label{Sec2}The spherical Aluthge Transform}

In this section, we study the second plausible definition of the multivariable Aluthge transform, which we will denote, to avoid confusion, by
$\widehat{(T_{1},T_{2})}$; this corresponds to (\ref{Def-Alu1}). \ We begin with the following elementary result.

\begin{proposition} \label{basic}
Assume that $(T_1,T_2) \equiv (V_1P,V_2P)$, where $P=(T_1^*T_1+T_2^*T_2)^{1/2}$, and let $\widehat {(T_1,T_2)}\equiv (\widehat{T_1},\widehat{T_2}):=(\sqrt{P}V_1\sqrt{P},\sqrt{P}V_2\sqrt{P})$. \ Assume also that $(T_1,T_2)$ is commutative. \ Then \newline
(i) \ $(V_1,V_2)$ is a (joint) partial isometry; more precisely, $V_1^*V_1+V_2^*V_2$ is the projection onto $\emph{ran} \; P$; \newline
(ii) \ $\widehat {(T_1,T_2)}$ is commutative on $\emph{ran} \;P$, so in particular $\widehat {(T_1,T_2)}$ is commutative whenever $P$ is injective.
\end{proposition}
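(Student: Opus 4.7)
For part (i), the plan is to exploit the identity
\begin{equation*}
P^{2} = T_{1}^{\ast}T_{1} + T_{2}^{\ast}T_{2} = PV_{1}^{\ast}V_{1}P + PV_{2}^{\ast}V_{2}P = PRP,
\end{equation*}
where $R := V_{1}^{\ast}V_{1} + V_{2}^{\ast}V_{2}$. I would adopt the standard convention for the joint polar decomposition $T_{i} = V_{i}P$: the $V_{i}$ are first defined on $\emph{ran}\,P$ by $V_{i}(Pw) := T_{i}w$ (well defined because $\|T_{i}w\|^{2} \le \|T_{1}w\|^{2}+\|T_{2}w\|^{2} = \|Pw\|^{2}$) and then extended by $0$ to $\ker P = (\overline{\emph{ran}\,P})^{\perp}$. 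With this convention $V_{i} = 0$ on $\ker P$, whence $V_{i}^{\ast}V_{i} = 0$ there, so $R = 0$ on $\ker P$; self-adjointness of $R$ then forces $R$ to map $\overline{\emph{ran}\,P}$ into itself.

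On $\overline{\emph{ran}\,P}$ I would show $R = I$. The identity $PRP = P^{2}$ gives $\langle Ry, y \rangle = \|y\|^{2}$ for every $y \in \emph{ran}\,P$, and this extends by continuity to $y \in \overline{\emph{ran}\,P}$. Hence $\langle (R-I)y, y \rangle = 0$ on $\overline{\emph{ran}\,P}$; since $R-I$ is self-adjoint, polarization in a complex Hilbert space forces it to vanish on that subspace. Combining the two pieces, $R$ is exactly the orthogonal projection onto $\overline{\emph{ran}\,P}$, proving (i).

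For part (ii), I would compute
\begin{equation*}
\widehat{T_{1}}\widehat{T_{2}} - \widehat{T_{2}}\widehat{T_{1}} = \sqrt{P}\,V_{1}PV_{2}\,\sqrt{P} - \sqrt{P}\,V_{2}PV_{1}\,\sqrt{P} = \sqrt{P}\,(V_{1}PV_{2} - V_{2}PV_{1})\,\sqrt{P},
\end{equation*}
using $\sqrt{P}\sqrt{P} = P$. The commutativity of $(T_{1},T_{2})$ says $V_{1}PV_{2}P = T_{1}T_{2} = T_{2}T_{1} = V_{2}PV_{1}P$, so $(V_{1}PV_{2} - V_{2}PV_{1})\,P = 0$; equivalently, $V_{1}PV_{2} - V_{2}PV_{1}$ annihilates $\emph{ran}\,P$. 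Given $z \in \emph{ran}\,P$, write $z = Pw$; then $\sqrt{P}\,z = \sqrt{P}\,Pw = P\sqrt{P}\,w \in \emph{ran}\,P$ (since $\sqrt{P}$ commutes with $P$), so $(V_{1}PV_{2} - V_{2}PV_{1})\sqrt{P}\,z = 0$ and therefore $\widehat{T_{1}}\widehat{T_{2}}z = \widehat{T_{2}}\widehat{T_{1}}z$. When $P$ is injective, self-adjointness gives $\overline{\emph{ran}\,P} = \mathcal{H}$, and continuity of the commutator promotes the identity to all of $\mathcal{H}$.

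The only subtle point is in (i): the joint polar decomposition is not uniquely determined by $(T_{1},T_{2})$ without prescribing the behavior of $V_{i}$ on $\ker P$. Once the convention $V_{i}|_{\ker P} = 0$ is fixed, both assertions reduce to the two algebraic identities $PRP = P^{2}$ and $(V_{1}PV_{2} - V_{2}PV_{1})\,P = 0$, together with one invocation of polarization for self-adjoint operators on a complex Hilbert space.
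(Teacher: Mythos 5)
Your proof is correct and follows essentially the same route as the paper: part (i) rests on the identity $P(V_1^*V_1+V_2^*V_2)P=P^2$, and part (ii) on the fact that the commutator of the $\widehat{T_i}$ factors through $(V_1PV_2-V_2PV_1)P=T_1T_2-T_2T_1=0$ and hence vanishes on $\mathrm{ran}\,P$. You are in fact somewhat more careful than the paper, which does not spell out the normalization $V_i|_{\ker P}=0$ or the polarization step needed to upgrade $\langle (R-I)y,y\rangle=0$ on $\overline{\mathrm{ran}\,P}$ to the assertion that $V_1^*V_1+V_2^*V_2$ is precisely the projection onto $\overline{\mathrm{ran}\,P}$.
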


\begin{proof} \ (i) An easy computation reveals that
$$
P^2=T_1^*T_1+T_2^*T_2=(V_1P)^*(V_1P)+(V_2P)^*(V_2P)=P(V_1^*V_1+V_2^*V_2)P,
$$
and therefore $(V_1^*V_1+V_2^*V_2)|_{\textrm{ran} \; P}$ is the identity operator on $\textrm{ran} \; P$, as desired.

To prove (ii), consider the product
$$
\widehat{T_1}\widehat{T_2}=\sqrt{P}V_1\sqrt{P}\sqrt{P}V_2\sqrt{P}=\sqrt{P}V_1PV_2\sqrt{P}.
$$
Then
\begin{eqnarray*}
\widehat{T_{1}}\widehat{T_{2}}\sqrt{P} &=&\sqrt{P}T_{1}T_{2}=\sqrt{P}%
T_{2}T_{1}=(\sqrt{P}V_{2}PV_{1}\sqrt{P})\sqrt{P} \\
&=&(\sqrt{P}V_{2}\sqrt{P})(\sqrt{P}V_{1}\sqrt{P})\sqrt{P}=\widehat{T_{2}}%
\widehat{T_{1}}\sqrt{P}.
\end{eqnarray*}
It follows at once that $\widehat{T_1}\widehat{T_2}-\widehat{T_2}\widehat{T_1}$ vanishes on $\textrm{ran} \; P$, as desired.
\end{proof}

We now prove:

\begin{proposition}
\label{Prop1}Given a $2$-variable weighted shift $W_{(\alpha ,\beta )}\equiv
(T_{1},T_{2})$, let $\widehat{W}_{(\alpha ,\beta )}$ be given by (\ref{Def-Alu1}). \ Assume that $W_{(\alpha ,\beta )}$ is commutative. \ Then $\widehat{W}_{(\alpha ,\beta )}$ is commutative.
\end{proposition}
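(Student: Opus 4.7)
The plan is to reduce everything to Proposition \ref{basic}(ii), which already asserts that $\widehat{(T_1,T_2)}$ commutes on $\operatorname{ran} P$. All that must be verified for a $2$-variable weighted shift is that $\operatorname{ran} P$ is dense in $\ell^2(\mathbb{Z}_+^2)$, equivalently, that $P := \sqrt{T_1^*T_1 + T_2^*T_2}$ is injective.

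First, I would compute $P$ explicitly on the canonical basis. Because $T_1 e_{\mathbf{k}} = \alpha_{\mathbf{k}} e_{\mathbf{k}+\varepsilon_1}$ and $T_1^* e_{\mathbf{k}+\varepsilon_1} = \alpha_{\mathbf{k}} e_{\mathbf{k}}$, the operator $T_1^*T_1$ is diagonal with $T_1^*T_1 e_{\mathbf{k}} = \alpha_{\mathbf{k}}^2 e_{\mathbf{k}}$; similarly $T_2^*T_2 e_{\mathbf{k}} = \beta_{\mathbf{k}}^2 e_{\mathbf{k}}$. Hence $P^2$ is diagonal and
\[
Pe_{\mathbf{k}} \; = \; \sqrt{\alpha_{\mathbf{k}}^2+\beta_{\mathbf{k}}^2}\, e_{\mathbf{k}} \qquad (\mathbf{k}\in\mathbb{Z}_+^2).
\]
Since the weight sequences consist of strictly positive reals, every diagonal entry $\sqrt{\alpha_{\mathbf{k}}^2+\beta_{\mathbf{k}}^2}$ is nonzero, so $\ker P=\{0\}$. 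Because $P=P^*$, this yields $\overline{\operatorname{ran} P}=(\ker P)^\perp=\ell^2(\mathbb{Z}_+^2)$, i.e., $\operatorname{ran} P$ is dense.

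Now I would invoke Proposition \ref{basic}(ii): the bounded operator $\widehat{T_1}\widehat{T_2}-\widehat{T_2}\widehat{T_1}$ vanishes on $\operatorname{ran} P$. A bounded operator that vanishes on a dense subspace vanishes identically, so $\widehat{T_1}\widehat{T_2}=\widehat{T_2}\widehat{T_1}$ on all of $\ell^2(\mathbb{Z}_+^2)$, proving that $\widehat{W}_{(\alpha,\beta)}$ is commutative.

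There is essentially no obstacle here: the diagonal form of $P$, guaranteed by the positivity of the weights, makes the argument immediate. This is in stark contrast with the toral case (Proposition \ref{commuting1}), where commutativity of $\widetilde{W}_{(\alpha,\beta)}$ requires the extra constraint (\ref{prop1eq}) on the weight sequences; the spherical transform passes the commutativity test automatically precisely because the single joint positive factor $P$ is built from all the weights simultaneously and is injective whenever the weights are positive.
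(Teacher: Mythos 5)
Your argument is correct and is exactly the route the paper takes: its proof of Proposition \ref{Prop1} is simply "straightforward from Proposition \ref{basic}," and what you have written is the expected filling-in of that step, namely that $P^2=T_1^*T_1+T_2^*T_2$ is diagonal with strictly positive entries $\alpha_{\mathbf{k}}^2+\beta_{\mathbf{k}}^2$, so $P$ is injective with dense range and Proposition \ref{basic}(ii) forces the bounded commutator $\widehat{T_1}\widehat{T_2}-\widehat{T_2}\widehat{T_1}$ to vanish identically. Your closing remark contrasting this with the toral case is also consistent with the paper's discussion.
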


\begin{proof}
Straightforward from Proposition \ref{basic}.
\end{proof}

We briefly pause to describe how $\widehat{W}_{(\alpha ,\beta )}$ acts on the canonical orthonormal basis vectors.

\begin{lemma} \label{PolarAlu}
Let $W_{(\alpha ,\beta )} \equiv \left(T_{1},T_{2}\right)$ be a $2$-variable weighted shift. \ Then
$$
\widehat{T}_{1}e_{\mathbf{k}}=\alpha_{\mathbf{k}} \frac{(\alpha_{\mathbf{k}+\mathbf{\epsilon}_1}^2+\beta_{\mathbf{k}+\mathbf{\epsilon}_1}^2)^{1/4}}{(\alpha_{\mathbf{k}}^2+\beta_{\mathbf{k}}^2)^{1/4}} e_{\mathbf{k}+\mathbf{\epsilon}_1}
$$
and
$$
\widehat{T}_{2}e_{\mathbf{k}}=\beta_{\mathbf{k}} \frac{(\alpha_{\mathbf{k}+\mathbf{\epsilon}_2}^2+\beta_{\mathbf{k}+\mathbf{\epsilon}_2}^2)^{1/4}}{(\alpha_{\mathbf{k}}^2+\beta_{\mathbf{k}}^2)^{1/4}} e_{\mathbf{k}+\mathbf{\epsilon}_2}
$$
for all $\mathbf{k} \in \mathbb{Z}_+^2$.
\end{lemma}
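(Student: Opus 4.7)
The plan is to evaluate the composition $\widehat{T}_i = \sqrt{P}\,V_i\,\sqrt{P}$ on an arbitrary canonical basis vector $e_{\mathbf{k}}$ by computing each of the three factors separately. All ingredients turn out to be either diagonal or weighted-shift-like with respect to the basis $\{e_{\mathbf{k}}\}$, so the argument is a bookkeeping exercise built from the elementary identities $T_1 e_{\mathbf{k}} = \alpha_{\mathbf{k}} e_{\mathbf{k}+\varepsilon_1}$ and $T_2 e_{\mathbf{k}} = \beta_{\mathbf{k}} e_{\mathbf{k}+\varepsilon_2}$.

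First I would compute $T_1^{\ast}T_1\,e_{\mathbf{k}} = \alpha_{\mathbf{k}}^2 e_{\mathbf{k}}$ and $T_2^{\ast}T_2\,e_{\mathbf{k}} = \beta_{\mathbf{k}}^2 e_{\mathbf{k}}$ directly from the action of $T_i$ on basis vectors and the dual action of $T_i^{\ast}$. Adding these gives
\[
P^{2}e_{\mathbf{k}} = (\alpha_{\mathbf{k}}^2+\beta_{\mathbf{k}}^2)\, e_{\mathbf{k}},
\]
so each $e_{\mathbf{k}}$ spans a one-dimensional reducing subspace for $P$, and both $P$ and its square root $\sqrt{P}$ are diagonal in this basis, with
\[
\sqrt{P}\,e_{\mathbf{k}} = (\alpha_{\mathbf{k}}^2+\beta_{\mathbf{k}}^2)^{1/4} e_{\mathbf{k}}.
\]

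Next I would recover the action of $V_i$ on basis vectors from the factorization $T_i = V_i P$. Since all weights are strictly positive, $P$ restricted to $\mathbb{C}\,e_{\mathbf{k}}$ is multiplication by the nonzero scalar $(\alpha_{\mathbf{k}}^2+\beta_{\mathbf{k}}^2)^{1/2}$, whence
\[
V_i\,e_{\mathbf{k}} = (\alpha_{\mathbf{k}}^2+\beta_{\mathbf{k}}^2)^{-1/2}\, T_i\,e_{\mathbf{k}}.
\]
This yields $V_1\,e_{\mathbf{k}} = \alpha_{\mathbf{k}}(\alpha_{\mathbf{k}}^2+\beta_{\mathbf{k}}^2)^{-1/2}\, e_{\mathbf{k}+\varepsilon_1}$ and the analogous formula for $V_2$.

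Finally, I would assemble the three factors in order, applying $\sqrt{P}$, then $V_i$, then $\sqrt{P}$ to $e_{\mathbf{k}}$: the rightmost $\sqrt{P}$ contributes $(\alpha_{\mathbf{k}}^2+\beta_{\mathbf{k}}^2)^{1/4}$; $V_i$ multiplies by $\alpha_{\mathbf{k}}(\alpha_{\mathbf{k}}^2+\beta_{\mathbf{k}}^2)^{-1/2}$ (resp.\ $\beta_{\mathbf{k}}(\alpha_{\mathbf{k}}^2+\beta_{\mathbf{k}}^2)^{-1/2}$) and shifts the index to $\mathbf{k}+\varepsilon_i$; and the outermost $\sqrt{P}$ then contributes $(\alpha_{\mathbf{k}+\varepsilon_i}^2+\beta_{\mathbf{k}+\varepsilon_i}^2)^{1/4}$. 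Combining the three factors of $(\alpha_{\mathbf{k}}^2+\beta_{\mathbf{k}}^2)$ gives the net power $\tfrac{1}{4} - \tfrac{1}{2} = -\tfrac{1}{4}$, producing exactly the stated expression. There is no real obstacle in the proof; the only point worth a brief remark is that $V_i$ is unambiguously defined on the dense linear span of $\{e_{\mathbf{k}}\}$ because positivity of the weights makes $P$ injective on each one-dimensional invariant subspace $\mathbb{C}\,e_{\mathbf{k}}$, and the resulting formula extends by continuity to a bounded operator on $\ell^{2}(\mathbb{Z}_+^2)$.
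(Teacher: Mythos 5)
Your proof is correct and is exactly the computation the paper has in mind: the paper's own proof of this lemma is simply ``Straightforward from (\ref{Def-Alu1}),'' and your diagonalization of $P$ on the canonical basis followed by the three-factor bookkeeping is the intended verification. No issues.
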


\begin{proof}
Straightforward from (\ref{Def-Alu1}).
\end{proof}

\setlength{\unitlength}{1mm} \psset{unit=1mm}
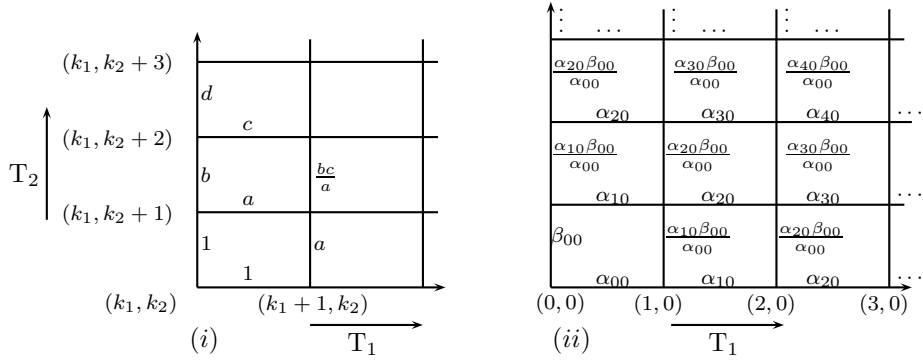
\begin{figure}[th]
\begin{center}
\begin{picture}(120,45)

\psline{->}(20,6)(53,6)
\psline(20,16)(52,16)
\psline(20,26)(52,26)
\psline(20,36)(52,36)
\psline{->}(20,6)(20,40)
\psline(35,6)(35,39)
\psline(50,6)(50,39)

\put(7.7,3.2){\footnotesize{$(k_1,k_2)$}}
\put(19,-2){$(i)$}
\put(28,3){\footnotesize{$(k_1+1,k_2)$}}

\put(26,7){\footnotesize{$1$}}

\put(26,17){\footnotesize{$a$}}

\put(26,27){\footnotesize{$c$}}

\psline{->}(35,1)(50,1)
\put(40,-2.5){$\rm{T}_1$}
\psline{->}(0, 15)(0,30)
\put(-5,20){$\rm{T}_2$}

\put(2,15){\footnotesize{$(k_1,k_2+1)$}}
\put(2,25){\footnotesize{$(k_1,k_2+2)$}}
\put(2,35){\footnotesize{$(k_1,k_2+3)$}}

\put(20.5,11){\footnotesize{$1$}}
\put(20.5,20){\footnotesize{$b$}}
\put(20.5,31){\footnotesize{$d$}}

\put(35.5,11){\footnotesize{$a$}}
\put(35.5,20){\footnotesize{$\frac{bc}{a}$}}


\psline{->}(67,6)(115,6)
\psline(67,17)(114,17)
\psline(67,28)(115,28)
\psline(67,39)(114,39)
\psline{->}(67,6)(67,44)
\psline(82,6)(82,43)
\psline(97,6)(97,43)
\psline(112,6)(112,43)

\put(65,3.2){\footnotesize{$(0,0)$}}
\put(67.2,-2){$(ii)$}
\put(78,3){\footnotesize{$(1,0)$}}
\put(93,3){\footnotesize{$(2,0)$}}
\put(108,3){\footnotesize{$(3,0)$}}

\put(73,6.5){\footnotesize{${\alpha_{00}}$}}
\put(87,6.5){\footnotesize{${\alpha_{10}}$}}
\put(101,6.5){\footnotesize{${\alpha_{20}}$}}
\put(113,6.6){\footnotesize{$\cdots$}}

\put(73,17.5){\footnotesize{${\alpha_{10}}$}}
\put(87,17.5){\footnotesize{${\alpha_{20}}$}}
\put(101,17.5){\footnotesize{${\alpha_{30}}$}}
\put(113,17.6){\footnotesize{$\cdots$}}

\put(73,28.5){\footnotesize{${\alpha_{20}}$}}
\put(87,28.5){\footnotesize{${\alpha_{30}}$}}
\put(101,28.5){\footnotesize{${\alpha_{40}}$}}
\put(113,28.6){\footnotesize{$\cdots$}}

\put(73,39.5){\footnotesize{$\cdots$}}
\put(87,39.5){\footnotesize{$\cdots$}}
\put(102,39.5){\footnotesize{$\cdots$}}

\psline{->}(83,1)(98,1)
\put(88,-2.5){$\rm{T}_1$}

\put(67,12){\footnotesize{$\beta_{00}$}}
\put(67,23){\footnotesize{$\frac{\alpha_{10}\beta_{00}}{\alpha_{00}}$}}
\put(67,34){\footnotesize{$\frac{\alpha_{20}\beta_{00}}{\alpha_{00}}$}}
\put(68,40){\footnotesize{$\vdots$}}

\put(82,12){\footnotesize{$\frac{\alpha_{10}\beta_{00}}{\alpha_{00}}$}}
\put(82,23){\footnotesize{$\frac{\alpha_{20}\beta_{00}}{\alpha_{00}}$}}
\put(83,34){\footnotesize{$\frac{\alpha_{30}\beta_{00}}{\alpha_{00}}$}}
\put(83,40){\footnotesize{$\vdots$}}

\put(97,12){\footnotesize{$\frac{\alpha_{20}\beta_{00}}{\alpha_{00}}$}}
\put(98,23){\footnotesize{$\frac{\alpha_{30}\beta_{00}}{\alpha_{00}}$}}
\put(98,34){\footnotesize{$\frac{\alpha_{40}\beta_{00}}{\alpha_{00}}$}}
\put(98,40){\footnotesize{$\vdots$}}

\end{picture}
\end{center}
\caption{Weight diagram of the 2-variable weighted shift in Proposition
\protect\ref{Proposition1} and weight diagram of a commuting $2$-variable weighted shift for which
the toral and spherical Aluthge transforms coincide, respectively.}
\label{Fig 1}
\end{figure}

We next have:

\begin{proposition}
\label{Proposition1} Consider a $2$-variable weighted shift $W_{(\alpha
,\beta )}\equiv \left( T_{1},T_{2}\right)$, and assume that $W_{(\alpha
,\beta )}$ is a commuting pair of hyponormal operators. \ Then so is $\widehat{W}_{(\alpha ,\beta )}$.
\end{proposition}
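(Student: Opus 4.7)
The plan is to reduce the hyponormality of each $\widehat{T}_{i}$ to a scalar weight inequality via Lemma \ref{PolarAlu}, and then to deduce that inequality from the hyponormality of $T_{1},T_{2}$ together with the Cauchy--Schwarz inequality and the commutativity relation (\ref{commuting}).

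Set $\gamma_{\mathbf{k}}:=\alpha_{\mathbf{k}}^{2}+\beta_{\mathbf{k}}^{2}$. By Lemma \ref{PolarAlu}, $\widehat{W}_{(\alpha,\beta)}$ is itself a $2$-variable weighted shift with weights $\widehat{\alpha}_{\mathbf{k}}=\alpha_{\mathbf{k}}(\gamma_{\mathbf{k}+\varepsilon_{1}}/\gamma_{\mathbf{k}})^{1/4}$ and $\widehat{\beta}_{\mathbf{k}}=\beta_{\mathbf{k}}(\gamma_{\mathbf{k}+\varepsilon_{2}}/\gamma_{\mathbf{k}})^{1/4}$. Since both $T_{1}$ and $\widehat{T}_{1}$ decompose as orthogonal direct sums over $k_{2}$ of $1$-variable weighted shifts, the hyponormality of $T_{1}$ (resp.\ of $\widehat{T}_{1}$) is equivalent to the monotonicity $\alpha_{\mathbf{k}+\varepsilon_{1}}\geq \alpha_{\mathbf{k}}$ (resp.\ $\widehat{\alpha}_{\mathbf{k}+\varepsilon_{1}}\geq \widehat{\alpha}_{\mathbf{k}}$) for every $\mathbf{k}\in\mathbb{Z}_{+}^{2}$. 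Raising the latter to the fourth power and clearing denominators, the goal becomes the scalar inequality
\[
\alpha_{\mathbf{k}+\varepsilon_{1}}^{2}\sqrt{\gamma_{\mathbf{k}+2\varepsilon_{1}}\,\gamma_{\mathbf{k}}}\;\geq\;\alpha_{\mathbf{k}}^{2}\,\gamma_{\mathbf{k}+\varepsilon_{1}}. \qquad (\ast)
\]

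To prove $(\ast)$, I would apply Cauchy--Schwarz in $\mathbb{R}^{2}$ to the pairs $(\alpha_{\mathbf{k}+2\varepsilon_{1}},\beta_{\mathbf{k}+2\varepsilon_{1}})$ and $(\alpha_{\mathbf{k}},\beta_{\mathbf{k}})$ to obtain $\sqrt{\gamma_{\mathbf{k}+2\varepsilon_{1}}\gamma_{\mathbf{k}}}\geq \alpha_{\mathbf{k}+2\varepsilon_{1}}\alpha_{\mathbf{k}}+\beta_{\mathbf{k}+2\varepsilon_{1}}\beta_{\mathbf{k}}$, and then use (\ref{commuting}) twice to write $\beta_{\mathbf{k}+\varepsilon_{1}}=\alpha_{\mathbf{k}+\varepsilon_{2}}\beta_{\mathbf{k}}/\alpha_{\mathbf{k}}$ and $\beta_{\mathbf{k}+2\varepsilon_{1}}=\alpha_{\mathbf{k}+\varepsilon_{1}+\varepsilon_{2}}\alpha_{\mathbf{k}+\varepsilon_{2}}\beta_{\mathbf{k}}/(\alpha_{\mathbf{k}+\varepsilon_{1}}\alpha_{\mathbf{k}})$. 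Substituting into $(\ast)$ and simplifying, the inequality reduces (after routine algebra) to
\[
\alpha_{\mathbf{k}+\varepsilon_{1}}^{2}\alpha_{\mathbf{k}}\bigl(\alpha_{\mathbf{k}+2\varepsilon_{1}}-\alpha_{\mathbf{k}}\bigr)\;\geq\;\frac{\alpha_{\mathbf{k}+\varepsilon_{2}}\beta_{\mathbf{k}}^{2}}{\alpha_{\mathbf{k}}}\bigl(\alpha_{\mathbf{k}}\alpha_{\mathbf{k}+\varepsilon_{2}}-\alpha_{\mathbf{k}+\varepsilon_{1}}\alpha_{\mathbf{k}+\varepsilon_{1}+\varepsilon_{2}}\bigr).
\]
The left-hand side is non-negative because $\alpha_{\mathbf{k}+2\varepsilon_{1}}\geq \alpha_{\mathbf{k}}$ by hyponormality of $T_{1}$; the right-hand side is non-positive because two instances of the same hypothesis (at $\mathbf{k}$ and at $\mathbf{k}+\varepsilon_{2}$), multiplied together, yield $\alpha_{\mathbf{k}+\varepsilon_{1}}\alpha_{\mathbf{k}+\varepsilon_{1}+\varepsilon_{2}}\geq \alpha_{\mathbf{k}}\alpha_{\mathbf{k}+\varepsilon_{2}}$. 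This establishes $(\ast)$ and therefore the hyponormality of $\widehat{T}_{1}$.

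The hyponormality of $\widehat{T}_{2}$ follows by the entirely symmetric argument, interchanging the roles of $\alpha$ and $\beta$ (and of $\varepsilon_{1}$ and $\varepsilon_{2}$) and invoking the hyponormality of $T_{2}$ in place of $T_{1}$. I expect the main obstacle to be the bookkeeping in the algebraic reduction of $(\ast)$: the Cauchy--Schwarz bound must interact with the two commutativity substitutions so that every $\beta$ beyond the factor $\beta_{\mathbf{k}}^{2}$ is eliminated, and then two distinct applications of the hyponormality of $T_{1}$ (a ``vertical'' one at $\mathbf{k}$ and a ``parallel'' one at $\mathbf{k}+\varepsilon_{2}$) must align with opposite signs on the two sides of the residual inequality. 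No joint hyponormality assumption is used, and because $\widehat{T}_{1}$ and $\widehat{T}_{2}$ each split as a direct sum of $1$-variable weighted shifts, the problem remains purely scalar after the initial reduction.
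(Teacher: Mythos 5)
Your argument is correct, and I verified the algebra: after clearing denominators, the Cauchy--Schwarz bound $\sqrt{\gamma_{\mathbf{k}+2\varepsilon_{1}}\gamma_{\mathbf{k}}}\geq\alpha_{\mathbf{k}+2\varepsilon_{1}}\alpha_{\mathbf{k}}+\beta_{\mathbf{k}+2\varepsilon_{1}}\beta_{\mathbf{k}}$ together with the two commutativity substitutions does reduce $(\ast)$ to a sum of two manifestly nonnegative terms, each controlled by the hyponormality of $T_{1}$ alone. This is essentially the paper's approach --- the paper runs the symmetric computation for $\widehat{T}_{2}$, first normalizing $\alpha_{\mathbf{k}}=\beta_{\mathbf{k}}=1$ and applying Cauchy--Schwarz in the power-mean form $(a^{2}+b^{2})^{2}\leq 2(a^{4}+b^{4})$ at the intermediate lattice point rather than between the two endpoint weight vectors, but the strategy (reduce to a scalar weight inequality, then combine Cauchy--Schwarz, single-coordinate hyponormality, and the commutativity relation) is the same.
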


\begin{proof}
We will establish that $\widehat{T}_2$ is hyponormal. \ Fix a lattice point $(k_1,k_2)$; we would like to prove that $\widehat{\beta}_{(k_1,k_2)} 
\le \widehat{\beta}_{(k_1,k_2+1)}$. \ Since the hyponormality of a Hilbert space operator is invariant under multiplication by a nonzero scalar, we can, without loss of generality, assume that $\alpha_{(k_1,k_2)}=\beta_{(k_1,k_2)}=1$. \ To simplify the calculation, let $a:=\alpha_{(k_1,k_2+1)}$, 
$b:=\beta_{(k_1,k_2+1)}$, $c:=\alpha_{(k_1,k_2+2)}$ and $d:=\beta_{(k_1,k_2+2)}$. \ Thus, the weight diagram of $(T_1,T_2)$ is now given as in Figure %
\ref{Fig 1}(i). \ Since $T_2$ is hyponormal, we must necessarily have 
\begin{equation} \label{bc}
a \le \frac{bc}{a}
\end{equation}
in the first column of the weight diagram in Figure \ref{Fig 1}(i). \ Recall also the Cauchy-Schwarz inequality $a^2b^2 \le \frac{a^4+b^4}{2}$. \ Then 
\begin{eqnarray*}
\widehat{\beta }_{(k_{1},k_{2})}^{4}&=&\frac{a^2+b^2}{2}=\frac{(a^2+b^2)^2}{2(a^2+b^2)}=\frac{a^4+2a^2b^2+b^4}{2(a^2+b^2)} \\
& \le &\frac{a^4+b^4}{a^2+b^2} \le \frac{b^2c^2+b^2d^2}{a^2+b^2} \; \; (\textrm{by } (\ref{bc}) \textrm{ and the fact that } b \le d) \\
& = &b^2 \cdot \frac{c^2+d^2}{a^2+b^2} \le b^4 \cdot \frac{c^2+d^2}{a^2+b^2} = \widehat{\beta }_{(k_{1},k_{2}+1)}^{4},
\end{eqnarray*}
as desired.
\end{proof}

We now present an example of a hyponormal $2$-variable weighted shift $W_{(\alpha
,\beta )}$ for which $\widetilde{W}_{(\alpha
,\beta )}$ is not hyponormal. \ While we have already encountered this behavior (cf. Theorem \ref{example100}), the simplicity of the following example warrants special mention (aware as we are that the result is weaker than Theorem \ref{example100}). \ Moreover, this example shows that the spherical Aluthge transform $\widehat{W}_{(\alpha,\beta )}$ may be hyponormal even if $W_{(\alpha,\beta )}$ is not. 

\begin{example}
\label{2 atomic}  For $0<x,y<1$, let $W_{(\alpha ,\beta )}$ be the $2$-variable weighted shift in Figure \ref{Figure 2ex}(ii), where $\omega_0=\omega_1=\omega_2=\cdots:=1$, $x_{0}:=x$, $a:=y$. \ Then \newline
(i) \ $\ W_{(\alpha ,\beta )}$ is subnormal $\Longleftrightarrow x \le s(y):=\sqrt{\frac{1}{2-y^2}}$; \newline
(ii) \ $\ W_{(\alpha ,\beta )}$ is hyponormal $\Longleftrightarrow x\le h(y):=\sqrt{\frac{%
1+y^{2}}{2}}$; \newline
(iii) \ $\ \widetilde{W}_{(\alpha ,\beta )}$ is hyponormal $\Longleftrightarrow x \le CA(y):=\frac{1+y}{2}$; \newline
(iv) \ $\widehat{W}_{(\alpha ,\beta )}$ is hyponormal $\Longleftrightarrow x\le
PA(y):=\frac{2\left( 1+y^{2}-y^{4}\right) }{\left( 1+\sqrt{2}\right) \left(
1+y^{2}\right) \left( \sqrt{1+y^{2}}-y^{2}\right) }$.\newline
Clearly, $s(y) \le h(y) \le PA(y)$ and $CA(y) < h(y)$ for all $0 < y <1$, while $CA(y) < s(y)$ on $(0,q)$ and $CA(y) > s(y)$ on $(q,1)$, where $q \cong 0.52138$. \ Then $W_{(\alpha ,\beta )}$ is hyponormal but $\widetilde{W}%
_{(\alpha ,\beta )}$ is not hyponormal if $0<CA(y)<x\le h(y)$, and $\widehat{W}_{(\alpha ,\beta )}$ is hyponormal but $%
W_{(\alpha ,\beta )}$ is not hyponormal if $0<h(y)<x\le PA(y)$.
\end{example}


\section{\label{Identical Aluthge Transforms}$2$-variable Weighted Shifts with Identical Aluthge Transforms}

We shall now characterize the class $\mathcal{A}_{TS}$ of commuting $2$-variable weighted shifts $W_{(\alpha,\beta)}$ for which the toral and spherical Aluthge transforms agree, that is, $\widetilde{W}_{(\alpha,\beta)}=\widehat{W}_{(\alpha,\beta)}$. \  Using Lemmas \ref{CartAlu} and \ref{PolarAlu}, it suffices to restrict attention to the equalities

$$
\sqrt{\alpha_{\mathbf{k}}\alpha_{\mathbf{k}+\mathbf{\varepsilon}_{1}}}=\alpha_{\mathbf{k}} \frac{(\alpha_{\mathbf{k}+\mathbf{\epsilon}_1}^2+\beta_{\mathbf{k}+\mathbf{\epsilon}_1}^2)^{1/4}}{(\alpha_{\mathbf{k}}^2+\beta_{\mathbf{k}}^2)^{1/4}}
$$
and
$$
\sqrt{\beta_{\mathbf{k}}\beta_{\mathbf{k}+\mathbf{\varepsilon}_{2}}}=\beta_{\mathbf{k}} \frac{(\alpha_{\mathbf{k}+\mathbf{\epsilon}_2}^2+\beta_{\mathbf{k}+\mathbf{\epsilon}_2}^2)^{1/4}}{(\alpha_{\mathbf{k}}^2+\beta_{\mathbf{k}}^2)^{1/4}}
$$
for all $\mathbf{k} \in \mathbb{Z}_+^2$. \ Thus, we easily see that $\widetilde{W}_{(\alpha,\beta)}=\widehat{W}_{(\alpha,\beta)}$ if and only if
$$
\alpha_{\mathbf{k}+\mathbf{\varepsilon}_{1}}^2 (\alpha_{\mathbf{k}}^2+\beta_{\mathbf{k}}^2)=\alpha_{\mathbf{k}}^2(\alpha_{\mathbf{k}+\mathbf{\epsilon}_1}^2+\beta_{\mathbf{k}+\mathbf{\epsilon}_1}^2)
$$
and
$$
\beta_{\mathbf{k}+\mathbf{\varepsilon}_{2}}^2 (\alpha_{\mathbf{k}}^2+\beta_{\mathbf{k}}^2)=\beta_{\mathbf{k}}^2(\alpha_{\mathbf{k}+\mathbf{\epsilon}_2}^2+\beta_{\mathbf{k}+\mathbf{\epsilon}_2}^2)
$$
for all $\mathbf{k} \in \mathbb{Z}_+^2$,
which is equivalent to
$$
\alpha_{\mathbf{k}+\mathbf{\varepsilon}_{1}}\beta_{\mathbf{k}}=\alpha_{\mathbf{k}}\beta_{\mathbf{k}+\mathbf{\varepsilon}_{1}}
$$
and
$$
\beta_{\mathbf{k}+\mathbf{\varepsilon}_{2}}\alpha_{\mathbf{k}}=\beta_{\mathbf{k}}\alpha_{\mathbf{k}+\mathbf{\varepsilon}_{2}}
$$
for all
$\mathbf{k} \in \mathbb{Z}_+^2$. \ If we now recall condition (\ref{commuting}) for the commutativity of $W_{(\alpha,\beta)}$, that is, $\alpha_{\mathbf{k}}\beta_{\mathbf{k}+\mathbf{\epsilon_1}}=\beta_{\mathbf{k}}\alpha_{\mathbf{k}+\mathbf{\epsilon_2}}$ for all $\mathbf{k} \in \mathbb{Z}_+^2$, we see at once that $\widetilde{W}_{(\alpha,\beta)}=\widehat{W}_{(\alpha,\beta)}$ if and only if
$\alpha_{\mathbf{k}+\mathbf{\epsilon_1}}=\alpha_{\mathbf{k}+\mathbf{\epsilon_2}}$ and $\beta_{\mathbf{k}+\mathbf{\epsilon_2}}=\beta_{\mathbf{k}+\mathbf{\epsilon_1}}$ for all
$\mathbf{k} \in \mathbb{Z}_+^2$. \ It follows that the weight diagram for $W_{(\alpha,\beta)}$ is completely determined by the zeroth row and the weight $\beta_{(0,0)}$. \ For, referring to Figure \ref{Figure 1}(i), once we have $\alpha_{(0,0)}$ and $\alpha_{(1,0)}$, we immediately get $\alpha_{(0,1)} (=\alpha_{(1,0)}$). \ With $\alpha_{(0,0)}$ and $\alpha_{(0,1)}$ known, we use commutativity and $\beta_{(0,0)}$ to calculate $\beta_{(1,0)}$. \ Since $\beta_{(0,1)} = \beta_{(1,0)}$ and $\alpha_{(0,2)}=\alpha_{(1,1)}=\alpha_{(2,0)}$, we can then calculate $\beta_{(1,1)}$ and $\beta_{(2,0)}$. \ A similar reasoning yields all remaining $\alpha_{\mathbf{k}}$'s and $\beta_{\mathbf{k}}$'s.

We will now show that, for the purpose of establishing the invariance of $k$-hyponormality under the Aluthge transform for the class $\mathcal{A}_{TS}$, it is enough to assume that $\beta_{(0,0)}=\alpha_{(0,0)}$. \ This is an immediate consequence of the following well known result.

\begin{lemma} \label{lem51}
Let $T$ be a bounded linear operator on Hilbert space, and let $T\equiv VP$ be its polar decomposition. \ Let $a \equiv |a|e^{i \theta}$ be a complex number written in polar form, and define $T_a:=aT$. \ Then, the polar decomposition of $T_a$ is $(e^{i\theta}V)(|a|P)$. \ As a consequence, $\widetilde{T_a}=a\widetilde{T}$.
\end{lemma}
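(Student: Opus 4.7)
The plan is a direct verification from the definition of the polar decomposition, followed by substitution into the definition of the Aluthge transform; there is essentially no obstacle, but the bookkeeping is worth writing out carefully.

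First, I would compute $T_a^*T_a = \bar a \, a \, T^*T = |a|^2 T^*T$, which immediately gives $|T_a| = \sqrt{T_a^*T_a} = |a|\,P$ and so identifies the positive factor in the polar decomposition of $T_a$.

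Next, writing $a=|a|e^{i\theta}$ and substituting into $T_a = aT = aVP$, I would factor as
\[
T_a = (e^{i\theta}V)(|a|P)
\]
and verify that $e^{i\theta}V$ is a partial isometry with the correct initial space: since multiplication by a unimodular scalar does not alter the kernel, $\ker(e^{i\theta}V)=\ker V=\ker P=\ker(|a|P)$, matching the standard convention $\ker V_{T_a}=\ker T_a$. The degenerate case $a=0$ is trivial.

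Finally, for the Aluthge identity I would substitute $|T_a|^{1/2}=|a|^{1/2}P^{1/2}$ and the partial isometry $e^{i\theta}V$ into the definition
\[
\widetilde{T_a} := |T_a|^{1/2}(e^{i\theta}V)|T_a|^{1/2} = |a|^{1/2}e^{i\theta}|a|^{1/2}\,P^{1/2}VP^{1/2} = a\,\widetilde{T}.
\]
The two scalars $|a|^{1/2}$ and $e^{i\theta}$ combine into $a$, leaving $P^{1/2}VP^{1/2}=\widetilde{T}$. The ``hard part,'' if any, is merely remembering that the kernel condition in the polar decomposition is preserved under multiplication by a nonzero scalar, so that the factorization displayed above is indeed \emph{the} polar decomposition of $T_a$ and not just \emph{a} factorization of the required form.
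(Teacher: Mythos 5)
Your verification is correct and complete: the identification $|T_a|=|a|P$, the factorization $T_a=(e^{i\theta}V)(|a|P)$ with the kernel condition checked, and the substitution into the Aluthge transform are exactly the standard argument. The paper states this lemma as well known and omits the proof entirely, so your write-up supplies precisely the intended (and only natural) justification.
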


\begin{remark}
By Lemma \ref{lem51}, to study the toral Aluthge transform of $W_{(\alpha,\beta)} \equiv (T_1,T_2)\in \mathcal{A}_{TS}$ we can multiply $T_2$ by the factor
$\frac{\alpha_{(0,0)}}{\beta_{(0,0)}}$. \ This results in a new $2$-variable weighted shift for which $\alpha_{\mathbf{k}}=\beta_{\mathbf{k}}$ for all $\mathbf{k} \in \mathbb{Z}_+^2$. \ This subclass of $\mathcal{A}_{TS}$ is the central subject of the next section. \ Observe that, while the natural generalization of Lemma \ref{lem51} is not true for the spherical Aluthge transform, it is true when restricted to $\mathcal{A}_{TS}$, since both the toral and spherical Aluthge transforms agree on this class. \qed
\end{remark}


\section{\label{Sect3-1}When is Hyponormality Invariant Under the Toral and Spherical Aluthge Transforms?}

In this section we identify a large class of $2$-variable weighted shifts for which the toral ans spherical Aluthge transforms do preserve hyponormality. \ This is in some sense optimal, since we know that $k$-hyponormality ($k \ge 2$) is not preserved by the $1$-variable Aluthge transform \cite{LLY}, as mentioned in the Introduction. \ Since this class is actually a subclass of $\mathcal{A}_{TS}$ (introduced in Section \ref{Identical Aluthge Transforms}), it follows at once that all the results we establish for the toral Aluthge transform are also true for the spherical Aluthge transform.

We start with some definitions. \ Recall that the core $c(W_{(\alpha ,\beta )})$
of $W_{(\alpha ,\beta )}$ is the restriction of $W_{(\alpha ,\beta )}$ to
the invariant subspace $\mathcal{M} \cap \mathcal{N}$. $\ W_{(\alpha
,\beta )}$ is said to be of\textit{\ tensor form} if it is of the form $%
(I \otimes W_{\sigma },W_{\tau } \otimes I)$ for some unilateral weighted
shifts $W_{\sigma }$ and $W_{\tau }$. \ Consider $\Theta \left( W_{\omega
}\right) \equiv W_{(\alpha ,\beta )}$ on $\ell ^{2}(\mathbb{Z}_{+}^{2})$
given by the double-indexed weight sequences $\alpha _{(k_{1},k_{2})}=\beta
_{(k_{1},k_{2})}:=\omega _{k_{1}+k_{2}}$ for $k_{1},k_{2}\geq 0$. \ It is clear that $\Theta \left( W_{\omega }\right) $ is a commuting pair, and we refer to it as a $2$-variable
weighted shift with \textit{diagonal core} \cite{CLY7}. \ This $2$-variable weighted shift can be represented by the weight diagram in Figure \ref{Figure 6}(i)). \ It is straightforward to observe that the class of shifts of the form $\Theta \left( W_{\omega }\right) $ is simply $\mathcal{A}_{\mathcal{TS}}$ with the extra condition $\beta_{(0,0)}=\alpha_{(0,0)}$. \ (For more on these shifts the reader is referred to \cite{CLY7}). \ Now, we show that the $k$-hyponormality of $W_{\omega }$ implies the $k$%
-hyponormality of $\Theta \left( W_{\omega }\right) $. \ For this, we
present a simple criterion to detect the $k$-hyponormality of weighted
shifts.

\begin{lemma}
(\cite{Cu2})\label{k-hyponormal} \ Let $W_{\alpha }e_{i}=\alpha _{i}e_{i+1}$
$(i\geq 0)$ be a hyponormal weighted shift, and let $k\geq 1$. \ The
following statements are equivalent:\newline
(i) $\ W_{\alpha }$ is $k$-hyponormal;\newline
(ii) \ The matrix
\begin{equation*}
(([W_{\alpha }^{\ast j},W_{\alpha }^{i}]e_{u+j},e_{u+i}))_{i,j=1}^{k}
\end{equation*}%
is positive semi-definite for all $u\geq -1$;\newline
(iii) \ The Hankel matrix
\begin{equation*}
H(k;u)\left( W_{\alpha }\right) :=(\gamma _{u+i+j-2})_{i,j=1}^{k+1}
\end{equation*}%
is positive semi-definite for all $u\geq 0$.
\end{lemma}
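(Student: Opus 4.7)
My plan is to prove the two equivalences $(i)\Leftrightarrow(ii)$ and $(ii)\Leftrightarrow(iii)$ separately, leveraging the fact that commutators of powers of a weighted shift act on the canonical basis with a transparent combinatorial structure.

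For $(i)\Leftrightarrow(ii)$: by definition, $W_\alpha$ is $k$-hyponormal if and only if the tuple $(I, W_\alpha, \ldots, W_\alpha^k)$ is jointly hyponormal, so I would write out the $(k+1)\times(k+1)$ operator matrix $([T_j^*, T_i])_{i,j=0}^k$ with $T_0 := I$ and $T_i := W_\alpha^i$ for $i \geq 1$. Since $[I, \cdot]=0$, the row and column indexed by $0$ vanish, and positivity reduces to positivity of $([W_\alpha^{*j}, W_\alpha^i])_{i,j=1}^k$ on $\bigoplus_{i=1}^k \ell^2(\mathbb{Z}_+)$. A direct computation on basis vectors shows that $[W_\alpha^{*j}, W_\alpha^i]e_m$ is a scalar multiple of $e_{m+i-j}$ (vanishing when any intermediate index is negative). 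Hence the operator matrix decomposes as an orthogonal direct sum of scalar $k\times k$ matrices indexed by the \emph{track} $u := m-j = n-i$, valid precisely for $u \geq -1$, and each such scalar matrix is exactly the one appearing in $(ii)$.

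For $(ii)\Leftrightarrow(iii)$: I would compute the entries explicitly. For $u \geq 0$,
\[
\langle [W_\alpha^{*j}, W_\alpha^i] e_{u+j}, e_{u+i}\rangle \;=\; \frac{\gamma_{u+i+j}}{\sqrt{\gamma_{u+i}\gamma_{u+j}}} \;-\; \frac{\sqrt{\gamma_{u+i}\gamma_{u+j}}}{\gamma_u}.
\]
After the positive diagonal congruence $\mathrm{diag}(\sqrt{\gamma_{u+1}},\ldots,\sqrt{\gamma_{u+k}})$ applied on both sides and multiplication by $\gamma_u>0$, this matrix becomes $(\gamma_u\gamma_{u+i+j}-\gamma_{u+i}\gamma_{u+j})_{i,j=1}^k$, which is precisely $\gamma_u$ times the Schur complement of the $(1,1)$ entry in the $(k+1)\times(k+1)$ Hankel matrix $H(k;u)$. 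Since $\gamma_u>0$, standard Schur complement theory then gives $(ii)_u \Leftrightarrow H(k;u)\geq 0$. For the border case $u=-1$, the second term in the commutator inner product drops out because $W_\alpha^{*j}e_{j-1}=0$, leaving a matrix congruent to $H(k-1;1)$; its positivity is a consequence of $H(k;1)\geq 0$ by passing to the leading $k\times k$ principal submatrix, so $(iii)$ already forces $(ii)$ at $u=-1$.

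The main obstacle is bookkeeping rather than conceptual: one must carefully track the shifted indices $u+i$, $u+j$, $u+i+j$ throughout, and treat separately the edge case $u=-1$ where one arm of the commutator vanishes identically. One should also verify that positivity of the operator matrix on the dense subspace of finitely supported sequences—and hence track-by-track in the direct sum decomposition—is genuinely equivalent to the stated operator positivity, but this is a routine density argument given that the off-diagonal action of $[W_\alpha^{*j},W_\alpha^i]$ preserves the tracks.
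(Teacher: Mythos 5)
The paper offers no proof of this lemma --- it is quoted from \cite{Cu2} --- so there is nothing in-text to compare against; I will assess your argument on its own. Your strategy (track decomposition for (i)$\Leftrightarrow$(ii), Schur complements for (ii)$\Leftrightarrow$(iii)) is the standard one, and most of it checks out: the entry formula $\frac{\gamma_{u+i+j}}{\sqrt{\gamma_{u+i}\gamma_{u+j}}}-\frac{\sqrt{\gamma_{u+i}\gamma_{u+j}}}{\gamma_u}$ is correct for $u\ge 0$, the diagonal congruence producing $(\gamma_u\gamma_{u+i+j}-\gamma_{u+i}\gamma_{u+j})_{i,j=1}^k$ is indeed $\gamma_u$ times the Schur complement of the $(1,1)$ entry of $H(k;u)$, and your treatment of the border case $u=-1$ (one arm of the commutator dies, leaving a matrix congruent to a leading principal submatrix of $H(k;1)$) is right.

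The genuine gap is the clause ``valid precisely for $u\ge -1$.'' A basis vector $e_m$ sitting in the $j$-th summand of $\bigoplus_{j=1}^k\ell^2(\mathbb{Z}_+)$ lies on track $u=m-j$, and since $1\le j\le k$ and $m\ge 0$, the tracks run all the way down to $u=-k$, not to $u=-1$. For $-k\le u\le -2$ (these occur as soon as $k\ge 2$) the block of the commutator matrix on track $u$ is the \emph{smaller} matrix $\bigl(\langle[W_\alpha^{\ast j},W_\alpha^{i}]e_{u+j},e_{u+i}\rangle\bigr)_{i,j=-u}^{k}$, which is not among the matrices listed in condition (ii); consequently your implication (ii)$\Rightarrow$(i) is incomplete as written. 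The repair is cheap and uses only what you already set up: on any track $u\le -1$ the term $W_\alpha^{i}W_\alpha^{\ast j}e_{u+j}$ vanishes, so the block is congruent via $\mathrm{diag}(\sqrt{\gamma_{u+i}})$ to $(\gamma_{u+i+j})_{i,j=-u}^{k}=(\gamma_{-u+i'+j'})_{i',j'=0}^{k+u}$, i.e.\ to the leading principal $(k+u+1)\times(k+u+1)$ submatrix of $H(k;-u)$, hence positive whenever (iii) holds. So the clean logical circuit is (i)$\Rightarrow$(ii)$\Rightarrow$(iii)$\Rightarrow$(i), with the last arrow absorbing all negative tracks, rather than proving (i)$\Leftrightarrow$(ii) in isolation.
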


\setlength{\unitlength}{1mm} \psset{unit=1mm}
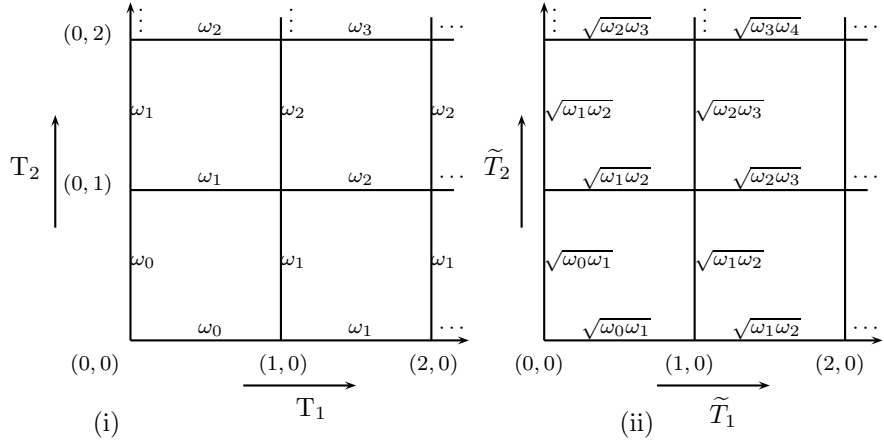
\begin{figure}[th]
\begin{center}
\begin{picture}(135,70)

\psline{->}(20,20)(65,20)
\psline(20,40)(63,40)
\psline(20,60)(63,60)
\psline{->}(20,20)(20,65)
\psline(40,20)(40,63)
\psline(60,20)(60,63)

\put(12,16){\footnotesize{$(0,0)$}}
\put(37,16){\footnotesize{$(1,0)$}}
\put(57,16){\footnotesize{$(2,0)$}}

\put(29,21){\footnotesize{$\omega_{0}$}}
\put(49,21){\footnotesize{$\omega_{1}$}}
\put(61,21){\footnotesize{$\cdots$}}

\put(29,41){\footnotesize{$\omega_{1}$}}
\put(49,41){\footnotesize{$\omega_{2}$}}
\put(61,41){\footnotesize{$\cdots$}}

\put(29,61){\footnotesize{$\omega_{2}$}}
\put(49,61){\footnotesize{$\omega_{3}$}}
\put(61,61){\footnotesize{$\cdots$}}

\psline{->}(35,14)(50,14)
\put(42,10){$\rm{T}_1$}
\psline{->}(10,35)(10,50)
\put(4,42){$\rm{T}_2$}

\put(11,40){\footnotesize{$(0,1)$}}
\put(11,60){\footnotesize{$(0,2)$}}

\put(20,30){\footnotesize{$\omega_{0}$}}
\put(20,50){\footnotesize{$\omega_{1}$}}
\put(21,61){\footnotesize{$\vdots$}}

\put(40,30){\footnotesize{$\omega_{1}$}}
\put(40,50){\footnotesize{$\omega_{2}$}}
\put(41,61){\footnotesize{$\vdots$}}

\put(60,30){\footnotesize{$\omega_{1}$}}
\put(60,50){\footnotesize{$\omega_{2}$}}

\put(15,8){(i)}


\put(85,8){(ii)}

\psline{->}(90,14)(105,14)
\put(97,9){$\widetilde{T}_{1}$}
\psline{->}(72,35)(72,50)
\put(67,42){$\widetilde{T}_{2}$}

\psline{->}(75,20)(120,20)
\psline(75,40)(118,40)
\psline(75,60)(118,60)

\psline{->}(75,20)(75,65)
\psline(95,20)(95,63)
\psline(115,20)(115,63)

\put(71,16){\footnotesize{$(0,0)$}}
\put(91,16){\footnotesize{$(1,0)$}}
\put(111,16){\footnotesize{$(2,0)$}}

\put(80,21){\footnotesize{$\sqrt{\omega_{0}\omega_{1}}$}}
\put(100,21){\footnotesize{$\sqrt{\omega_{1}\omega_{2}}$}}
\put(116,21){\footnotesize{$\cdots$}}

\put(80,41){\footnotesize{$\sqrt{\omega_{1}\omega_{2}}$}}
\put(100,41){\footnotesize{$\sqrt{\omega_{2}\omega_{3}}$}}
\put(116,41){\footnotesize{$\cdots$}}

\put(80,61){\footnotesize{$\sqrt{\omega_{2}\omega_{3}}$}}
\put(100,61){\footnotesize{$\sqrt{\omega_{3}\omega_{4}}$}}
\put(116,61){\footnotesize{$\cdots$}}

\put(75,30){\footnotesize{$\sqrt{\omega_{0}\omega_{1}}$}}
\put(75,50){\footnotesize{$\sqrt{\omega_{1}\omega_{2}}$}}
\put(76,61){\footnotesize{$\vdots$}}

\put(95,30){\footnotesize{$\sqrt{\omega_{1}\omega_{2}}$}}
\put(95,50){\footnotesize{$\sqrt{\omega_{2}\omega_{3}}$}}
\put(96,61){\footnotesize{$\vdots$}}


\end{picture}
\end{center}
\caption{Weight diagram of a generic $2$-variable weighted shift $\Theta
\left( W_{\protect\omega }\right) \equiv \mathbf{(}T_{1},T_{2})$ and weight
diagram of the toral Aluthge transform $\widetilde{\Theta \left( W_{\protect\omega }\right)}\equiv (\widetilde{T}_{1},\widetilde{T}_{2})$ of $\Theta
\left( W_{\protect\omega }\right)$, respectively.}
\label{Figure 6}
\end{figure}

\subsection{Preservation of hyponormality}

We then have:

\begin{proposition}
\label{propscaling2}Consider $\Theta \left( W_{\omega }\right) \equiv
\mathbf{(}T_{1},T_{2})$ given by Figure \ref{Figure 6}(i). \ Then for $%
k\geq 1$
\begin{equation*}
W_{\omega }\text{ is }k\text{-hyponormal if and only if }\Theta \left(
W_{\omega }\right) \text{is } k \text{-hyponormal.}
\end{equation*}
\end{proposition}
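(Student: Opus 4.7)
The plan is to reduce the $k$-hyponormality criterion of Lemma \ref{khypo} (in two variables) to the Hankel-matrix criterion of Lemma \ref{k-hyponormal} (in one variable), by exploiting the fact that every moment of $\Theta(W_\omega)$ depends only on $k_1+k_2$.

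First I would compute the moments of $\Theta(W_\omega)$. Using $\alpha_{(j,0)} = \omega_j$ and $\beta_{(k_1,j)} = \omega_{k_1+j}$ in the moment formula from Section \ref{Sect1}, a direct cancellation gives
\begin{equation*}
\gamma_{\mathbf{k}}(\Theta(W_\omega)) \;=\; \omega_0^2\,\omega_1^2\cdots \omega_{k_1+k_2-1}^2 \;=\; \gamma_{k_1+k_2}(W_\omega),
\end{equation*}
for every $\mathbf{k}=(k_1,k_2)\in\mathbb{Z}_+^2$ (and $\gamma_{(0,0)}=1$). Consequently, the entries of the Lemma \ref{khypo} matrix satisfy
\begin{equation*}
\bigl[M_{\mathbf{u}}(k)(\Theta(W_\omega))\bigr]_{(n,m),(p,q)} \;=\; \gamma_{u_1+u_2+(n+m)+(p+q)}(W_\omega).
\end{equation*}
Let $E$ be the $(k+1)\times \binom{k+2}{2}$ matrix of $0$'s and $1$'s whose columns are indexed by pairs $(n,m)$ with $0\le n+m\le k$ and whose $(s,(n,m))$ entry equals $1$ precisely when $s=n+m$. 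Then a short computation shows
\begin{equation*}
M_{\mathbf{u}}(k)\bigl(\Theta(W_\omega)\bigr) \;=\; E^{T}\,H(k;\,u_1+u_2)(W_\omega)\,E,
\end{equation*}
where $H(k;u)(W_\omega)=(\gamma_{u+i+j-2}(W_\omega))_{i,j=1}^{k+1}$ is the Hankel matrix from Lemma \ref{k-hyponormal}.

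For the forward implication, assume $W_\omega$ is $k$-hyponormal. By Lemma \ref{k-hyponormal}, $H(k;u)(W_\omega)\ge 0$ for every $u\ge 0$. Since $u_1+u_2\ge 0$ for every $\mathbf{u}\in\mathbb{Z}_+^2$, the displayed identity gives $M_{\mathbf{u}}(k)(\Theta(W_\omega))\ge 0$ for all $\mathbf{u}$, and Lemma \ref{khypo} then yields the $k$-hyponormality of $\Theta(W_\omega)$.

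For the converse, assume $\Theta(W_\omega)$ is $k$-hyponormal. The key observation is that selecting the $k+1$ columns (and rows) of $M_{\mathbf{u}}(k)(\Theta(W_\omega))$ indexed by the pairs $(s,0)$ with $s=0,1,\ldots,k$ produces a principal submatrix which is exactly $H(k;\,u_1+u_2)(W_\omega)$; this is because the submatrix of $E$ restricted to these columns is the $(k+1)\times(k+1)$ identity matrix. Since principal submatrices of positive semi-definite matrices are positive semi-definite, $H(k;u_1+u_2)(W_\omega)\ge 0$ for every $\mathbf{u}\in\mathbb{Z}_+^2$. As $u_1+u_2$ ranges over all of $\mathbb{Z}_+$, Lemma \ref{k-hyponormal} yields the $k$-hyponormality of $W_\omega$, completing the proof. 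The only step that requires any care is bookkeeping the row/column indexing in the $E^{T}HE$ factorization; everything else is essentially forced by the identity $\gamma_{\mathbf{k}}(\Theta(W_\omega))=\gamma_{k_1+k_2}(W_\omega)$.
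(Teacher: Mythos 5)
Your proof is correct and follows essentially the same route as the paper: both reduce the positivity of $M_{\mathbf{u}}(k)\left(\Theta(W_{\omega})\right)$ to that of the Hankel matrix $H(k;u_{1}+u_{2})(W_{\omega})$ via the identity $\gamma_{\mathbf{k}}(\Theta(W_{\omega}))=\gamma_{k_{1}+k_{2}}(W_{\omega})$ --- the paper by explicitly discarding the repeated rows and columns of $M_{\mathbf{u}}(k)$, you by packaging the same redundancy as the congruence $E^{T}H E$. Your principal-submatrix argument for the converse simply makes explicit what the paper dismisses as clear from the construction.
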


\begin{proof}
$(\Longleftarrow )$ \ This is clear from the construction of $\Theta \left(
W_{\omega }\right) $ and Figure \ref{Figure 6}(i).\newline
$(\Longrightarrow )$ \ For $k\geq 1$, we suppose that $W_{\omega }$ is a $k$%
-hyponormal weighted shift. \ Then, by Lemma \ref{k-hyponormal}, for all $%
k_{1}\geq 0$, we have that the Hankel matrix%
\begin{equation*}
H(k;u)\left( W_{\omega }\right) :=(\gamma _{u+i+j-2}\left( W_{\omega
}\right) )_{i,j=1}^{k+1}\geq 0.
\end{equation*}

By Lemma \ref{khypo}, we can see that a $2$-variable weighted shift $%
W_{(\alpha ,\beta )}$ is $k$-hyponormal if and only if
\begin{equation}
M_{\mathbf{u}}(k)\left( W_{(\alpha ,\beta )}\right) =(\gamma _{\mathbf{u}%
+(m,n)+(p,q)})_{_{0\leq p+q\leq k}^{0\leq n+m\leq k}}\geq 0,  \label{k-hy}
\end{equation}%
for all $\mathbf{u}\equiv (u_{1},u_{2})\in \mathbb{Z}_{+}^{2}$. \ Thus, for $%
\Theta \left( W_{\omega }\right)$ $k$-hyponormal, it is enough to show
that $M_{\mathbf{u}}(k)\geq 0$ for all $\mathbf{u}\in \mathbb{Z}_{+}^{2}$. \
Observe that the moments associated with $\Theta \left( W_{\omega }\right) $
are
\begin{equation}
\gamma _{\mathbf{u}}(\Theta \left( W_{\omega }\right) )=\gamma
_{u_{1}+u_{2}}\left( W_{\omega }\right) \equiv \gamma _{u_{1}+u_{2}} \; (\text{all }%
\mathbf{u}\in \mathbb{Z}_{+}^{2}).  \label{moment0}
\end{equation}%
By a direct computation, we have
\[
\begin{tabular}{l}
$M_{\mathbf{u}}(k)\left( \Theta \left( W_{\omega }\right) \right) =$ \\
\\
$\left(
\begin{array}{ccccccc}
\gamma _{\mathbf{u}} & \gamma _{\mathbf{u}+\mathbf{\epsilon }_{1}} & \gamma
_{\mathbf{u}+\mathbf{\epsilon }_{2}} & \cdots  & \gamma _{\mathbf{u}+k%
\mathbf{\epsilon }_{1}} & \cdots  & \gamma _{\mathbf{u}+k\mathbf{\epsilon }%
_{2}} \\
\gamma _{\mathbf{u}+\mathbf{\epsilon }_{1}} & \gamma _{\mathbf{u}+2\mathbf{%
\epsilon }_{1}} & \gamma _{\mathbf{u}+\mathbf{\epsilon }_{1}+\mathbf{%
\epsilon }_{2}} & \cdots  & \gamma _{\mathbf{u}+(k+1)\mathbf{\epsilon }_{1}}
& \cdots  & \gamma _{\mathbf{u}+\mathbf{\epsilon }_{1}+k\mathbf{\epsilon }%
_{2}} \\
\gamma _{\mathbf{u}+\mathbf{\epsilon }_{2}} & \gamma _{\mathbf{u}+\mathbf{%
\epsilon }_{1}+\mathbf{\epsilon }_{2}} & \gamma _{\mathbf{u}+2\mathbf{%
\epsilon }_{2}} & \cdots  & \gamma _{\mathbf{u}+k\mathbf{\epsilon }_{1}+%
\mathbf{\epsilon }_{2}} & \cdots  & \gamma _{\mathbf{u}+(k+1)\mathbf{%
\epsilon }_{2}} \\
\vdots  & \vdots  & \vdots  & \ddots  & \vdots  & \ddots  & \vdots  \\
\gamma _{\mathbf{u}+k\mathbf{\epsilon }_{1}} & \gamma _{\mathbf{u}+(k+1)%
\mathbf{\epsilon }_{1}} & \gamma _{\mathbf{u}+k\mathbf{\epsilon }_{1}+%
\mathbf{\epsilon }_{2}} & \cdots  & \gamma _{\mathbf{u}+2k\mathbf{\epsilon }%
_{1}} & \cdots  & \gamma _{\mathbf{u}+k\mathbf{\epsilon }_{1}+k\mathbf{%
\epsilon }_{2}} \\
\vdots  & \vdots  & \vdots  & \ddots  & \vdots  & \ddots  & \vdots  \\
\gamma _{\mathbf{u}+k\mathbf{\epsilon }_{2}} & \gamma _{\mathbf{u}+\mathbf{%
\epsilon }_{1}+k\mathbf{\epsilon }_{2}} & \gamma _{\mathbf{u}+(k+1)\mathbf{%
\epsilon }_{2}} & \cdots  & \gamma _{\mathbf{u}+k\mathbf{\epsilon }_{1}+k%
\mathbf{\epsilon }_{2}} & \cdots  & \gamma _{\mathbf{u}+2k\mathbf{\epsilon }%
_{2}}%
\end{array}%
\right) ,$%
\end{tabular}%
\]%
which by (\ref{moment0}) equals
\[
\begin{tabular}{l}
$J_{\mathbf{u}}(k):=$ \\
\\
$\left(
\begin{array}{cccccc}
\gamma _{u_{1}+u_{2}} & \gamma _{u_{1}+u_{2}+1} & \cdots  & \gamma
_{u_{1}+u_{2}+k} & \cdots  & \gamma _{u_{1}+u_{2}+k} \\
\gamma _{u_{1}+u_{2}+1} & \gamma _{u_{1}+u_{2}+2} & \cdots  & \gamma
_{u_{1}+u_{2}+k+1} & \cdots  & \gamma _{u_{1}+u_{2}+k+1} \\
\vdots  & \vdots  & \ddots  & \vdots  & \ddots  & \vdots  \\
\gamma _{u_{1}+u_{2}+k} & \gamma _{u_{1}+u_{2}+k+1} & \cdots  & \gamma
_{u_{1}+u_{2}+2k} & \cdots  & \gamma _{u_{1}+u_{2}+2k} \\
\gamma _{u_{1}+u_{2}+k} & \gamma _{u_{1}+u_{2}+k+1} & \cdots  & \gamma
_{u_{1}+u_{2}+2k} & \cdots  & \gamma _{u_{1}+u_{2}+2k} \\
\vdots  & \vdots  & \ddots  & \vdots  & \ddots  & \vdots  \\
\gamma _{u_{1}+u_{2}+k} & \gamma _{u_{1}+u_{2}+u+1} & \ddots  & \gamma
_{u_{1}+u_{2}+2k} & \ddots  & \gamma _{u_{1}+u_{2}+2k} \\
\gamma _{u_{1}+u_{2}+k} & \gamma _{u_{1}+u_{2}+k+1} & \cdots  & \gamma
_{u_{1}+u_{2}+2k} & \cdots  & \gamma _{u_{1}+u_{2}+2k}%
\end{array}%
\right) .$%
\end{tabular}%
\]%
For $1\leq i\leq k+1$, we can observe that the%
\[
\left( \frac{i(i+1)}{2}+1\right) ^{th},\left( \frac{i(i+1)}{2}+2\right)
^{th},\cdots ,\left( \frac{i(i+1)}{2}+(i+1)\right) ^{th}
\]%
rows and columns of $J_{\mathbf{u}}(k)$ are equal. \ Thus, a direct
calculation (i.e., discarding some redundant rows and columns in the matrix $%
J_{\mathbf{u}}(k)$) shows that
\begin{equation}
\begin{tabular}{l}
$J_{\mathbf{u}}(k)\geq 0\iff L_{\mathbf{u}}(k)\geq 0$,%
\end{tabular}
\label{condition02}
\end{equation}%
where%
\[
L_{\mathbf{u}}(k):=\left(
\begin{array}{cccc}
\gamma _{u_{1}+u_{2}}\left( W_{\omega }\right)  & \gamma
_{u_{1}+u_{2}+1}\left( W_{\omega }\right)  & \cdots  & \gamma
_{u_{1}+u_{2}+k}\left( W_{\omega }\right)  \\
\gamma _{u_{1}+u_{2}+1}\left( W_{\omega }\right)  & \gamma
_{u_{1}+u_{2}+2}\left( W_{\omega }\right)  & \cdots  & \gamma
_{u_{1}+u_{2}+k+1}\left( W_{\omega }\right)  \\
\vdots  & \vdots  & \ddots  & \vdots  \\
\gamma _{u_{1}+u_{2}+k}\left( W_{\omega }\right)  & \gamma
_{u_{1}+u_{2}+k+1}\left( W_{\omega }\right)  & \cdots  & \gamma
_{u_{1}+u_{2}+2k}\left( W_{\omega }\right)
\end{array}%
\right) .
\]

Note that%
\begin{equation}
L_{\left( u_{1},u_{2}\right) }(k)\geq 0\Longleftrightarrow
H(k;u_{1}+u_{2})\left( W_{\omega }\right) \geq 0\text{.}  \label{condition03}
\end{equation}%
Thus, if $W_{\omega }$ is $k$-hyponormal then $H(k;u)\left(
W_{\omega }\right) \ge 0$ for all $(u\geq 0)$, which a fortiori implies that $M_{\mathbf{u}}(k)\left(
W_{(\alpha ,\beta )}\right) \geq 0$ for all $\mathbf{u}\in \mathbb{Z}%
_{+}^{2} $, as desired. \ The proof is now complete.
\end{proof}

Now we have the following result.

\begin{theorem} 
\label{pre-hypo}Consider the $2$-variable weighted shift $\Theta \left( W_{\omega }\right) \equiv \mathbf{(}%
T_{1},T_{2})$ given by Figure \ref{Figure 6}(i). \ Suppose that $\Theta
\left( W_{\omega }\right) $ is hyponormal. \ Then, the toral Aluthge transform $%
\widetilde{\Theta \left( W_{\omega }\right)} \equiv \Theta \left( \widetilde{%
W}_{\omega }\right) $ is also hyponormal.
\end{theorem}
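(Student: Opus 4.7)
The plan is to reduce the two-variable statement to the classical one-variable fact, using Proposition \ref{propscaling2} (specialized to $k=1$) as the bridge. The key observation, which makes everything work, is that the toral Aluthge transform of a shift of the form $\Theta(W_\omega)$ is again a shift of this form, with the underlying one-variable weight sequence replaced by its own Aluthge transform.

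First, I would apply Lemma \ref{CartAlu} to the shift $\Theta(W_\omega)$, for which $\alpha_{(k_1,k_2)}=\beta_{(k_1,k_2)}=\omega_{k_1+k_2}$. A direct substitution gives
\begin{equation*}
\widetilde{T}_1 e_{(k_1,k_2)} = \sqrt{\omega_{k_1+k_2}\,\omega_{k_1+k_2+1}}\; e_{(k_1+1,k_2)},
\end{equation*}
\begin{equation*}
\widetilde{T}_2 e_{(k_1,k_2)} = \sqrt{\omega_{k_1+k_2}\,\omega_{k_1+k_2+1}}\; e_{(k_1,k_2+1)},
\end{equation*}
and since these weights depend only on $k_1+k_2$ and agree in both directions, one recognizes $\widetilde{\Theta(W_\omega)} = \Theta(\widetilde{W}_\omega)$, where $\widetilde{W}_\omega$ is the one-variable Aluthge transform of $W_\omega$ with weights $\sqrt{\omega_n\omega_{n+1}}$ (cf.\ (\ref{Alu-hypo})). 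In particular, the toral commutativity condition (\ref{prop1eq}) is satisfied trivially here, so $\widetilde{\Theta(W_\omega)}$ is genuinely a commuting $2$-variable weighted shift.

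Next, I would apply Proposition \ref{propscaling2} (with $k=1$) in both directions. In one direction, the hypothesis that $\Theta(W_\omega)$ is hyponormal yields that $W_\omega$ is hyponormal, i.e. $\omega_0 \le \omega_1 \le \omega_2 \le \cdots$. From this, $\omega_n\omega_{n+1} \le \omega_{n+1}\omega_{n+2}$ for every $n\ge 0$, hence $\sqrt{\omega_n\omega_{n+1}}$ is a nondecreasing sequence; this is precisely the statement that the one-variable shift $\widetilde{W}_\omega$ is hyponormal (the classical one-variable invariance recalled in (\ref{Alu-hypo})). Applying Proposition \ref{propscaling2} in the other direction to $\widetilde{W}_\omega$, one concludes that $\Theta(\widetilde{W}_\omega)$ is hyponormal, which by the identification above is $\widetilde{\Theta(W_\omega)}$.

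There is no real obstacle in this argument; the entire content is in the identification $\widetilde{\Theta(W_\omega)} = \Theta(\widetilde{W}_\omega)$, after which the theorem follows from two applications of Proposition \ref{propscaling2} sandwiching the trivial one-variable fact. I would remark that the same reasoning, using Proposition \ref{propscaling2} for arbitrary $k\ge 1$, shows that the toral Aluthge transform preserves $k$-hyponormality on the class of shifts $\Theta(W_\omega)$ precisely when the one-variable Aluthge transform preserves $k$-hyponormality on $W_\omega$; by \cite{LLY}, this is not a preservation statement for $k\ge 2$ in general, which is exactly why the theorem is stated only for $k=1$.
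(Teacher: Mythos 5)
Your proposal is correct and follows essentially the same route as the paper: Proposition \ref{propscaling2} is applied in both directions, sandwiching the elementary one-variable fact that $\omega_n\le\omega_{n+1}$ implies $\sqrt{\omega_n\omega_{n+1}}\le\sqrt{\omega_{n+1}\omega_{n+2}}$. The only difference is that you explicitly verify the identification $\widetilde{\Theta(W_\omega)}=\Theta(\widetilde{W}_\omega)$ via Lemma \ref{CartAlu}, which the paper takes for granted from the theorem statement and Figure \ref{Figure 6}(ii) — a worthwhile bit of added care, but not a different argument.
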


In view of Lemma \ref{lem51}, we immediately get

\begin{corollary} \ The conclusion of Theorem \ref{pre-hypo} holds in the class $\mathcal{A}_{\mathcal{TS}}$.
\end{corollary}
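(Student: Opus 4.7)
The plan is to factor the proof through the $1$-variable case by using two observations: that the toral Aluthge transform ``commutes'' with the $\Theta$ construction, and that Proposition \ref{propscaling2} transfers hyponormality back and forth between $W_\omega$ and $\Theta(W_\omega)$.

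First I would use Lemma \ref{CartAlu} to write down the weights of $\widetilde{\Theta(W_\omega)}$ explicitly. Since $\alpha_{(k_1,k_2)}=\beta_{(k_1,k_2)}=\omega_{k_1+k_2}$, we get
\begin{equation*}
\widetilde{\alpha}_{(k_1,k_2)}=\sqrt{\alpha_{(k_1,k_2)}\alpha_{(k_1+1,k_2)}}=\sqrt{\omega_{k_1+k_2}\,\omega_{k_1+k_2+1}},
\end{equation*}
and the analogous identity for $\widetilde{\beta}_{(k_1,k_2)}$. These two quantities coincide and depend only on $k_1+k_2$. Comparing with (\ref{Alu-hypo}), which gives the weights of $\widetilde{W}_\omega$ as $\sqrt{\omega_n\omega_{n+1}}$, we see at once that
\begin{equation*}
\widetilde{\Theta(W_\omega)}=\Theta(\widetilde{W}_\omega),
\end{equation*}
which justifies the notation in the statement. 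In particular, $\widetilde{\Theta(W_\omega)}$ is again a $2$-variable weighted shift of the form $\Theta(\,\cdot\,)$, so Proposition \ref{propscaling2} is available for it as well.

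Next I would apply Proposition \ref{propscaling2} with $k=1$ in both directions. Assuming $\Theta(W_\omega)$ is hyponormal, the proposition gives that $W_\omega$ is hyponormal. By the classical $1$-variable observation recorded in the Introduction (a weighted shift is hyponormal iff its weight sequence is nondecreasing, and $\sqrt{\omega_n\omega_{n+1}}$ inherits this monotonicity), $\widetilde{W}_\omega$ is then hyponormal. Applying Proposition \ref{propscaling2} again in the other direction to the shift $\widetilde{W}_\omega$, we conclude that $\Theta(\widetilde{W}_\omega)$ is hyponormal, which by the identity above is precisely $\widetilde{\Theta(W_\omega)}$.

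There is no real obstacle, since all the heavy lifting (the characterization of $k$-hyponormality for $\Theta(W_\omega)$ and the $1$-variable invariance under Aluthge) is already in place; the only content is the weight computation showing $\widetilde{\Theta(W_\omega)}=\Theta(\widetilde{W}_\omega)$, after which Proposition \ref{propscaling2} reduces the problem to the well-known $1$-variable fact. The corollary extending the result to $\mathcal{A}_{\mathcal{TS}}$ is then immediate from Lemma \ref{lem51}: given any $W_{(\alpha,\beta)}\in\mathcal{A}_{\mathcal{TS}}$, the rescaling $(T_1,(\alpha_{(0,0)}/\beta_{(0,0)})T_2)$ falls into the $\Theta$-class, while the Aluthge transform is equivariant under positive scaling of the components, so hyponormality of the transform is preserved.
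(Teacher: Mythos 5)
Your argument is correct and follows the paper's route exactly: the weight identity $\widetilde{\Theta(W_\omega)}=\Theta(\widetilde{W}_\omega)$ together with Proposition \ref{propscaling2} reduces Theorem \ref{pre-hypo} to the $1$-variable monotonicity fact, and the passage to all of $\mathcal{A}_{\mathcal{TS}}$ via the rescaling of $T_2$ and Lemma \ref{lem51} is precisely the paper's one-line justification of the corollary. The only point worth making explicit in your final step is that you are also invoking Proposition \ref{propscaling} (scaling invariance of joint hyponormality) to pass from hyponormality of $(\widetilde{T}_1,c\,\widetilde{T}_2)$ back to that of $(\widetilde{T}_1,\widetilde{T}_2)$.
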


\begin{proof}[Proof of Theorem \ref{pre-hypo}] \ 
Since $\Theta \left( W_{\omega }\right) $ is hyponormal, by Proposition \ref%
{propscaling2}, $W_{\omega }$ is hyponormal. \ Thus, we have that for any
integer $n\geq 0$, $\omega _{n}\leq \omega _{n+1}\Longrightarrow \sqrt{%
\omega _{n}\omega _{n+1}}\leq \sqrt{\omega _{n+1}\omega _{n+2}}$, which
implies that $\widetilde{W}_{\omega }$ is also hyponormal. \ By
Proposition \ref{propscaling2}, $\widetilde{\Theta \left( W_{\omega
}\right)} $ is hyponormal, as desired.
\end{proof}

\begin{remark}
(i) \ We construct an example $\Theta \left( W_{\omega }\right) $ such that $%
\Theta \left( W_{\omega }\right) $ is not hyponormal, but the Aluthge
transform $\widetilde{\Theta \left( W_{\omega }\right)} $ of $\Theta \left(
W_{\omega }\right) $ is hyponormal. \ Consider the unilateral weighted shift introduced in Section \ref{Int}, that is, $W_{\omega }\equiv \mathrm{%
shift}\left( \frac{1}{2},2,\frac{1}{2},2,\frac{1}{2},2,\cdots \right)$. \ $W_{\omega }$ is not hyponormal, but the
Aluthge transform $\widetilde{W}_{\omega }=U_{+}$ is subnormal. \ Thus, by
Proposition \ref{propscaling2}, we have that $\Theta \left( W_{\omega
}\right) $ is not hyponormal, but $\widetilde{\Theta \left( W_{\omega
}\right)} $ is hyponormal, as desired. \newline
(ii) \ Using an argument entirely similar to that in (i) above, one can show that $2$-hyponor-mality is not preserved by the toral or spherical Aluthge transform (as in the single variable case). \qed
\end{remark}

We can easily observe that if $W_{(\alpha ,\beta )}$ is of tensor form, that is, $(I\otimes W_{\sigma
},W_{\tau }\otimes I)$, then its toral Aluthge transform $\widetilde{W}_{(\alpha ,\beta )}$ is also of tensor form; however, the spherical
Aluthge transform $\widetilde{W}_{(\alpha ,\beta )}$ is in general not of tensor form. \ In any event, hyponormality is invariant under both
Aluthge transforms when $W_{(\alpha ,\beta )}$ is of tensor form. \ That the toral Aluthge transform preserves hyponormality for these $2$-variable weighted shifts is clear; we now establish invariance of hyponormality for the spherical Aluthge transform. \ Recall first that, by Proposition \ref{Proposition1}, the spherical Aluthge transform is commuting.  

\begin{proposition}
\label{tensor} Let $W_{(\alpha ,\beta )}$ be a $2$%
-variable weighted shift of \textit{tensor form} $(I\otimes W_{\sigma
},W_{\tau }\otimes I)$, and assume that $W_{\sigma}$ and $W_{\tau}$ are hyponormal. \ Then $\widehat{W}_{(\alpha ,\beta )}$ is hyponormal.
\end{proposition}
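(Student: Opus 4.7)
\medskip

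\noindent\textbf{Proof strategy for Proposition \ref{tensor}.} \ My plan is a direct verification of joint hyponormality at the level of the moment matrices from Lemma \ref{khypo}, exploiting the fact that for tensor form one has $\alpha_{\mathbf{k}}=\sigma_{k_1}$ and $\beta_{\mathbf{k}}=\tau_{k_2}$. \ Substituting these into Lemma \ref{PolarAlu} and telescoping the resulting products of fourth roots, one obtains the explicit moment formula
\begin{equation*}
\gamma_{\mathbf{k}}(\widehat{W}_{(\alpha,\beta)})
=\gamma_{k_1}(W_\sigma)\,\gamma_{k_2}(W_\tau)\,
\sqrt{\frac{\sigma_{k_1}^2+\tau_{k_2}^2}{\sigma_0^2+\tau_0^2}}.
\end{equation*}
By Lemma \ref{khypo} (with $k=1$), joint hyponormality is equivalent to $M_{\mathbf{u}}(1)(\widehat{W}_{(\alpha,\beta)})\geq 0$ for every $\mathbf{u}\in\mathbb{Z}_+^2$, so it suffices to show that the $3\times 3$ matrix whose $((n,m),(p,q))$ entry is $\gamma_{\mathbf{u}+(n+p,m+q)}(\widehat{W}_{(\alpha,\beta)})$ is positive semi-definite at each $\mathbf{u}$.

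The plan is then to apply a Schur complement argument with respect to the $(1,1)$ entry. \ The two diagonal entries of the resulting $2\times 2$ Schur complement are, up to positive constants, precisely the quantities whose nonnegativity records the individual hyponormality of $\widehat{T}_1$ and $\widehat{T}_2$ along rows/columns; these are nonneg by Proposition \ref{Proposition1}, since $W_\sigma,W_\tau$ hyponormal implies $T_1,T_2$ commuting hyponormal. \ The off-diagonal entry is (up to a positive factor)
\begin{equation*}
\sqrt{(\sigma_{u_1+1}^2+\tau_{u_2+1}^2)(\sigma_{u_1}^2+\tau_{u_2}^2)}
-\sqrt{(\sigma_{u_1+1}^2+\tau_{u_2}^2)(\sigma_{u_1}^2+\tau_{u_2+1}^2)},
\end{equation*}
which is $\le 0$ in view of the elementary identity $(B+b)(A+a)-(B+a)(A+b)=-(B-A)(b-a)$ together with the hyponormality inequalities $\sigma_{u_1}^2\le\sigma_{u_1+1}^2$ and $\tau_{u_2}^2\le\tau_{u_2+1}^2$.

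The remaining step, and the heart of the argument, is the $2\times 2$ determinant inequality for the Schur complement; after clearing positive denominators and squaring, it is a polynomial inequality in the six variables $\sigma_{u_1}^2,\sigma_{u_1+1}^2,\sigma_{u_1+2}^2,\tau_{u_2}^2,\tau_{u_2+1}^2,\tau_{u_2+2}^2$ subject to the two monotonicity conditions $\sigma_{u_1}^2\le\sigma_{u_1+1}^2\le\sigma_{u_1+2}^2$ and $\tau_{u_2}^2\le\tau_{u_2+1}^2\le\tau_{u_2+2}^2$. \ The main obstacle is to organize this inequality transparently; my plan is either to invoke the Cauchy--Schwarz inequality applied to the pairs $\bigl(\sqrt{B(A+a)},\sqrt{A(B+a)}\bigr)$ and their $\tau$-counterparts, or to employ the integral representation $\sqrt{z}=\tfrac{2}{\pi}\int_0^\infty \frac{z}{z+t^2}\,dt$ to separate the $\sigma$- and $\tau$-contributions and reduce the matter to the pointwise (in $t$) positivity of a Schur/Hadamard product of two moment-type matrices, each of which is manifestly nonneg by hyponormality of $W_\sigma$ and $W_\tau$. \ Once the determinant inequality is established at every $\mathbf{u}\in\mathbb{Z}_+^2$, Lemma \ref{khypo} completes the proof of the joint hyponormality of $\widehat{W}_{(\alpha,\beta)}$.
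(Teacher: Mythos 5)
Your setup is sound: the telescoped moment formula $\gamma_{\mathbf{k}}(\widehat{W}_{(\alpha,\beta)})=\gamma_{k_1}(W_\sigma)\,\gamma_{k_2}(W_\tau)\sqrt{(\sigma_{k_1}^2+\tau_{k_2}^2)/(\sigma_0^2+\tau_0^2)}$ is correct, the reduction to $M_{\mathbf{u}}(1)(\widehat{W}_{(\alpha,\beta)})\geq 0$ for all $\mathbf{u}$ is exactly Lemma \ref{khypo}, the diagonal entries of your Schur complement are indeed (up to positive factors) the one-variable hyponormality conditions for $\widehat{T}_1$ and $\widehat{T}_2$, which Proposition \ref{Proposition1} supplies, and the sign computation $(B+b)(A+a)-(B+a)(A+b)=-(B-A)(b-a)$ for the off-diagonal entry is right. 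But the argument stops precisely where the proposition stops being a corollary of Proposition \ref{Proposition1}. Writing $A=\sigma_{u_1}^2\le B=\sigma_{u_1+1}^2\le C=\sigma_{u_1+2}^2$ and $a=\tau_{u_2}^2\le b=\tau_{u_2+1}^2\le c=\tau_{u_2+2}^2$, the determinant step is the inequality
$$
\bigl[B\sqrt{(C+a)(A+a)}-A(B+a)\bigr]\bigl[b\sqrt{(A+c)(A+a)}-a(A+b)\bigr]\;\geq\;Aa\,\bigl[\sqrt{(B+b)(A+a)}-\sqrt{(B+a)(A+b)}\bigr]^2,
$$
and neither of your two proposed routes establishes it. The Cauchy--Schwarz suggestion is too vague to evaluate. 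The integral-representation route fails for a concrete reason: however you separate variables (e.g.\ $1/\sqrt{z}=\pi^{-1/2}\int_0^\infty e^{-zs}s^{-1/2}\,ds$ applied to $z=\sigma^2+\tau^2$), the resulting one-variable factor matrices have entries of the form $\gamma_{u_1+k}(W_\sigma)e^{-\sigma_{u_1+k}^2 s}$, and letting $s\to\infty$ shows their Hankel positivity would force $\sigma_{u_1}^2+\sigma_{u_1+2}^2\le 2\sigma_{u_1+1}^2$, a concavity condition that hyponormality does not provide. More structurally, $M_{\mathbf{u}}(1)(\widehat{W}_{(\alpha,\beta)})$ is the Hadamard product of the (positive) tensor-form moment matrix with the matrix of correction factors $\sqrt{\sigma_{u_1+p_1+q_1}^2+\tau_{u_2+p_2+q_2}^2}$, and the latter is \emph{not} positive semidefinite in general (its $2\times 2$ Hankel minors demand log-convexity of $k\mapsto\sigma_{u_1+k}^2+\tau_{u_2}^2$, which fails already for weights with squares $1,10,10$). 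So no purely formal Schur-product factorization into ``manifestly nonnegative'' pieces exists; the displayed inequality genuinely uses the interplay between the Hankel positivity of the moments and the specific square roots, and it must be proved.

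For comparison, the paper closes this step by brute force: using scaling invariance it normalizes the six relevant weights to $\sqrt{x},\sqrt{y},1$ and $\sqrt{a},\sqrt{b},1$, writes out the $3\times 3$ matrix explicitly, and verifies positivity by comparing it with an auxiliary matrix $B$ via nested determinants. Your Schur-complement formulation is a perfectly reasonable alternative organization of that same verification, and the inequality above is provable by elementary (if unpleasant) means --- e.g.\ reduce to $C=B$ and $c=b$ since the left side increases in $C$ and $c$, then rationalize all three differences of square roots and compare the resulting rational expressions --- but until that computation is actually carried out, your proof has a genuine gap at its central step.
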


\begin{proof}
Without loss of generality, we can assume that 
$$
W_{\sigma }\equiv shift(\sqrt{x},\sqrt{y},1,\cdots ) \; \; \textrm{ and } \; \; W_{\tau }\equiv shift(\sqrt{a},\sqrt{b},1,\cdots),
$$
with $0<x<y<1$ and $0<a<b<1$. \ Also, it is enough to focus on the Six-Point Test at $(0,0)$ (cf. \cite[Theorem 6.1]{bridge}, \cite[Theorem 1.3]{CuYo1}); that is, we will check that $M_{\left(
0,0\right) }(1)\left( \widehat{W}_{(\alpha ,\beta )}\right) \geq 0$. 

Observe that 
\begin{equation*}
M_{\left( 0,0\right) }(1)\left( \widehat{W}_{(\alpha ,\beta )}\right)
=\left(
\begin{array}{ccc}
1 & x\sqrt{\frac{a+y}{a+x}} & a\sqrt{\frac{b+x}{a+x}} \\
x\sqrt{\frac{a+y}{a+x}} & xy\sqrt{\frac{a+1}{a+x}} & ax\sqrt{\frac{b+y}{a+x}}
\\
a\sqrt{\frac{b+x}{a+x}} & ax\sqrt{\frac{b+y}{a+x}} & ab\sqrt{\frac{x+1}{a+x}}%
\end{array}%
\right) \text{.}
\end{equation*}%
Thus, we obtain%
\begin{equation*}
M_{\left( 0,0\right) }(1)\left( \widehat{W}_{(\alpha ,\beta )}\right) \geq
0\Longleftrightarrow A\geq 0\text{,}
\end{equation*}%
where%
\begin{equation*}
A:=\left(
\begin{array}{ccc}
\sqrt{a+x} & x\sqrt{a+y} & a\sqrt{b+x} \\
x\sqrt{a+y} & xy\sqrt{a+1} & ax\sqrt{b+y} \\
a\sqrt{b+x} & ax\sqrt{b+y} & ab\sqrt{x+1}%
\end{array}%
\right) \text{.}
\end{equation*}%
Now modify the $(2,2)$ and $(3,3)$ entries of $A$ and let 
\begin{equation*}
B:=\left(
\begin{array}{ccc}
\sqrt{a+x} & x\sqrt{a+y} & a\sqrt{b+x} \\
x\sqrt{a+y} & xy\sqrt{a+x} & ax\sqrt{b+y} \\
a\sqrt{b+x} & ax\sqrt{b+y} & ab\sqrt{a+x}%
\end{array}%
\right) \text{.}
\end{equation*}%
A direct calculation using the Nested Determinant Test shows that $A\geq B$ and that $B\geq 0$. \ Thus, we have
\begin{equation*}
M_{\left( 0,0\right) }(1)\left( \widehat{W}_{(\alpha ,\beta )}\right) \geq 0%
\text{,}
\end{equation*}
so that $\widehat{W}_{(\alpha ,\beta )}$ is hyponormal, as desired.
\end{proof}

\begin{remark} \ One might be tempted to claim that subnormality is also preserved by the toral and spherical Aluthge transforms, within the class of $2$-variable weighted shifts of tensor form. \ However, this is not the case. \ Indeed, in \cite{Ex} and \cite{Exn}, G. Exner considered the weighted shift $W_{\sigma}$ with $3$-atomic Berger measure $\frac{1}{3}(\delta_{0}+\delta_{\frac{1}{2}}+\delta_{1})$ (studied in \cite{CPY}) and proved that the Aluthge transform of $W_{\sigma}$ is not subnormal.
\end{remark}


\section{\label{Sect4}Continuity Properties of the Aluthge Transforms}

We turn our attention to the continuity properties of the Aluthge transforms
of a commuting pair. \ The following result is well known. \ For a single
operator $T\in \mathcal{B}(\mathcal{H})$, the Aluthge transform map $%
T\rightarrow \widetilde{T}$ is $\left( \left\Vert \cdot \right\Vert
,\left\Vert \cdot \right\Vert \right) -$ continuous on $\mathcal{B}(\mathcal{%
H})$ (\cite{DySc}). \ We want to extend the result to multivariable case. \
First, we define the operator norm of $\mathbf{T}\equiv (T_1,T_2)$ as
\begin{equation}
\left\Vert \mathbf{T}\right\Vert :=\max \left\{ \left\Vert T_1 \right\Vert
,\left\Vert T_2 \right\Vert \right\}.  \label{normdef}
\end{equation}

\begin{theorem}
\label{ContinuityC}The toral Aluthge transform map $\mathbf{T} \rightarrow
\widetilde{\mathbf{T}}$ is $\left( \left\Vert \cdot \right\Vert ,\left\Vert \cdot \right\Vert \right) -$ continuous on $\mathcal{%
B}(\mathcal{H})$.
\end{theorem}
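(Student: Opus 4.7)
The plan is to reduce the multivariable statement to the known one-variable continuity result of Dykema--Schep (cited as \cite{DySc} above), by exploiting the coordinate-wise nature of the toral Aluthge transform together with the max norm defined in (\ref{normdef}).

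First I would record the single-variable fact: the map $S \mapsto \widetilde{S}$ from $\mathcal{B}(\mathcal{H})$ to itself is norm-continuous. Concretely, if $S_n \to S$ in operator norm, then $\widetilde{S_n} \to \widetilde{S}$ in operator norm. This is exactly the theorem of Dykema and Schep, and requires no modification.

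Next I would unpack the definitions in the multivariable setting. If $\mathbf{T}^{(n)} = (T_1^{(n)}, T_2^{(n)})$ converges to $\mathbf{T} = (T_1, T_2)$ with respect to the norm $\|\cdot\|$ in (\ref{normdef}), then
\[
\|T_i^{(n)} - T_i\| \le \|\mathbf{T}^{(n)} - \mathbf{T}\| \longrightarrow 0 \quad (i=1,2),
\]
so each component converges in the usual operator norm. Applying the single-variable continuity result to each slot yields $\widetilde{T_i^{(n)}} \to \widetilde{T_i}$ in norm for $i=1,2$. Since, by definition, $\widetilde{\mathbf{T}^{(n)}} = (\widetilde{T_1^{(n)}}, \widetilde{T_2^{(n)}})$ and $\widetilde{\mathbf{T}} = (\widetilde{T_1}, \widetilde{T_2})$, we conclude
\[
\|\widetilde{\mathbf{T}^{(n)}} - \widetilde{\mathbf{T}}\| = \max_{i=1,2} \|\widetilde{T_i^{(n)}} - \widetilde{T_i}\| \longrightarrow 0,
\]
which is the desired continuity.

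There is essentially no obstacle here, since the toral Aluthge transform is defined purely coordinate-wise and the norm in (\ref{normdef}) is the max of the coordinate norms; the entire content is transported from the one-variable case. The only subtlety worth a sentence in the write-up is that one does \emph{not} need to assume commutativity of $\mathbf{T}^{(n)}$ or $\mathbf{T}$ for the continuity statement itself, which is why the theorem is stated on all of $\mathcal{B}(\mathcal{H}) \times \mathcal{B}(\mathcal{H})$ rather than only on the commuting locus. (By contrast, the analogous statement for the spherical Aluthge transform will require a genuinely new argument, because $\widehat{T}_1$ depends jointly on both $T_1$ and $T_2$.)
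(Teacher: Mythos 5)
Your proposal is correct and is exactly the argument the paper has in mind: the paper's proof is the one-line remark ``Straightforward from the definition of $\left\Vert \mathbf{T}\right\Vert$,'' relying on the previously cited one-variable continuity result of \cite{DySc} together with the coordinate-wise definition of the toral transform and the max norm (\ref{normdef}). You have simply written out the details the paper omits, including the correct observation that no commutativity hypothesis is needed.
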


\begin{proof}
Straightforward from the definition of $\left\Vert \mathbf{T}\right\Vert$.

\end{proof}

We turn our attention to the continuity properties in $\mathcal{B}(\mathcal{H%
})$ for the spherical Aluthge transform of a commuting pair. \ For this, we need
a couple of auxiliary results, which can be proved by suitable adaptations of the results in \cite[Lemmas 2.1 and 2.2]{DySc}.

\begin{lemma}
\label{Re 4} \ Let $\mathbf{T}\equiv (T_{1},T_{2}) \equiv (V_1P,V_2P)$ be a pair of commuting operators, written in joint polar decomposition form, where $P=\sqrt{T_{1}^{\ast }T_{1}+T_{2}^{\ast }T_{2}}$. \ For $n\in \mathbb{N}$ and $t>0$, let $f_{n}\left(
t\right) :=\sqrt{\max \left( \frac{1}{n},t\right) }$ and let $A_n:=f_n(\mathbf{T})$. \ Then:
\newline
(i) \ $\left\Vert A_{n} \right\Vert \leq \max \left( n^{-%
\frac{1}{2}},\left\Vert P\right\Vert ^{\frac{1}{2}}\right) $;\newline
(ii) \ $\left\Vert P A_n^{-1}\right\Vert
\leq \left\Vert P\right\Vert ^{\frac{1}{2}}$;\newline
(iii) \ $\left\Vert A_n -P^{\frac{1}{2}}\right\Vert \leq
n^{-\frac{1}{2}}$;\newline
(iv) \ $\left\Vert PA_n^{-1}-P^{\frac{1}{2}%
}\right\Vert \leq \frac{1}{4} n^{-\frac{1}{2}}$;\newline
(v) \ For $i=1,2$, $\left\Vert A_n T_{i}A_n^{-1}-P^{\frac{1}{2}}V_{i} P^{\frac{1}{2}}\right\Vert \leq
\frac{5}{4} n^{-\frac{1}{2}}\left\Vert T_{i}\right\Vert ^{\frac{1}{2}}$.
\end{lemma}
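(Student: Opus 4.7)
The plan is to prove (i)--(iv) via the Borel functional calculus on the positive operator $P$, reducing each operator norm bound to a pointwise scalar inequality for $f_n(t):=\sqrt{\max(1/n,t)}$ on $\sigma(P)\subseteq[0,\|P\|]$; (v) will then follow by a telescoping identity combined with (ii)--(iv) and $\|V_i\|\le 1$ (Proposition \ref{basic}(i)). I read $A_n$ as $f_n(P)$. Because $f_n\ge n^{-1/2}>0$, $A_n$ is invertible with $A_n^{-1}=(1/f_n)(P)$, and both $A_n$ and $A_n^{-1}$ commute with $P$ and $P^{1/2}$.

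For each of (i)--(iv), I would split the argument into the two regimes $t\ge 1/n$ and $0\le t<1/n$. Inequality (i) is immediate from the definition of $f_n$; for (ii), one has $t/f_n(t)=t^{1/2}$ on $t\ge 1/n$, while on $t<1/n$ the bound $t\sqrt n\le t^{1/2}$ is just $nt\le 1$; for (iii), $|f_n(t)-t^{1/2}|$ vanishes on $t\ge 1/n$ and equals $n^{-1/2}-t^{1/2}\le n^{-1/2}$ on $t<1/n$; for (iv), the quantity vanishes on $t\ge 1/n$, and on $t<1/n$ the substitution $u:=\sqrt{nt}\in[0,1]$ turns $t^{1/2}-t\sqrt n$ into $u(1-u)/\sqrt n$, which is maximized at $u=\tfrac12$ with value $\tfrac14 n^{-1/2}$. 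Taking suprema over $\sigma(P)$ then converts each pointwise inequality into the stated operator norm bound.

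For (v), I would substitute $T_i=V_iP$ and use $[A_n,P]=0$ to produce the telescoping identity
\[
A_nT_iA_n^{-1}-P^{1/2}V_iP^{1/2}=(A_n-P^{1/2})V_i(PA_n^{-1})+P^{1/2}V_i(PA_n^{-1}-P^{1/2}).
\]
The first summand I would refactor as $(A_n-P^{1/2})(V_iP^{1/2})(P^{1/2}A_n^{-1})$ and bound using (iii), the estimate $\|P^{1/2}A_n^{-1}\|\le 1$ (the scalar $t^{1/2}/f_n(t)$ is $\le 1$ in both regimes), and the identity $\|V_iP^{1/2}\|^2=\|V_iPV_i^*\|\le\|V_i\|\,\|PV_i^*\|=\|T_i\|$ (using $PV_i^*=T_i^*$ and $\|V_i\|\le 1$), which yields $n^{-1/2}\|T_i\|^{1/2}$. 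The second summand I would handle by taking adjoints (noting that $PA_n^{-1}-P^{1/2}$ is self-adjoint) and applying the companion bound $\|V_i^*P^{1/2}\|\le\|T_i\|^{1/2}$ together with (iv), contributing $\tfrac14 n^{-1/2}\|T_i\|^{1/2}$. Summing gives $\tfrac54 n^{-1/2}\|T_i\|^{1/2}$, as claimed.

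The main obstacle is producing the companion bound $\|V_i^*P^{1/2}\|\le\|T_i\|^{1/2}$: the naive estimate $\|V_i^*P^{1/2}\|^2=\|V_i^*PV_i\|\le\|PV_i\|=\|V_i^*P\|$ only yields $\|P\|$ in place of $\|T_i\|$, because $V_i$ is merely a contraction (not a partial isometry) and $V_i^*P$ has no direct identification with $T_i$. Overcoming this should require exploiting the joint partial isometry $V_1^*V_1+V_2^*V_2$ furnished by Proposition \ref{basic}(i), in the spirit of the single-variable Dykema--Schultz argument \cite{DySc}.
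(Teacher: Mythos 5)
Your reading $A_n=f_n(P)$ is clearly the intended one, and your treatment of (i)--(iv) via the continuous functional calculus on $\sigma(P)$ is correct; the four scalar inequalities check out. The paper itself offers no proof of this lemma beyond deferring to \cite{DySc}, so your write-up is essentially the "suitable adaptation" the authors have in mind. For (v), the telescoping identity, the factorization of the first summand, and the bound $\|V_iP^{1/2}\|^2=\|V_iPV_i^*\|\le\|V_i\|\,\|PV_i^*\|=\|V_i\|\,\|T_i^*\|\le\|T_i\|$ are all correct. The one genuine gap is exactly the one you flag: the companion bound $\|P^{1/2}V_i\|=\|V_i^*P^{1/2}\|\le\|T_i\|^{1/2}$ for the second summand. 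Without it you only obtain $\tfrac14 n^{-1/2}\|P\|^{1/2}$ there, and since $\|T_i\|\le\|P\|$ can be strict, this does not give the stated constant (it would still suffice for Theorem \ref{ContinuityP}, but not for the lemma as stated).

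The missing inequality is true, and the ingredient that rescues it is not the joint partial isometry identity but the commutativity of $T_1$ and $T_2$. For $z\in\mathcal{H}$,
\[
\|PV_i(Pz)\|^2=\langle P^2T_iz,T_iz\rangle=\sum_{j=1}^{2}\|T_jT_iz\|^2=\sum_{j=1}^{2}\|T_iT_jz\|^2\le\|T_i\|^2\sum_{j=1}^{2}\|T_jz\|^2=\|T_i\|^2\|Pz\|^2,
\]
so $\|PV_i\|\le\|T_i\|$ on $\overline{\mathrm{ran}\,P}$, hence everywhere, since $V_i$ vanishes on $\ker P$. Consequently
\[
\|P^{1/2}V_i\|^2=\|(P^{1/2}V_i)^{*}(P^{1/2}V_i)\|=\|V_i^*PV_i\|\le\|V_i^*\|\,\|PV_i\|\le\|T_i\|,
\]
which is precisely the bound you need; inserting it into your second summand yields $\tfrac14 n^{-1/2}\|T_i\|^{1/2}$ and completes (v). In operator form the first display is the inequality $T_i^*P^2T_i=\sum_j(T_iT_j)^*(T_iT_j)\le\|T_i\|^2P^2$, which can fail for noncommuting pairs, so commutativity is used here in an essential way and your proof should say so explicitly.
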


\begin{lemma}
\label{Re 5} \ Given $R\geq 1$ and $\epsilon >0$, there are real polynomials $p$ and $q$ such that for every commuting pair $\mathbf{T}%
\equiv (T_{1},T_{2})$ with $\left\Vert T_{i}\right\Vert
\leq R$ $\left( i=1,2\right) $, we have
\begin{equation*}
\left\Vert P^{\frac{1}{2}}V_{i}P^{\frac{1}{2}}-p\left( T_{1}^{\ast
}T_{1}+T_{2}^{\ast }T_{2}\right) T_{i}q\left( T_{1}^{\ast }T_{1}+T_{2}^{\ast
}T_{2}\right) \right\Vert <\epsilon \text{.}
\end{equation*}
\end{lemma}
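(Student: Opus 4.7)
The plan is to combine Lemma \ref{Re 4}(v) (which approximates $P^{1/2} V_i P^{1/2}$ by $A_n T_i A_n^{-1}$) with the Weierstrass approximation theorem applied to $A_n$ and $A_n^{-1}$ viewed as continuous functions of the bounded positive operator $P^2 = T_1^{\ast}T_1 + T_2^{\ast}T_2$. The crucial structural observation is that $\|P^2\| \le \|T_1\|^2 + \|T_2\|^2 \le 2R^2$, so $\sigma(P^2) \subseteq [0, 2R^2]$ uniformly over all admissible pairs; this makes it possible to choose a single pair of polynomials that works for every such $\mathbf{T}$.

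First, given $R \ge 1$ and $\epsilon > 0$, I would choose $n \in \mathbb{N}$ large enough that $\tfrac{5}{4}\, n^{-1/2} R^{1/2} < \epsilon/2$. Then by Lemma \ref{Re 4}(v), for every commuting pair $(T_1,T_2)$ with $\|T_i\| \le R$ we obtain $\|A_n T_i A_n^{-1} - P^{1/2} V_i P^{1/2}\| < \epsilon/2$, reducing the problem to approximating $A_n T_i A_n^{-1}$ to within $\epsilon/2$ by an expression of the form $p(P^2) T_i q(P^2)$.

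Next, since $P$ is positive and self-adjoint, I rewrite $A_n = f_n(P) = g_n(P^2)$, where $g_n(s) := \sqrt{\max(1/n, \sqrt{s})}$; similarly $A_n^{-1} = \tilde g_n(P^2)$ with $\tilde g_n := 1/g_n$. Both $g_n$ and $\tilde g_n$ are continuous on the fixed compact interval $[0, 2R^2]$. By the Weierstrass approximation theorem, for any $\delta > 0$ I can find real polynomials $p, q$ (depending only on $R$, $n$, $\delta$) with $\|g_n - p\|_{\infty,[0,2R^2]} < \delta$ and $\|\tilde g_n - q\|_{\infty,[0,2R^2]} < \delta$. The continuous functional calculus then yields the operator-norm bounds $\|A_n - p(P^2)\| < \delta$ and $\|A_n^{-1} - q(P^2)\| < \delta$ uniformly over all admissible pairs.

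Finally, the triangle inequality
$$\|A_n T_i A_n^{-1} - p(P^2) T_i q(P^2)\| \le \|A_n - p(P^2)\|\, \|T_i\|\, \|A_n^{-1}\| + \|p(P^2)\|\, \|T_i\|\, \|A_n^{-1} - q(P^2)\|,$$
together with the uniform bounds $\|T_i\| \le R$, $\|A_n^{-1}\| \le \sqrt{n}$, and $\|p(P^2)\| \le \|g_n\|_{\infty,[0,2R^2]} + \delta$ (all depending only on $R$ and $n$, which are now fixed), shows that choosing $\delta$ sufficiently small forces the left-hand side below $\epsilon/2$. Combining this with the first estimate yields the claim. I expect the main obstacle to be the \emph{uniformity} requirement: a single pair $(p,q)$ must serve for every admissible $(T_1,T_2)$; this is precisely why the Weierstrass step is carried out on the fixed interval $[0, 2R^2]$ rather than on the varying spectrum $\sigma(P^2)$.
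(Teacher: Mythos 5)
Your argument is correct and is exactly the intended proof: the paper only states that Lemma \ref{Re 5} follows by adapting \cite[Lemma 2.2]{DySc}, and that adaptation is precisely your combination of Lemma \ref{Re 4}(v) with Weierstrass approximation of $g_n$ and $1/g_n$ on the uniform spectral interval $[0,2R^2]$ for $P^2=T_1^*T_1+T_2^*T_2$. The key uniformity point you flag (one pair $(p,q)$ for all admissible pairs, obtained by working on the fixed interval rather than on $\sigma(P^2)$) is handled correctly, so nothing further is needed.
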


In the statement below, $\left\Vert \cdot \right\Vert $ refers to the
operator norm topology on $\mathcal{B}(\mathcal{H})^2$ (see \ref{normdef}).

\begin{theorem}
\label{ContinuityP}The spherical Aluthge transform 
$$
\mathbf{(}%
T_{1},T_{2})\rightarrow \widehat{\mathbf{(}T_{1},T_{2})}
$$
is $\left(
\left\Vert \cdot \right\Vert ,\left\Vert \cdot \right\Vert \right) -$
continuous on $\mathcal{B}(\mathcal{H})$.
\end{theorem}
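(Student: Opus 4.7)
The plan is to adapt the Dykema--Schultz continuity proof from the single-variable setting, using the two auxiliary lemmas just recorded. The essential difficulty is that $P = \sqrt{T_1^*T_1 + T_2^*T_2}$, the joint partial isometry data $(V_1, V_2)$, and $\sqrt{P}$ do not depend continuously on $(T_1, T_2)$ in the operator norm (because the square root is non-Lipschitz at $0$ and the partial isometry is unstable on $\ker P$), so one cannot directly read continuity off the formula $\widehat{T_i} = \sqrt{P}\, V_i\, \sqrt{P}$. The remedy supplied by Lemma \ref{Re 5} is that on any operator-norm ball, each $\sqrt{P}V_i\sqrt{P}$ is uniformly approximated by expressions of the form
\[
\Phi_i(S_1, S_2) := p(S_1^*S_1 + S_2^*S_2)\, S_i\, q(S_1^*S_1 + S_2^*S_2),
\]
and such noncommutative polynomials in $S_1, S_2, S_1^*, S_2^*$ \emph{are} jointly norm-continuous.

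Concretely, fix $(T_1, T_2)$ and $\epsilon > 0$; choose $R \geq 1$ with $\max(\|T_1\|, \|T_2\|) < R$. By Lemma \ref{Re 5} applied to $R$ and tolerance $\epsilon/3$, there exist real polynomials $p, q$ (depending only on $R$ and $\epsilon$) such that
\[
\bigl\|\widehat{S_i} - \Phi_i(S_1, S_2)\bigr\| < \epsilon/3 \qquad (i=1,2)
\]
for \emph{every} commuting pair $(S_1, S_2)$ with $\max(\|S_1\|, \|S_2\|) \leq R$. Since $\Phi_i$ is a noncommutative polynomial, standard submultiplicativity estimates show that $(S_1, S_2) \mapsto \Phi_i(S_1, S_2)$ is uniformly continuous on the ball of radius $R$. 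Hence there is $\delta > 0$ such that $\|S_j - T_j\| < \delta$ ($j=1,2$) forces both $\max_j \|S_j\| \leq R$ and $\|\Phi_i(T_1, T_2) - \Phi_i(S_1, S_2)\| < \epsilon/3$. A triangle inequality then yields, for $i = 1, 2$,
\[
\|\widehat{T_i} - \widehat{S_i}\| \leq \|\widehat{T_i} - \Phi_i(T_1,T_2)\| + \|\Phi_i(T_1,T_2) - \Phi_i(S_1,S_2)\| + \|\Phi_i(S_1,S_2) - \widehat{S_i}\| < \epsilon,
\]
and taking the maximum over $i$ produces $\|\widehat{(T_1,T_2)} - \widehat{(S_1,S_2)}\| < \epsilon$ in the pair norm of (\ref{normdef}).

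The real obstacle is already absorbed into Lemma \ref{Re 5}: one must show that $\sqrt{P}V_i\sqrt{P}$ can be polynomially approximated in $(T_1, T_2, T_1^*, T_2^*)$ uniformly on bounded sets. This is handled, as in the single-variable case, by first cutting $P$ away from zero with $A_n = f_n(P)$ so that $A_n$ is invertible with controlled norm (Lemma \ref{Re 4}(i)--(ii)) and $A_n T_i A_n^{-1}$ is a uniform $O(n^{-1/2})$ approximant to $\sqrt{P}V_i\sqrt{P}$ (Lemma \ref{Re 4}(v)), and then Weierstrass-approximating the scalar functions $f_n$ and $t \mapsto t f_n(t)^{-1}$ by polynomials on the spectrum of $P^2 = T_1^*T_1 + T_2^*T_2 \subseteq [0, 2R^2]$. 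Granting Lemma \ref{Re 5}, continuity of $\widehat{\,\cdot\,}$ is exactly the three-epsilon argument above.
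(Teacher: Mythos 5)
Your argument is correct and is essentially the paper's proof: the authors likewise reduce the theorem to the Dykema--Schultz three-epsilon scheme, using Lemma \ref{Re 5} (uniform polynomial approximation of $\sqrt{P}V_i\sqrt{P}$ on norm-bounded sets of commuting pairs) in place of \cite[Lemma 2.2]{DySc}, together with norm-continuity of the noncommutative polynomial expressions $p(S_1^*S_1+S_2^*S_2)\,S_i\,q(S_1^*S_1+S_2^*S_2)$. The only detail the paper records explicitly that you leave implicit is the bound $\|V_i\|\le 1$, which feeds into the proof of Lemma \ref{Re 5} rather than into the final triangle inequality, so nothing is lost.
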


\begin{proof}
Observe first that $\left\Vert V_i \right\Vert \le 1$ for $i=1,2$, as follows from the inequality
$$
\left\Vert V_iPx \right\Vert^2 =\left\Vert T_ix \right\Vert^2 = <T_i^*T_ix,x> \le <P^2x,x> = \left\Vert Px \right\Vert^2.
$$
The proof is now an easy consequence of the Proof of \cite[Theorem 2.3]{DySc}, when one uses Lemma \ref{Re 5} instead of \cite[Lemma 2.2]{DySc}.
\end{proof}


\section{\label{Sect5}Spectral Properties of the Aluthge Transforms}

In this section, we study whether the multivariable Aluthge transforms preserve the Taylor spectrum and Taylor essential spectrum, when $%
W_{(\alpha ,\beta )}$ is in the class $\mathcal{TC}$ of $2$-variable weighted shifts with core of tensor form; this is a large nontrivial class, which has been previously studied in [15--20], [24--28] and [47--49].


We begin by looking at the toral Aluthge transform. \ By Proposition \ref%
{commuting1} and Remark \ref{Re 1}, we note that the weight diagram of $%
W_{(\alpha ,\beta )}$ is as in Figure \ref{Figure 2ex}(i), provided the toral Aluthge transform is commutative. \ We first address the Taylor spectrum.

\begin{lemma}
\label{lem1} (i) (\cite{Cu1}, \cite{Cu3}) \ Let $\mathcal{H}_{1}$ and $%
\mathcal{H}_{2}$ be Hilbert spaces, and let $A_{i}\in \mathcal{B}(\mathcal{H}%
_{1}),$ $C_{i}\in \mathcal{B}(\mathcal{H}_{2})$ and $B_{i}\in \mathcal{B}(%
\mathcal{H}_{1},\mathcal{H}_{2}),(i=1,\cdots ,n)$ be such that%
\begin{equation*}
\left(
\begin{array}{cc}
\mathbf{A} & 0 \\
\mathbf{B} & \mathbf{C}%
\end{array}%
\right) :=\left( \left(
\begin{array}{cc}
A_{1} & 0 \\
B_{1} & C_{1}%
\end{array}%
\right) ,\ldots ,\left(
\begin{array}{cc}
A_{n} & 0 \\
B_{n} & C_{n}%
\end{array}%
\right) \right)
\end{equation*}%
is commuting. \ Assume that $\mathbf{A}$ and $\left(
\begin{array}{cc}
\mathbf{A} & 0 \\
\mathbf{B} & \mathbf{C}%
\end{array}%
\right) $ are Taylor invertible. \ Then, $\mathbf{C}$ is Taylor invertible.
\ Furthermore, if $\mathbf{A}$ and $\mathbf{C}$ are Taylor invertible, then $%
\left(
\begin{array}{cc}
\mathbf{A} & 0 \\
\mathbf{B} & \mathbf{C}%
\end{array}%
\right) $ is Taylor invertible.\newline
(ii) (\cite{CuFi}) \ For $\mathbf{A}$ and $\mathbf{B}$ two commuting $n$-tuples of bounded
operators on Hilbert space, we have:%
\begin{equation*}
\sigma _{T}(I \otimes \mathbf{A},\mathbf{B} \otimes I)=\sigma _{T}(\mathbf{A}%
)\times \sigma _{T}(\mathbf{B})
\end{equation*}%
and%
\begin{equation*}
\sigma _{Te}(I \otimes \mathbf{A}, \mathbf{B} \otimes I)=\sigma _{Te}\left(
\mathbf{A}\right) \times \sigma _{T}\left( \mathbf{B}\right) \cup \sigma
_{T}\left( \mathbf{A}\right) \times \sigma _{Te}\left( \mathbf{B}\right)
\text{.}
\end{equation*}
\end{lemma}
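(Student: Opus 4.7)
The plan is to handle both parts of Lemma \ref{lem1} via the cohomological formalism of the Koszul complex introduced in Subsection 2.5.

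For Part (i), the key observation is that the block-lower-triangular form of the tuple makes $0 \oplus \mathcal{H}_2$ a joint invariant subspace on which the action is exactly $\mathbf{C}$, while the induced action on the quotient $\mathcal{H}_1$ is $\mathbf{A}$. Tensoring the resulting short exact sequence of Hilbert spaces with the exterior algebra produces a short exact sequence of Koszul complexes
\[
0 \longrightarrow K(\mathbf{C},\mathcal{H}_2) \longrightarrow K\!\left( \left(\begin{array}{cc} \mathbf{A} & 0 \\ \mathbf{B} & \mathbf{C} \end{array}\right),\mathcal{H}_1\oplus\mathcal{H}_2\right) \longrightarrow K(\mathbf{A},\mathcal{H}_1) \longrightarrow 0.
\]
The associated long exact sequence in cohomology then furnishes the usual two-out-of-three principle: whenever any two of the three complexes are exact, so is the third. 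Both implications of Part (i) follow immediately.

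For Part (ii), I would use the Künneth formula. The identification of the exterior algebra on $m+n$ generators with the graded tensor product of the exterior algebras on $m$ and $n$ generators makes the Koszul complex of $(I\otimes (\mathbf{A}-\lambda),\, (\mathbf{B}-\mu)\otimes I)$ on $\mathcal{H}_1\otimes\mathcal{H}_2$ canonically isomorphic to the total complex of the double complex $K(\mathbf{A}-\lambda,\mathcal{H}_1)\otimes K(\mathbf{B}-\mu,\mathcal{H}_2)$. Once closed-range of the factor differentials is in hand, the Künneth theorem gives
\[
H^r(\mathrm{total}) \;\cong\; \bigoplus_{p+q=r} H^p\!\left(K(\mathbf{A}-\lambda,\mathcal{H}_1)\right) \otimes H^q\!\left(K(\mathbf{B}-\mu,\mathcal{H}_2)\right).
\]
The total complex is therefore exact iff one of the two factor complexes is exact, which is precisely the identity $\sigma_T(I\otimes\mathbf{A},\mathbf{B}\otimes I) = \sigma_T(\mathbf{A})\times\sigma_T(\mathbf{B})$. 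For the essential spectrum, the same decomposition shows that the total cohomology is finite-dimensional at every level iff $H^*(K)=0$, or $H^*(L)=0$, or both factor complexes are themselves Fredholm; taking complements (and using the inclusion $\sigma_{Te}\subseteq\sigma_T$) gives precisely $\sigma_{Te}(\mathbf{A})\times\sigma_T(\mathbf{B}) \cup \sigma_T(\mathbf{A})\times\sigma_{Te}(\mathbf{B})$.

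The main obstacle is the analytic step in the Künneth argument, since the completed Hilbert tensor product does not preserve exactness of cochain complexes in general; the theorem requires the differentials in each factor to have closed range. In both regimes of interest this is automatic: an exact Hilbert complex has closed-range boundaries by open-mapping-type arguments, and a Fredholm complex has closed-range boundaries by definition. These analytic points are precisely the content developed in \cite{Tay1,Tay2} and refined in \cite{CuFi}, so after invoking them the two spectral identities drop out with no further calculation. Part (i), being purely homological once the short exact sequence of Koszul complexes is recognized, requires no such technical preparation.
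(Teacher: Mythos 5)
The paper offers no proof of this lemma; it is quoted from the cited references, so there is nothing internal to compare against. Your sketch reproduces the standard arguments from those references and is essentially correct. Part (i) is complete as written: $0\oplus\mathcal{H}_{2}$ is jointly invariant, the restriction is $\mathbf{C}$ and the quotient is $\mathbf{A}$, and the long exact cohomology sequence of the resulting short exact sequence of Koszul complexes gives both implications; being purely algebraic, it needs no closed-range hypotheses. For part (ii) the outline is right, but one direction is thinner than your closing remarks suggest. The inclusion $\sigma _{T}(\mathbf{A})\times \sigma _{T}(\mathbf{B})\subseteq \sigma _{T}(I\otimes \mathbf{A},\mathbf{B}\otimes I)$ requires showing that the total complex is \emph{not} exact when neither factor is, and in that regime neither factor need have closed-range differentials: a Koszul complex can fail exactness solely because a boundary map has dense, non-closed range, in which case the algebraic cohomology is nonzero while the harmonic space is trivial, and the K\"{u}nneth isomorphism you invoke is simply not available. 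The repair used in \cite{CuFi} is to pass to the self-adjoint operators $D+D^{\ast }$ and tensor approximate null vectors rather than cohomology classes; with that substitution your argument goes through. The essential-spectrum bookkeeping (total complex Fredholm if and only if one factor is exact or both are Fredholm, followed by complementation using $\sigma _{Te}\subseteq \sigma _{T}$) is correct.
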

\ To apply Lemma \ref{lem1}, we first let%
\begin{equation*}
\begin{tabular}{l}
$W_{\omega }:=\mathrm{shift}\left( \omega _{0},\omega _{1},\cdots \right) $,
$W_{\tau }:=\mathrm{shift}\left( \tau _{0},\tau _{1},\cdots \right) $,  \\
$W_{\omega ^{(0)}}:=\mathrm{shift}\left( x_{0},x_{1},\omega _{1},\omega
_{2},\cdots \right) $, $W_{\omega ^{(a)}}:=\mathrm{shift}\left( a,\omega
_{0},\omega _{1},\cdots \right) $,  \\
$W_{\omega ^{(2)}}:=\mathrm{shift}\left( a\frac{\omega _{0}}{x_{1}},\omega
_{0},\omega _{1},\cdots \right) $, $I:=\mathrm{diag}\left( 1,1,\cdots
\right) $, \\
$D_{1}:=\mathrm{diag}\left( y_{0},a\frac{y_{0}}{x_{0}},a\frac{\omega
_{0}y_{0}}{x_{0}x_{1}},a\frac{\omega _{0}y_{0}}{x_{0}x_{1}},\cdots \right) $%
, and $D_{2}:=\mathrm{diag}\left( \frac{x_{1}\tau _{0}}{\omega _{0}},\tau
_{0},\tau _{0},\cdots \right) $, \\
where $I$ is the identity matrix.%
\end{tabular}
\end{equation*}

\begin{theorem}
\label{thmtaylor}Consider a commuting $2$-variable weighted shift $W_{(\alpha ,\beta
)}\equiv (T_{1},T_{2})$ with weight diagram given by Figure \ref{Figure 2ex}(i). \ Assume also that $T_1$ and $T_2$ are hyponormal. \ Then 
\begin{equation}
\begin{tabular}{l}
\begin{tabular}{l}
$\sigma _{T}\left( W_{(\alpha ,\beta )}\right) =\left( \left\Vert W_{\omega
}\right\Vert \cdot \overline{\mathbb{D}}\times \left\Vert W_{\tau
}\right\Vert \cdot \overline{\mathbb{D}}\right) $ and \\
$\sigma _{Te}\left( W_{(\alpha ,\beta )}\right) =\left( \left\Vert W_{\omega
}\right\Vert \cdot \mathbb{T}\times \left\Vert W_{\tau }\right\Vert \cdot
\overline{\mathbb{D}}\right) \cup \left( \left\Vert W_{\omega }\right\Vert
\cdot \overline{\mathbb{D}}\times \left\Vert W_{\tau }\right\Vert \cdot
\mathbb{T}\right) .$%
\end{tabular}%
\end{tabular}
\label{1}
\end{equation}%
Here $\overline{\mathbb{D}}$ denotes the closure of the open unit disk $%
\mathbb{D}$ and $\mathbb{T}$ the unit circle.
\end{theorem}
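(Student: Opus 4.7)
The plan is to exploit the block lower triangular form of $W_{(\alpha,\beta)}$ with respect to the orthogonal decomposition $\ell^2(\mathbb{Z}_+^2) = \mathcal{K}_2 \oplus \mathcal{K}_1$, where $\mathcal{K}_1 := \mathcal{M} \cap \mathcal{N}$ is the invariant core subspace and $\mathcal{K}_2$ consists of the $0$th row together with the $0$th column. Under this decomposition each $T_i$ has the form $T_i = \begin{pmatrix} A_i & 0 \\ B_i & C_i \end{pmatrix}$, with $\mathbf{C} := (C_1,C_2) = W_{(\alpha,\beta)}|_{\mathcal{K}_1}$ equal (after the canonical reindexing of $\mathcal{K}_1$ onto $\ell^2(\mathbb{Z}_+^2)$) to the tensor form $(I \otimes W_\omega, W_\tau \otimes I)$.

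First I would identify $\sigma_T(\mathbf{C})$ and $\sigma_{Te}(\mathbf{C})$ using Lemma \ref{lem1}(ii). The hyponormality of $T_1$ and $T_2$ forces $\{\omega_n\}$ and $\{\tau_n\}$ to be nondecreasing, so $W_\omega, W_\tau$ are hyponormal unilateral shifts with norms $\lim_n\omega_n$ and $\lim_n\tau_n$; since each is a compact perturbation of a scalar multiple of $U_+$, standard one-variable theory gives $\sigma_T(W_\omega) = \|W_\omega\|\overline{\mathbb{D}}$, $\sigma_{Te}(W_\omega) = \|W_\omega\|\mathbb{T}$, and analogously for $W_\tau$. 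Lemma \ref{lem1}(ii) then delivers exactly the right-hand sides of (\ref{1}) for $\sigma_T(\mathbf{C})$ and $\sigma_{Te}(\mathbf{C})$.

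Next I would analyze $\mathbf{A}=(A_1,A_2)$ on $\mathcal{K}_2 = \mathbb{C} e_{(0,0)} \oplus \mathcal{E}_R \oplus \mathcal{E}_C$, where $\mathcal{E}_R$ (resp. $\mathcal{E}_C$) denotes the span of the basis vectors in the $0$th row (resp. $0$th column) other than $e_{(0,0)}$. A direct inspection shows that $A_1$ acts as the hyponormal shift $W_{\omega^{(0)}}$ on $\mathbb{C} e_{(0,0)} \oplus \mathcal{E}_R$ and vanishes on $\mathcal{E}_C$, while $A_2$ plays the symmetric role on $\mathbb{C} e_{(0,0)} \oplus \mathcal{E}_C$ and vanishes on $\mathcal{E}_R$; in particular $A_1 A_2 = A_2 A_1 = 0$. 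The subspace $\mathcal{E}_R \oplus \mathcal{E}_C$ is $\mathbf{A}$-invariant, and $\mathbf{A}$ splits there as the direct sum of the degenerate pairs $(W_{\omega^{(0)}}|_{\mathcal{E}_R}, 0)$ and $(0, W_{y,\tau}|_{\mathcal{E}_C})$, whose Taylor and essential Taylor spectra are the axis segments $\|W_\omega\|\overline{\mathbb{D}}\times\{0\}$, $\{0\}\times\|W_\tau\|\overline{\mathbb{D}}$ and the axis circles $\|W_\omega\|\mathbb{T}\times\{0\}$, $\{0\}\times\|W_\tau\|\mathbb{T}$. Feeding this into Lemma \ref{lem1}(i) applied to the block decomposition of $\mathbf{A}$ with one-dimensional complement $\mathbb{C} e_{(0,0)}$, I obtain
\[
\sigma_T(\mathbf{A}) \subseteq \bigl(\|W_\omega\|\overline{\mathbb{D}}\times\{0\}\bigr) \cup \bigl(\{0\}\times\|W_\tau\|\overline{\mathbb{D}}\bigr) \subseteq \sigma_T(\mathbf{C}),
\]
together with the parallel inclusion $\sigma_{Te}(\mathbf{A}) \subseteq \sigma_{Te}(\mathbf{C})$.

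Finally, Lemma \ref{lem1}(i) applied to $\mathbf{T}$ yields both $\sigma_T(\mathbf{T}) \subseteq \sigma_T(\mathbf{A}) \cup \sigma_T(\mathbf{C}) = \sigma_T(\mathbf{C})$ and $\sigma_T(\mathbf{C}) \setminus \sigma_T(\mathbf{A}) \subseteq \sigma_T(\mathbf{T})$. Since $\sigma_T(\mathbf{T})$ is closed and the closed polydisk $\sigma_T(\mathbf{C})$ equals the closure of its complement to the two axis segments of $\sigma_T(\mathbf{A})$, I conclude $\sigma_T(\mathbf{T}) = \sigma_T(\mathbf{C})$. The identical closure argument---combined with the Fredholm analogue of Lemma \ref{lem1}(i), obtained by the same Koszul-complex manipulations after replacing ``Taylor invertible'' with ``Taylor Fredholm''---yields $\sigma_{Te}(\mathbf{T}) = \sigma_{Te}(\mathbf{C})$. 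The main obstacle I anticipate is the bookkeeping for $\mathbf{A}$: the relations $A_1 A_2 = A_2 A_1 = 0$ render its Koszul complex markedly degenerate, so extracting $\sigma_T(\mathbf{A})$ and $\sigma_{Te}(\mathbf{A})$ cleanly requires patient use of the three-block decomposition of $\mathcal{K}_2$, and the Fredholm analogue of Lemma \ref{lem1}(i) requires a separate (though wholly parallel) verification.
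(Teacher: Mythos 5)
Your argument is correct, and it follows the same overall template as the paper's proof (a lower-triangular decomposition fed into Lemma \ref{lem1}(i), the Curto--Fialkow tensor-product formula of Lemma \ref{lem1}(ii) for the invariant piece, and a closure argument to upgrade the two inclusions to equality), but the decomposition you choose is genuinely different. The paper splits $\ell^2(\mathbb{Z}_+^2)$ by rows, compressing to rows $0$ and $1$ and restricting to rows $\geq 2$ (it must go two rows deep because row $1$ is not yet of tensor type), whereas you peel off the $0$th row and $0$th column together and keep the core $\mathcal{M}\cap\mathcal{N}$, which is exactly $(I\otimes W_\omega, W_\tau\otimes I)$; this makes the compressed pair $\mathbf{A}$ degenerate ($A_1A_2=A_2A_1=0$) and its Taylor spectrum a union of axis slices, which is if anything cleaner than the paper's $2\times 2$ block $\mathbf{A}$. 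The real divergence is in the essential spectrum: the paper observes that $W_{(\alpha,\beta)}$ is globally a compact perturbation of a genuine tensor-form pair (the rows $W_{\omega^{(0)}},W_{\omega^{(1)}},W_{\omega^{(2)}}$ differ by finite rank, and $D_1,D_2$ are compact perturbations of scalars), so $\sigma_{Te}$ is computed directly from Lemma \ref{lem1}(ii) with no triangular argument at all, while you rerun the triangular/closure argument using a Fredholm analogue of Lemma \ref{lem1}(i). That analogue is true and standard (it follows from the long exact homology sequence of the Koszul complexes, and is in the cited sources \cite{Cu1}, \cite{Appl}), and your closure step does go through since deleting the two axis circles from $\sigma_{Te}(\mathbf{C})$ leaves a dense subset of it; but you should be aware that this is the one ingredient you invoke that is not literally stated in the paper, and that the compact-perturbation route avoids it entirely. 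One small point worth making explicit in a write-up: hyponormality of $T_1,T_2$ is what forces $\omega_n\nearrow\Vert W_\omega\Vert$ and $\tau_n\nearrow\Vert W_\tau\Vert$, which you correctly use to identify $\sigma(W_\omega)=\Vert W_\omega\Vert\cdot\overline{\mathbb{D}}$, $\sigma_e(W_\omega)=\Vert W_\omega\Vert\cdot\mathbb{T}$ and to see that the perturbed shifts $\mathrm{shift}(x_1,\omega_1,\dots)$, $\mathrm{shift}(y_1,\tau_1,\dots)$ appearing in $\mathbf{A}$ have the same spectra.
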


\begin{proof}
We represent $W_{(\alpha ,\beta )}\equiv (T_{1},T_{2})$ by block matrices
relative to the decomposition
\begin{equation}
\ell ^{2}(\mathbb{Z}_{+}^{2})\cong \ell ^{2}(\mathbb{Z}_{+})\otimes \ell
^{2}(\mathbb{Z}_{+})\cong (\ell ^{2}(\mathbb{Z}_{+})\oplus \ell ^{2}(%
\mathbb{Z}_{+}))\oplus (\ell ^{2}(\mathbb{Z}_{+})\oplus \cdots) \text{.}  \label{decom}
\end{equation}%
\ Then, we obtain%
\begin{equation*}
T_{1}\equiv \left(
\begin{array}{cccc}
W_{\omega ^{(0)}} &  & \vdots  &  \\
& W_{\omega ^{(1)}} & \vdots  &  \\
\cdots  & \cdots  &  & \cdots  \\
&  & \vdots  & R_{1}%
\end{array}%
\right) \text{ and }T_{2}\equiv \left(
\begin{array}{cccc}
0 &  & \vdots  &  \\
D_{1} & 0 & \vdots  &  \\
\cdots  & \cdots  &  & \cdots  \\
& D_{2} & \vdots  & R_{2}%
\end{array}%
\right) \text{,}
\end{equation*}%
where $R_{1}:=\left(
\begin{array}{ccc}
W_{\omega ^{(2)}} &  &  \\
& W_{\omega ^{(2)}} &  \\
&  & \ddots
\end{array}%
\right) $ and $R_{2}:=\left(
\begin{array}{cccc}
0 &  &  &  \\
\tau _{1}I & 0 &  &  \\
& \tau _{2}I & 0 &  \\
&  & \ddots  & \ddots
\end{array}%
\right) $. \newline 
We first consider $\sigma _{T}(W_{(\alpha ,\beta )})$ of $%
W_{(\alpha ,\beta )}\equiv (T_{1},T_{2})$. \ Note the following:%
\begin{equation}
\begin{tabular}{l}
$\left\Vert W_{\omega }\right\Vert =\left\Vert W_{\omega ^{(0)}}\right\Vert
=\left\Vert W_{\omega ^{(1)}}\right\Vert =\left\Vert W_{\omega
^{(2)}}\right\Vert \text{ and}$ \\
$\left\Vert W_{\tau }\right\Vert =\left\Vert \mathrm{shift}\left( \tau
_{1},\tau _{2},\cdots \right) \right\Vert =\left\Vert \mathrm{shift}\left(
\frac{\omega _{0}y_{0}}{x_{0}x_{1}},\tau _{0},\tau _{1},\cdots \right)
\right\Vert \text{.}$%
\end{tabular}
\label{norm}
\end{equation}%
Thus, by Lemma \ref{lem1}(i) and (\ref{norm}), we have%
\begin{equation}
\begin{tabular}{l}
$\sigma _{T}(T_{1},T_{2})$ \\
$\subseteq \sigma _{T}\left( \left(
\begin{array}{cc}
W_{\omega ^{(0)}} & 0 \\
0 & W_{\omega ^{(1)}}%
\end{array}%
\right) ,\left(
\begin{array}{cc}
0 & 0 \\
D_{1} & 0%
\end{array}%
\right) \right) \cup \sigma _{T}\left( I \otimes W_{\omega ^{(2)}}, W_{\tau} \otimes I \right) $ \\
$\subseteq \sigma _{T}\left( W_{\omega ^{(0)}},0\right) \cup \sigma
_{T}\left( W_{\omega ^{(1)}},0\right) \cup \sigma _{T}\left( I \otimes W_{\omega
^{(2)}}, W_{\tau} \otimes I \right) $ \\
$=\left( \left\Vert W_{\omega }\right\Vert \cdot \overline{\mathbb{D}}\times
\{0\}\right) \cup \left( \left\Vert W_{\omega }\right\Vert \cdot \overline{%
\mathbb{D}}\times \left\Vert W_{\tau }\right\Vert \cdot \overline{\mathbb{D}}%
\right) =\left\Vert W_{\omega }\right\Vert \cdot \overline{\mathbb{D}}\times
\left\Vert W_{\tau }\right\Vert \cdot \overline{\mathbb{D}}$%
\end{tabular}
\label{2}
\end{equation}%
By Lemma \ref{lem1}(ii) and (\ref{norm}), we have%
\begin{equation}
\begin{tabular}{l}
$\sigma _{T}\left( I \otimes W_{\omega ^{(2)}}, W_{\tau} \otimes I \right) $ \\
$\subseteq \sigma _{T}\left( \left(
\begin{array}{cc}
W_{\omega ^{(0)}} & 0 \\
0 & W_{\omega ^{(1)}}%
\end{array}%
\right) ,\left(
\begin{array}{cc}
0 & 0 \\
D_{1} & 0%
\end{array}%
\right) \right) \cup \sigma _{T}(T_{1},T_{2})$ \\
$\Rightarrow \sigma _{T}\left( I \otimes W_{\omega ^{(2)}}, W_{\tau} \otimes I \right) $ \\
$\ \ \ \subseteq \sigma _{T}\left( W_{\omega ^{(0)}},0\right) \cup \sigma
_{T}\left( W_{\omega ^{(1)}},0\right) \cup \sigma _{T}(T_{1},T_{2})$ \\
$\Rightarrow \left\Vert W_{\omega }\right\Vert \cdot \overline{\mathbb{D}}%
\times \left\Vert W_{\tau }\right\Vert \cdot \overline{\mathbb{D}}\subseteq
\left\Vert W_{\omega }\right\Vert \cdot \overline{\mathbb{D}}\times
\{0\}\cup \sigma _{T}(T_{1},T_{2})$ \\
$\Rightarrow \left( \left\Vert W_{\omega }\right\Vert \cdot \overline{%
\mathbb{D}}\times \left\Vert W_{\tau }\right\Vert \cdot \overline{\mathbb{D}}%
\right) $ $\backslash $ $\left( \left\Vert W_{\omega }\right\Vert \cdot
\overline{\mathbb{D}}\times \{0\}\right) \subseteq \sigma _{T}(T_{1},T_{2})$.%
\end{tabular}
\label{3}
\end{equation}%
Since the Taylor spectrum $\sigma _{T}(T_{1},T_{2})$ is a closed set in $\mathbb{%
C}\times \mathbb{C}$, by (\ref{equa1}) and (\ref{equa2}), we can get%
\begin{equation}
\sigma _{T}\left( W_{(\alpha ,\beta )}\right) =\left( \left\Vert W_{\omega
}\right\Vert \cdot \overline{\mathbb{D}}\times \left\Vert W_{\tau
}\right\Vert \cdot \overline{\mathbb{D}}\right) \text{.}  \label{4}
\end{equation}%
We next consider the Taylor essential spectrum $\sigma _{Te}(T_{1},T_{2})$
of $W_{(\alpha ,\beta )}\equiv (T_{1},T_{2})$. \ Observe that $W_{\omega
^{(2)}}$ is a compact perturbation of $W_{\omega ^{(1)}}$ and $W_{\omega
^{(0)}}$. \ Also, $\frac{\omega _{0}y_{0}}{x_{0}x_{1}}I$ and $\tau _{0}I$ are compact perturbations of $D_{1}$ and $D_{2}$, respectively. \ Thus, we have%
\begin{equation}
\begin{tabular}{l}
$\sigma _{Te}(T_{1},T_{2})=\sigma _{Te}\left( I \otimes W_{\omega ^{(2)}}
, \mathrm{shift}\left( \frac{\omega _{0}y_{0}}{x_{0}x_{1}},\tau
_{0},\tau _{1},\cdots \right) \otimes I \right) $.%
\end{tabular}
\label{5}
\end{equation}%
By Lemma \ref{lem1}(ii) and (\ref{norm}), we note%
\begin{equation}
\begin{tabular}{l}
$\sigma _{Te}\left( I \otimes W_{\omega ^{(2)}}, \mathrm{shift}\left(
\frac{\omega _{0}y_{0}}{x_{0}x_{1}},\tau _{0},\tau _{1},\cdots \right) \otimes I
\right) $ \\
$=\sigma _{Te}\left( W_{\omega ^{(2)}}\right) \times \sigma _{T}\left(
W_{\tau }\right) \cup \sigma _{T}\left( W_{\omega ^{(2)}}\right) \times
\sigma _{Te}\left( W_{\tau }\right) $ \\
$=\left( \left\Vert W_{\omega }\right\Vert \cdot \mathbb{T}\times \left\Vert
W_{\tau }\right\Vert \cdot \overline{\mathbb{D}}\right) \cup \left(
\left\Vert W_{\omega }\right\Vert \cdot \overline{\mathbb{D}}\times
\left\Vert W_{\tau }\right\Vert \cdot \mathbb{T}\right) $.%
\end{tabular}
\label{6}
\end{equation}%
Therefore, our proof is now complete.
\end{proof}

\begin{theorem}
\label{thmtaylor-Alu}(Case of toral Aluthge Transform) \ Consider a commuting $2$-variable weighted shift $W_{(\alpha
,\beta )}\equiv (T_{1},T_{2})$ with weight diagram given by Figure \ref{Figure 2ex}(i). \ Assume also that $T_1$ and $T_2$ are hyponormal. \ Then
$$
\sigma _{T}\left( \widetilde{W}_{(\alpha ,\beta )}\right) =\left(
\left\Vert W_{\omega }\right\Vert \cdot \overline{\mathbb{D}}\times
\left\Vert W_{\tau }\right\Vert \cdot \overline{\mathbb{D}}\right)
$$
and
$$
\sigma _{Te}\left( \widetilde{W}_{(\alpha ,\beta )}\right) = \left(
\left\Vert W_{\omega }\right\Vert \cdot \mathbb{T}\times \left\Vert W_{\tau
}\right\Vert \cdot \overline{\mathbb{D}}\right) \cup \left( \left\Vert
W_{\omega }\right\Vert \cdot \overline{\mathbb{D}}\times \left\Vert W_{\tau
}\right\Vert \cdot \mathbb{T}\right) .
$$
\end{theorem}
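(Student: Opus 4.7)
The plan is to bootstrap Theorem \ref{thmtaylor} by re-applying it to $\widetilde{W}_{(\alpha,\beta)}$ itself, after verifying that $\widetilde{W}_{(\alpha,\beta)}$ is again a commuting hyponormal $2$-variable weighted shift whose weight diagram has the same structural shape as Figure \ref{Figure 2ex}(i), and that the relevant $1$-variable norms are preserved under the Aluthge transform.

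First, I would verify the structural hypotheses for $\widetilde{W}_{(\alpha,\beta)}$. Commutativity is handled by Proposition \ref{commuting1}: condition (\ref{prop1eq}) at $\mathbf{k}=(0,0)$ reads $\alpha_{(0,1)}\alpha_{(1,1)}=\alpha_{(1,0)}\alpha_{(0,2)}$, which in the notation of Figure \ref{Figure 2ex}(i) is $a\cdot\omega_0 = x_1\cdot(a\omega_0/x_1)$; for all other $\mathbf{k}$, the tensor structure of the core and the specific form of the first row and column make (\ref{prop1eq}) trivial. Hyponormality of $\widetilde{T}_1$ and $\widetilde{T}_2$ follows because each $T_i$ decomposes as a direct sum of hyponormal $1$-variable weighted shifts (one per row for $T_1$, one per column for $T_2$), and by (\ref{Alu-hypo}) the $1$-variable Aluthge transform sends such a shift to another hyponormal shift.

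Second, I would make the shape of $\widetilde{W}_{(\alpha,\beta)}$ explicit via Lemma \ref{CartAlu}. On the core $\mathcal{M}\cap\mathcal{N}$, the new horizontal weights are $\sqrt{\omega_{k_1-1}\omega_{k_1}}$ and the new vertical weights are $\sqrt{\tau_{k_2-1}\tau_{k_2}}$, so the core of $\widetilde{W}_{(\alpha,\beta)}$ is again of tensor form, now with generators $\widetilde{W}_\omega$ and $\widetilde{W}_\tau$. The first row and column remain backward extensions of the core rows and columns, with new initial data $\widetilde{x}_0=\sqrt{x_0 x_1}$, $\widetilde{x}_1=\sqrt{x_1\omega_1}$, $\widetilde{a}=\sqrt{a\omega_0}$, and analogous expressions for the vertical direction. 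Thus $\widetilde{W}_{(\alpha,\beta)}$ falls under the hypotheses of Theorem \ref{thmtaylor}, with $W_\omega$ replaced by $\widetilde{W}_\omega$ and $W_\tau$ replaced by $\widetilde{W}_\tau$.

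Third, applying Theorem \ref{thmtaylor} to $\widetilde{W}_{(\alpha,\beta)}$ yields
\[
\sigma_T\!\left(\widetilde{W}_{(\alpha,\beta)}\right)=\|\widetilde{W}_\omega\|\cdot\overline{\mathbb{D}}\times\|\widetilde{W}_\tau\|\cdot\overline{\mathbb{D}},
\]
and the analogous product-of-boundary formula for $\sigma_{Te}$. To conclude, I would identify these norms with the unreduced ones: since $W_\omega$ is hyponormal, so is $\widetilde{W}_\omega$ by (\ref{Alu-hypo}), and therefore $\|\widetilde{W}_\omega\|=r(\widetilde{W}_\omega)$ and $\|W_\omega\|=r(W_\omega)$; the invariance of the spectrum under the $1$-variable Aluthge transform \cite{JKP2} then gives $\|\widetilde{W}_\omega\|=\|W_\omega\|$, and similarly $\|\widetilde{W}_\tau\|=\|W_\tau\|$. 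Substituting back produces the desired formulas.

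The main obstacle I anticipate is the bookkeeping in the second step: one must confirm that the new boundary weights $\widetilde{\alpha}_{(k_1,0)}$, $\widetilde{\alpha}_{(0,k_2)}$ (and their $\beta$-analogues) really do furnish \emph{finite} perturbations of the core's tensor-form weights, so that the compact-perturbation reduction used in the proof of Theorem \ref{thmtaylor} (collapsing $W_{\omega^{(0)}}$, $W_{\omega^{(1)}}$, $D_1$, $D_2$ onto $W_{\omega^{(2)}}$, $\tau_0 I$, etc.) transfers verbatim. Once this finitely-supported-perturbation property is checked from the explicit formulas above, the block-matrix decomposition (\ref{decom}) and Lemma \ref{lem1} can be applied to $\widetilde{W}_{(\alpha,\beta)}$ in exactly the same manner as in Theorem \ref{thmtaylor}.
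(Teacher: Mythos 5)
Your proposal is correct and follows essentially the same route as the paper, whose entire proof is the observation that the weight diagram of $\widetilde{W}_{(\alpha,\beta)}$ has the same structure as that of $W_{(\alpha,\beta)}$, so that the argument of Theorem \ref{thmtaylor} applies verbatim. You usefully make explicit the one point the paper leaves implicit, namely $\|\widetilde{W}_\omega\|=\|W_\omega\|$ and $\|\widetilde{W}_\tau\|=\|W_\tau\|$ (which, for hyponormal shifts, also follows elementarily from $\sup_n\sqrt{\omega_n\omega_{n+1}}=\sup_n\omega_n$ for nondecreasing weights, without invoking \cite{JKP2}).
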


\begin{proof}
Since the structure of the weight diagram for $\widetilde{W}_{(\alpha ,\beta )}\equiv
\left( \widetilde{T}_{1},\widetilde{T}_{2}\right) $ is entirely similar to that of $W_{(\alpha
,\beta )}\equiv (T_{1},T_{2})$, the results follows by imitating the Proof of Theorem \ref{thmtaylor}.
\end{proof}

By the results of Theorems \ref{thmtaylor} and \ref{thmtaylor-Alu}, we easily obtain the following result.

\begin{corollary}
\label{Cor1}Consider a commuting $2$-variable weighted shift $W_{(\alpha ,\beta
)}\equiv (T_{1},T_{2})$ with weight diagram given by Figure \ref{Figure 2ex}(i). \ Assume also that $T_1$ and $T_2$ are hyponormal. \ Then
$$
\sigma _{T}\left( \widetilde{W}_{(\alpha ,\beta )}\right) =\sigma _{T}\left( {W}_{(\alpha ,\beta )}\right)
$$
and
$$
\sigma _{Te}\left( \widetilde{W}_{(\alpha ,\beta )}\right) =\sigma _{Te}\left({W}_{(\alpha ,\beta )}\right).
$$
\end{corollary}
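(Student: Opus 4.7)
The plan is to observe that Corollary \ref{Cor1} is essentially a bookkeeping consequence of the two preceding theorems. Indeed, Theorem \ref{thmtaylor} produces the explicit formulas
\begin{align*}
\sigma_T(W_{(\alpha,\beta)}) &= \|W_\omega\|\cdot\overline{\mathbb{D}}\times \|W_\tau\|\cdot\overline{\mathbb{D}},\\
\sigma_{Te}(W_{(\alpha,\beta)}) &= \bigl(\|W_\omega\|\cdot\mathbb{T}\times \|W_\tau\|\cdot\overline{\mathbb{D}}\bigr)\cup\bigl(\|W_\omega\|\cdot\overline{\mathbb{D}}\times \|W_\tau\|\cdot\mathbb{T}\bigr),
\end{align*}
while Theorem \ref{thmtaylor-Alu} produces exactly the same two sets for $\widetilde{W}_{(\alpha,\beta)}$. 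So my proposal is simply to invoke the two theorems and compare right-hand sides.

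More precisely, I would point out that the hypotheses of Theorem \ref{thmtaylor-Alu} are automatically available once the hypotheses of Theorem \ref{thmtaylor} hold: by Lemma \ref{CartAlu}, $\widetilde{W}_{(\alpha,\beta)}$ is itself a $2$-variable weighted shift with weight diagram of the same structural shape as in Figure \ref{Figure 2ex}(i), and hyponormality of $T_1,T_2$ together with (\ref{prop1eq}) ensures commutativity and hyponormality of $\widetilde{T}_1,\widetilde{T}_2$ (because the weights of $\widetilde{T}_i$ are the geometric means of consecutive weights of $T_i$, preserving monotonicity along rows/columns). Thus Theorem \ref{thmtaylor-Alu} applies and yields the same two descriptions, with the same norms $\|W_\omega\|$ and $\|W_\tau\|$ because the Aluthge transform of a $1$-variable weighted shift has the same norm as the original (the sup of $\sqrt{\omega_n\omega_{n+1}}$ equals $\sup \omega_n$ when the sequence is bounded and monotone increasing in the hyponormal case, and in any event both quantities equal the operator norm).

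The main (very mild) obstacle is just verifying that when we apply Theorem \ref{thmtaylor-Alu} to $\widetilde{W}_{(\alpha,\beta)}$, the two ``norms'' appearing in its conclusion are the same $\|W_\omega\|$ and $\|W_\tau\|$ that appear in Theorem \ref{thmtaylor}. This is checked by observing that the ``row'' weighted shift governing the core of $\widetilde{W}_{(\alpha,\beta)}$ is $\widetilde{W}_\omega = \mathrm{shift}(\sqrt{\omega_0\omega_1},\sqrt{\omega_1\omega_2},\ldots)$ with $\|\widetilde{W}_\omega\|=\|W_\omega\|$, and similarly for $\widetilde{W}_\tau$. Once this identification of norms is in hand, the corollary follows by literally equating the two displayed formulas, which is routine enough that a one-line proof citing Theorems \ref{thmtaylor} and \ref{thmtaylor-Alu} suffices.
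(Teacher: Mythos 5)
Your proposal is correct and matches the paper's own argument, which simply cites Theorems \ref{thmtaylor} and \ref{thmtaylor-Alu} and equates the two identical right-hand sides. Note only that the hypotheses of Theorem \ref{thmtaylor-Alu} are stated in terms of $W_{(\alpha,\beta)}$ itself (with the conclusion already expressed via $\left\Vert W_{\omega}\right\Vert$ and $\left\Vert W_{\tau}\right\Vert$), so your extra verification about the structure and norms of $\widetilde{W}_{(\alpha,\beta)}$ is not needed for the corollary, though it is exactly the kind of check underlying the proof of Theorem \ref{thmtaylor-Alu}.
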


\begin{remark}
\label{Re 3} We note that the commutativity property is required to
check the Taylor spectrum (resp. Taylor essential spectrum) of $\widetilde{W}%
_{(\alpha ,\beta )}$. \ Thus, if $W_{(\alpha ,\beta )}\in \mathcal{TC}$, then $\widetilde{W}_{(\alpha ,\beta )}$ is commuting. \newline
(ii) \ By Corollary \ref{Cor1}, we can see that the Taylor spectrum (resp.
Taylor essential spectrum) of $\widetilde{W}_{(\alpha ,\beta )}$ equals that
of $W_{(\alpha ,\beta )}$ when $W_{(\alpha ,\beta )}$ is commuting and $T_1$ and $T_2$ are hyponormal. \
Thus, Corollary \ref{Cor1} gives a partial solution to Problem \ref{problem
3}.
\end{remark}

We now turn our attention to the case of the spherical Aluthge Transform. \ We need a preliminary result.

\begin{proposition} \label{proptensor}
Let $W_{(\alpha ,\beta )} \in \mathcal{TC}$, with core $c(W_{(\alpha ,\beta )})=(I \otimes W_{\sigma}, W_{\tau} \otimes I)$. \ Then $\widehat{W}_{(\alpha
,\beta )}\in \mathcal{TC}$ if and only $c(W_{(\alpha ,\beta )})=(rI \otimes U_+, W_{\tau} \otimes I)$ or 
$c(W_{(\alpha ,\beta )})=(I \otimes W_{\sigma}, U_+ \otimes sI)$ for some $r,s>0$.
\end{proposition}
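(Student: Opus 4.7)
The plan is to translate $\widehat{W}_{(\alpha,\beta)} \in \mathcal{TC}$ at the level of weights via Lemma \ref{PolarAlu}, and then to extract the algebraic dichotomy forced by the tensor-form requirement.

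First, since $W_{(\alpha,\beta)} \in \mathcal{TC}$ with $c(W_{(\alpha,\beta)}) = (I\otimes W_\sigma, W_\tau \otimes I)$, the weights on the core region $\{(k_1,k_2): k_1\geq 1, k_2 \geq 1\}$ have the separated form $\alpha_{(k_1,k_2)} = a_{k_1}$ and $\beta_{(k_1,k_2)} = b_{k_2}$ for suitable positive sequences $\{a_{k_1}\}_{k_1\geq 1}$ and $\{b_{k_2}\}_{k_2\geq 1}$ encoding $W_\sigma$ and $W_\tau$. Since $\mathcal{M}$ and $\mathcal{N}$ are defined purely via the canonical basis (independently of the weights), the core of $\widehat{W}_{(\alpha,\beta)}$ is again its restriction to the subspace $\mathcal{M}\cap \mathcal{N}$; moreover the neighboring points $(k_1+1,k_2)$ and $(k_1,k_2+1)$ invoked in Lemma \ref{PolarAlu} remain in the core, so the calculation only uses core weights.

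Next, applying Lemma \ref{PolarAlu} I would record, for $(k_1,k_2)$ in the core,
\[
\widehat{\alpha}_{(k_1,k_2)} = a_{k_1}\left(\frac{a_{k_1+1}^2+b_{k_2}^2}{a_{k_1}^2+b_{k_2}^2}\right)^{1/4}, \qquad \widehat{\beta}_{(k_1,k_2)} = b_{k_2}\left(\frac{a_{k_1}^2+b_{k_2+1}^2}{a_{k_1}^2+b_{k_2}^2}\right)^{1/4},
\]
and then rephrase $\widehat{W}_{(\alpha,\beta)} \in \mathcal{TC}$ as the twin requirements that $\widehat{\alpha}_{(k_1,k_2)}$ be independent of $k_2$ and $\widehat{\beta}_{(k_1,k_2)}$ independent of $k_1$ on the core.

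The crux is the elementary observation that the function
\[
t \mapsto \frac{a_{k_1+1}^2 + t}{a_{k_1}^2 + t}
\]
is either identically $1$ (precisely when $a_{k_1+1}=a_{k_1}$) or strictly monotone in $t$ (its derivative equals $(a_{k_1}^2-a_{k_1+1}^2)/(a_{k_1}^2+t)^2$). Consequently, for each $k_1 \geq 1$ the independence from $k_2$ of the ratio in $\widehat{\alpha}_{(k_1,k_2)}$ forces either $a_{k_1+1} = a_{k_1}$ or the sequence $\{b_{k_2}^2\}_{k_2\geq 1}$ to be constant. A symmetric argument applied to $\widehat{\beta}$ yields the same global dichotomy: either $\{a_{k_1}\}_{k_1\geq 1}$ is constant, say $\equiv r > 0$, or $\{b_{k_2}\}_{k_2\geq 1}$ is constant, say $\equiv s > 0$. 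These two alternatives translate exactly into $c(W_{(\alpha,\beta)}) = (rI\otimes U_+, W_\tau \otimes I)$ and $c(W_{(\alpha,\beta)}) = (I \otimes W_\sigma, U_+ \otimes sI)$, respectively.

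The converse is a direct substitution: assuming one of the core sequences is constant, Lemma \ref{PolarAlu} immediately shows that the corresponding component of $\widehat{W}_{(\alpha,\beta)}$ on the core retains separated form (indeed, the auxiliary ratio collapses either to $1$ in the $\widehat{\beta}$-factor or to an expression depending only on $k_1$ in the $\widehat{\alpha}$-factor). The main---and essentially only---obstacle is therefore the bookkeeping needed to rule out the mixed scenario in which neither $\{a_{k_1}\}$ nor $\{b_{k_2}\}$ is constant; once the strict monotonicity of the rational function of $t$ is in hand, everything else is routine verification.
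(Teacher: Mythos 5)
Your proposal is correct and follows essentially the same route as the paper: reduce to the core via $c(\widehat{W}_{(\alpha,\beta)})=\widehat{c(W_{(\alpha,\beta)})}$, write the transformed weights with Lemma \ref{PolarAlu}, and observe that independence of the auxiliary ratio forces, for each $\mathbf{k}$, either $\sigma_{k_1+1}=\sigma_{k_1}$ or $\tau_{k_2+1}=\tau_{k_2}$, whence one of the two sequences is globally constant. The only cosmetic difference is that you phrase the key algebraic step as strict monotonicity of $t\mapsto \frac{a_{k_1+1}^2+t}{a_{k_1}^2+t}$, while the paper factors the cross-multiplied identity as $(\tau_{k_2+1}^2-\tau_{k_2}^2)(\sigma_{k_1}^2-\sigma_{k_1+1}^2)=0$; these are the same fact.
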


\begin{proof}
By Lemma \ref{PolarAlu}, we recall that
\begin{equation}
\widehat{T}_{1}e_{\mathbf{k}}=\alpha _{\mathbf{k}}\frac{(\alpha _{\mathbf{k}+%
\mathbf{\epsilon }_{1}}^{2}+\beta _{\mathbf{k}+\mathbf{\epsilon }%
_{1}}^{2})^{1/4}}{(\alpha _{\mathbf{k}}^{2}+\beta _{\mathbf{k}}^{2})^{1/4}}%
e_{\mathbf{k}+\mathbf{\epsilon }_{1}}\text{ and }\widehat{T}_{2}e_{\mathbf{k}%
}=\beta _{\mathbf{k}}\frac{(\alpha _{\mathbf{k}+\mathbf{\epsilon }%
_{2}}^{2}+\beta _{\mathbf{k}+\mathbf{\epsilon }_{2}}^{2})^{1/4}}{(\alpha _{%
\mathbf{k}}^{2}+\beta _{\mathbf{k}}^{2})^{1/4}}e_{\mathbf{k}+\mathbf{%
\epsilon }_{2}}  \label{Weight of Polar}
\end{equation}

Since $W_{(\alpha ,\beta )}\in \mathcal{TC}$, and since $\widehat{W}_{(\alpha ,\beta )}$ leaves the subspace $\mathcal{M} \cap \mathcal{N}$ invariant, it readily follows that the spherical Aluthge transform of $c(W_{(\alpha ,\beta )})$ is $c(\widehat{W}_{(\alpha ,\beta )})$. \ As a result, we may assume, without loss of generality, that $W_{(\alpha ,\beta )}\equiv (I \otimes W_{\sigma },W_{\tau
}\otimes I)$, where $W_{\sigma }$ and $W_{\tau }$ are unilateral weighted shifts. \ Now, by (\ref{Weight of Polar}), for all $\mathbf{k}%
\equiv (k_{1},k_{2})\in \mathbb{Z}_{+}^{2}$ we have
\begin{eqnarray*}
\widehat{T}_{1}e_{\mathbf{k}} &=&\widehat{T}_{1}e_{\mathbf{k}+\mathbf{%
\epsilon }_{2}}\Longleftrightarrow \frac{\sigma _{k_{1}+1}^{2}+\tau
_{k_{2}}^{2}}{\sigma _{k_{1}}^{2}+\tau _{k_{2}}^{2}}=\frac{\sigma _{k_{1}+%
1}^{2}+\tau _{k_{2}+1}^{2}}{\sigma _{k_{1}}^{2}+\tau
_{k_{2}+1}^{2}} \Longleftrightarrow  \widehat{T}_{2}e_{\mathbf{k}} =\widehat{T}_{1}e_{\mathbf{k}+\mathbf{\epsilon }_{1}}\\
&\Longleftrightarrow &\left( \tau _{k_{2}+1}^{2}-\tau
_{k_{2}}^{2}\right) \left(\sigma _{k_{1}}^{2} - \sigma_{k_{1}+1}^{2} \right) 
\Longleftrightarrow \tau_{k_{2}+1}=\tau _{k_{2}} \; \; \textrm{or } \; \; \sigma _{k_{1}+1}=\sigma
_{k_{1}}.
\end{eqnarray*}
If there exists $k_2 \in \mathbb{Z}_+$ such that $\tau_{k_{2}+1} \ne \tau _{k_{2}}$, then for all $k_1 \in \mathbb{Z}_+$ we must have $\sigma_{k_1+1}=\sigma_{k_1}$; that is, $\sigma_{k_1}=\sigma_0$ for all $k_1$. \ On the other hand, if $\tau_{k_{2}+1}=\tau _{k_{2}}$ for all $k_2 \in \mathbb{Z}_+$, then $\tau_{k_2}=\tau_0$ for all $k_2$. \ This completes the proof.
\end{proof}

We are now ready to state
\begin{theorem} (Case of spherical Aluthge Transform) \label{thm87}
Let $W_{(\alpha ,\beta )}\equiv \left( T_{1},T_{2}\right) \in \mathcal{TC}$
be as in Proposition 8.6. \ Assume also that $T_1$ and $T_2$ are hyponormal. \ Then
\begin{equation*}
\sigma _{T}\left(\widehat{W}_{(\alpha ,\beta )}\right) =\sigma _{T}\left(W_{(\alpha ,\beta )}\right) \text{ and }\sigma _{T_{e}}\left(\widehat{W}_{(\alpha
,\beta )}\right) =\sigma _{T_{e}}\left(W_{(\alpha ,\beta)}\right) \text{.}
\end{equation*}
\end{theorem}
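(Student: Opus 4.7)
The approach is to apply Theorem \ref{thmtaylor} to both $W_{(\alpha,\beta)}$ and its spherical Aluthge transform, and then to match the resulting spectral formulas by showing that the one-variable shift norms which enter those formulas are preserved. First, by Proposition \ref{proptensor}, the hypothesis forces $c(W_{(\alpha,\beta)})$ to be either $(rI\otimes U_+, W_\tau\otimes I)$ or $(I\otimes W_\sigma, U_+\otimes sI)$ for some positive $r$ or $s$; I would treat the first case (the second being symmetric), so that $\omega_n \equiv r$ in the notation of Figure \ref{Figure 2ex}(i). Theorem \ref{thmtaylor} then gives
\[
\sigma_T(W_{(\alpha,\beta)}) = (r\overline{\mathbb{D}})\times (\|W_\tau\|\overline{\mathbb{D}}),
\]
together with the companion formula for $\sigma_{Te}$.

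Next I would use Lemma \ref{PolarAlu} to describe $\widehat{W}_{(\alpha,\beta)}$. A short calculation in the core, starting from $\alpha_{\mathbf{k}}=r$ and $\beta_{\mathbf{k}}=\tau_{k_2}$, shows that $\widehat{\alpha}_{\mathbf{k}}=r$ (the radicals cancel) while $\widehat{\beta}_{\mathbf{k}} = \tau_{k_2}(r^2+\tau_{k_2+1}^2)^{1/4}/(r^2+\tau_{k_2}^2)^{1/4}$ depends only on $k_2$; the resulting weight sequence $\widehat{\tau}$ produces $c(\widehat{W}_{(\alpha,\beta)}) = (rI\otimes U_+, W_{\widehat{\tau}}\otimes I)$, which is again of the tensor form demanded by Proposition \ref{proptensor}. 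Since $\widehat{W}_{(\alpha,\beta)}$ is commuting by Proposition \ref{Prop1} and hyponormal by Proposition \ref{Proposition1}, Theorem \ref{thmtaylor} applies once more and yields
\[
\sigma_T(\widehat{W}_{(\alpha,\beta)}) = (r\overline{\mathbb{D}})\times(\|W_{\widehat{\tau}}\|\overline{\mathbb{D}}),
\]
with the analogous formula for $\sigma_{Te}$.

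Everything thus reduces to the identity $\|W_{\widehat{\tau}}\| = \|W_\tau\|$. Since $W_\tau$ is hyponormal, $(\tau_k)$ is nondecreasing with $\tau_k \to \|W_\tau\|$; inserting this into the closed-form expression for $\widehat{\tau}_k$ immediately gives $\lim_k \widehat{\tau}_k = \|W_\tau\|\cdot 1 = \|W_\tau\|$, and the hyponormality of $W_{\widehat{\tau}}$ (inherited from $\widehat{W}_{(\alpha,\beta)}$) upgrades this limit to the supremum, and hence to the norm. The symmetric case of Proposition \ref{proptensor} is handled identically by reversing the roles of the two variables. The main technical obstacle I anticipate is the rigorous verification that the weight diagram of $\widehat{W}_{(\alpha,\beta)}$ really does fall within the scope of Theorem \ref{thmtaylor}: although the tensor structure of the core is manifestly preserved by the calculation above, one must also track how the non-core extension parameters (playing the roles of $x_0, y_0, a$ in Figure \ref{Figure 2ex}(i)) transform under $W_{(\alpha,\beta)}\mapsto \widehat{W}_{(\alpha,\beta)}$ in order to confirm that the overall pattern required by that theorem survives.
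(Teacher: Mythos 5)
Your proposal follows essentially the same route as the paper: reduce via Proposition \ref{proptensor} to the case $c(W_{(\alpha ,\beta )})=(rI\otimes U_{+},W_{\tau }\otimes I)$, compute the weights of $\widehat{W}_{(\alpha ,\beta )}$ from Lemma \ref{PolarAlu}, observe that each new weight is the old one times a telescoping correction factor tending to $1$, and conclude that the norms entering the spectral formulas of Theorem \ref{thmtaylor} are unchanged. The one loose end you flag --- tracking the non-core parameters --- is exactly what the paper supplies: it writes the zeroth row and zeroth column of $\widehat{W}_{(\alpha ,\beta )}$ as Schur products of the original shifts with correction shifts whose weights tend to $1$, which both preserves the relevant norms (so the argument of Theorem \ref{thmtaylor} can be repeated verbatim for $\widehat{W}_{(\alpha ,\beta )}$ rather than invoked as a black box) and exhibits those rows and columns as compact perturbations of the originals, yielding the equality of Taylor essential spectra.
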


\begin{proof}
By Proposition \ref{proptensor}, without loss of generality we may assume 
$$
c\left( W_{(\alpha
,\beta )}\right) \equiv \left( rI\otimes U_{+},W_{\tau }\otimes I\right),
$$
for some $r>0$. \ Recall that
\begin{equation}
\widehat{T}_{1}e_{\mathbf{k}}=\alpha _{\mathbf{k}}\frac{(\alpha _{\mathbf{k}+%
\mathbf{\epsilon }_{1}}^{2}+\beta _{\mathbf{k}+\mathbf{\epsilon }%
_{1}}^{2})^{1/4}}{(\alpha _{\mathbf{k}}^{2}+\beta _{\mathbf{k}}^{2})^{1/4}}%
e_{\mathbf{k}+\mathbf{\epsilon }_{1}}\text{ and }\widehat{T}_{2}e_{\mathbf{k}%
}=\beta _{\mathbf{k}}\frac{(\alpha _{\mathbf{k}+\mathbf{\epsilon }%
_{2}}^{2}+\beta _{\mathbf{k}+\mathbf{\epsilon }_{2}}^{2})^{1/4}}{(\alpha _{%
\mathbf{k}}^{2}+\beta _{\mathbf{k}}^{2})^{1/4}}e_{\mathbf{k}+\mathbf{%
\epsilon }_{2}}\text{.}  \label{Weight of Polar1}
\end{equation}

By (\ref{Weight of Polar1}), we obtain
\begin{equation*}
c\left( \widehat{W}_{(\alpha ,\beta )}\right) \equiv \left( rI\otimes
U_{+},W_{\tau }\otimes I\right) \circ \left( I\otimes U_{+},W_{\rho}\otimes
I\right) \text{,}
\end{equation*}%
where $W_{\rho}\equiv \mathrm{shift}\left( \left( \frac{r^{2}+\tau _{1}^{2}}{%
r^{2}+\tau _{0}^{2}}\right) ^{\frac{1}{4}},\left( \frac{r^{2}+\tau _{2}^{2}}{%
r^{2}+\tau _{1}^{2}}\right) ^{\frac{1}{4}},\left( \frac{r^{2}+\tau _{3}^{2}}{%
r^{2}+\tau _{2}^{2}}\right) ^{\frac{1}{4}},\cdots \right) $ with $\left\Vert
W_{r}\right\Vert =1$ and $\circ $ denotes Schur product. Note that%
\begin{equation*}
\begin{tabular}{l}
$\mathrm{shift}\left( \widehat{\alpha }_{\left( 0,0\right) },\widehat{\alpha
}_{\left( 1,0\right) },\cdots \right) $ \\
$=\mathrm{shift}\left( \alpha _{\left( 0,0\right) },\alpha _{\left(
1,0\right) },\cdots \right) \circ \mathrm{shift}\left( \left( \frac{\alpha
_{\left( 1,0\right) }^{2}+\beta _{\left( 1,0\right) }^{2}}{\alpha _{\left(
0,0\right) }^{2}+\beta _{\left( 0,0\right) }^{2}}\right) ^{\frac{1}{4}%
},\left( \frac{\alpha _{\left( 2,0\right) }^{2}+\beta _{\left( 2,0\right)
}^{2}}{\alpha _{\left( 1,0\right) }^{2}+\beta _{\left( 1,0\right) }^{2}}%
\right) ^{\frac{1}{4}},\cdots \right) $%
\end{tabular}%
\end{equation*}%
and%
\begin{equation*}
\begin{tabular}{l}
$\mathrm{shift}\left( \widehat{\beta }_{\left( 0,0\right) },\widehat{\beta }%
_{\left( 1,0\right) },\cdots \right) $ \\
$=\mathrm{shift}\left( \beta _{\left( 0,0\right) },\beta _{\left( 1,0\right)
},\cdots \right) \circ \mathrm{shift}\left( \left( \frac{\alpha _{\left(
0,1\right) }^{2}+\beta _{\left( 0,1\right) }^{2}}{\alpha _{\left( 0,0\right)
}^{2}+\beta _{\left( 0,0\right) }^{2}}\right) ^{\frac{1}{4}},\left( \frac{%
\alpha _{\left( 0,2\right) }^{2}+\beta _{\left( 0,2\right) }^{2}}{\alpha
_{\left( 0,1\right) }^{2}+\beta _{\left( 0,1\right) }^{2}}\right) ^{\frac{1}{%
4}},\cdots \right) .$%
\end{tabular}%
\end{equation*}%
Since
\begin{equation*}
\begin{tabular}{l}
$\left\Vert \mathrm{shift}\left( \left( \frac{\alpha _{\left( 1,0\right)
}^{2}+\beta _{\left( 1,0\right) }^{2}}{\alpha _{\left( 0,0\right)
}^{2}+\beta _{\left( 0,0\right) }^{2}}\right) ^{\frac{1}{4}},\left( \frac{%
\alpha _{\left( 2,0\right) }^{2}+\beta _{\left( 2,0\right) }^{2}}{\alpha
_{\left( 1,0\right) }^{2}+\beta _{\left( 1,0\right) }^{2}}\right) ^{\frac{1}{%
4}},\left( \frac{\alpha _{\left( 3,0\right) }^{2}+\beta _{\left( 3,0\right)
}^{2}}{\alpha _{\left( 2,0\right) }^{2}+\beta _{\left( 2,0\right) }^{2}}%
\right) ^{\frac{1}{4}},\cdots \right) \right\Vert $ \\
$=\left\Vert \mathrm{shift}\left( \left( \frac{\alpha _{\left( 0,1\right)
}^{2}+\beta _{\left( 0,1\right) }^{2}}{\alpha _{\left( 0,0\right)
}^{2}+\beta _{\left( 0,0\right) }^{2}}\right) ^{\frac{1}{4}},\left( \frac{%
\alpha _{\left( 0,2\right) }^{2}+\beta _{\left( 0,2\right) }^{2}}{\alpha
_{\left( 0,1\right) }^{2}+\beta _{\left( 0,1\right) }^{2}}\right) ^{\frac{1}{%
4}},\left( \frac{\alpha _{\left( 0,3\right) }^{2}+\beta _{\left( 0,3\right)
}^{2}}{\alpha _{\left( 0,2\right) }^{2}+\beta _{\left( 0,2\right) }^{2}}%
\right) ^{\frac{1}{4}},\cdots \right) \right\Vert =1$,%
\end{tabular}%
\end{equation*}%
we have%
\begin{equation} \label{eq199}
\left\Vert \mathrm{shift}\left( \widehat{\alpha }_{\left( 0,0\right) },%
\widehat{\alpha }_{\left( 1,0\right) },\widehat{\alpha }_{\left( 2,0\right)
},\cdots \right) \right\Vert =\left\Vert \mathrm{shift}\left( \alpha
_{\left( 0,0\right) },\alpha _{\left( 1,0\right) },\alpha _{\left(
2,0\right) },\cdots \right) \right\Vert
\end{equation}%
and%
\begin{equation} \label{eq200}
\left\Vert \mathrm{shift}\left( \widehat{\beta }_{\left( 0,0\right) },%
\widehat{\beta }_{\left( 1,0\right) },\widehat{\beta }_{\left( 2,0\right)
},\cdots \right) \right\Vert =\left\Vert \mathrm{shift}\left( \beta _{\left(
0,0\right) },\beta _{\left( 0,1\right) },\beta _{\left( 0,2\right) },\cdots
\right) \right\Vert \text{.}
\end{equation}%
Thus, by the method used in the Proof of Theorem 8.2 we have
$$
\sigma _{T}\left(\widehat{W}_{(\alpha ,\beta )}\right) =\sigma _{T}\left(W_{(\alpha ,\beta )}\right). 
$$
Observe now that (\ref{eq199}) and (\ref{eq200}) also show that the weighted shifts associated with $0$-th row and the $0$-th column of $\widehat{W}_{(\alpha ,\beta )}$ are compact perturbations of the corresponding weighted shifts for $W_{(\alpha ,\beta )}$. \ As a result, it is straightforward to conclude that   
$$
\sigma _{T_{e}}\left(\widehat{W}_{(\alpha
,\beta )}\right) =\sigma _{T_{e}}\left(W_{(\alpha ,\beta)}\right).
$$
This completes the proof of the theorem.
\end{proof}

In view of Corollary \ref{Cor1}, Remark \ref{Re 3} and Theorem \ref{thm87}, it is natural to formulate the following

\begin{conjecture}
Let $W_{(\alpha ,\beta )}$ be a commuting $2$-variable weighted shift, whose toral and spherical Aluthge transforms are also commuting. \ Then $W_{(\alpha ,\beta )}$, $\widetilde{W}%
_{(\alpha ,\beta )}$ and $\widehat{W}%
_{(\alpha ,\beta )}$ all have the same Taylor spectrum and the same Taylor essential spectrum.
\end{conjecture}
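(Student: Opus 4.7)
The plan is to leverage intertwining relations between a $2$-variable weighted shift and its Aluthge transforms, together with a perturbation-and-continuity argument using Theorems \ref{ContinuityC} and \ref{ContinuityP}.

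First, I would record two basic intertwinings obtained by direct computation. For the toral transform,
$$|T_i|^{1/2}\, T_i = \widetilde{T}_i\, |T_i|^{1/2} \quad (i=1,2),$$
with a \emph{separate} intertwiner for each coordinate. For the spherical transform, using the joint polar decomposition $(T_1,T_2) = (V_1 P, V_2 P)$ with $P = \sqrt{T_1^*T_1 + T_2^*T_2}$, one obtains the \emph{simultaneous} intertwining
$$P^{1/2}\, T_i = \widehat{T}_i\, P^{1/2} \quad (i=1,2).$$
In the weighted-shift setting, $|T_1|$, $|T_2|$, and $P$ are positive diagonal operators with diagonal entries $\alpha_{\mathbf{k}}$, $\beta_{\mathbf{k}}$, and $\sqrt{\alpha_{\mathbf{k}}^2+\beta_{\mathbf{k}}^2}$, respectively.

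Next, I would settle the case in which the intertwiners are bounded below. For the spherical transform this is immediate: $P^{1/2}$ is then invertible and $(\widehat{T}_1,\widehat{T}_2) = P^{1/2}(T_1,T_2)\, P^{-1/2}$ is an honest similarity, so Taylor and Taylor essential spectra are preserved at once. For the toral transform I would apply $|T_i|^{1/2}$ to the $i$-th slot of the Koszul boundary map $D_{\mathbf{T} - \boldsymbol{\lambda}}$; using the commutativity of $\widetilde{W}_{(\alpha,\beta)}$ (to ensure the transformed complex is genuinely a Koszul complex) together with invertibility of each $|T_i|^{1/2}$, one converts exactness of $K(\mathbf{T}-\boldsymbol{\lambda},\mathcal{H})$ into exactness of $K(\widetilde{\mathbf{T}}-\boldsymbol{\lambda},\mathcal{H})$, and conversely.

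For the general non-invertible case, I would approximate the shift by a family $W^{(\epsilon)}_{(\alpha,\beta)}$ whose weights satisfy $\inf_{\mathbf{k}}((\alpha^{(\epsilon)}_{\mathbf{k}})^2 + (\beta^{(\epsilon)}_{\mathbf{k}})^2) > 0$ and which preserves both the commutativity condition (\ref{commuting}) for $W^{(\epsilon)}_{(\alpha,\beta)}$ and the condition (\ref{prop1eq}) needed for commutativity of $\widetilde{W}^{(\epsilon)}_{(\alpha,\beta)}$. Norm convergence of the perturbed shifts as $\epsilon \to 0^+$ then transfers to their toral and spherical Aluthge transforms via Theorems \ref{ContinuityC} and \ref{ContinuityP}, and upper semi-continuity of $\sigma_T(\cdot)$ and $\sigma_{Te}(\cdot)$ yields one direction of inclusion in the limit.

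The main obstacle will be securing the reverse inclusion, since Taylor spectra are not in general lower semi-continuous under norm convergence. To close this gap I would exploit the rigidity of the weighted-shift setting: the Jung--Ko--Pearcy theorem already guarantees that the one-variable spectra $\sigma(T_i)=\sigma(\widetilde{T}_i)$ agree, and a Fredholm-index argument — analogous to the compact-perturbation arguments used in the proofs of Theorems \ref{thmtaylor-Alu} and \ref{thm87} — should control the Taylor essential spectrum. A secondary obstacle is designing an $\epsilon$-perturbation that simultaneously preserves both (\ref{commuting}) and (\ref{prop1eq}); a naive additive perturbation of the weights destroys the latter, so a structure-respecting perturbation (for example, a multiplicative one, or one acting at the level of the Berger-type data when available) is likely required.
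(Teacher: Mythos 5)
You should first note that the statement you are proving is stated in the paper as an open \emph{conjecture}: the paper itself only establishes it for the restricted class $\mathcal{TC}$ with hyponormal coordinates (Theorems \ref{thmtaylor}, \ref{thmtaylor-Alu}, \ref{thm87} and Corollary \ref{Cor1}), via explicit block decompositions and compact-perturbation arguments. So your argument would have to stand entirely on its own, and it does not. The most serious gap is the toral case, which you treat as settled when the intertwiners are bounded below. The relations $|T_i|^{1/2}T_i=\widetilde{T}_i|T_i|^{1/2}$ use a \emph{different} intertwiner in each coordinate, and ``applying $|T_i|^{1/2}$ to the $i$-th slot'' of $D_{\mathbf{T}-\lambda}$ is not a chain map between the Koszul complexes: a chain map would require a single degree-zero operator $A$ satisfying $\widetilde{T}_1A=B_1T_1$ and $\widetilde{T}_2A=B_2T_2$ simultaneously, which (when the $\widetilde{T}_i$ are injective) forces $A=|T_1|^{1/2}=|T_2|^{1/2}$ — impossible except in degenerate cases. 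Hence exactness is not transferred in either direction, and no similarity is available. Recall that even in one variable the equality $\sigma(T)=\sigma(\widetilde{T})$ of Jung--Ko--Pearcy is not a similarity argument but rests on $\sigma(AB)\cup\{0\}=\sigma(BA)\cup\{0\}$; it is precisely a multivariable substitute for that identity that your outline is missing. (The spherical invertible case is fine: $P^{1/2}$ implements a genuine joint similarity, and Taylor and Taylor essential spectra are similarity invariants.)

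The limiting step also does not close the argument. Even granting a perturbation that preserves both (\ref{commuting}) and (\ref{prop1eq}) and granting upper semicontinuity of $\sigma_T$ and $\sigma_{Te}$ along that family (which is itself a nontrivial claim for the Taylor spectrum and must be proved, not cited), you obtain at most one inclusion, as you acknowledge. The devices you propose for the reverse inclusion do not suffice: the coordinate-wise equality $\sigma(T_i)=\sigma(\widetilde{T}_i)$ does not determine the Taylor spectrum of the pair (it only bounds it inside a Cartesian product), and the compact-perturbation/Fredholm-index arguments behind Theorems \ref{thmtaylor-Alu} and \ref{thm87} depend essentially on the core being of tensor form; in general neither $\widetilde{W}_{(\alpha,\beta)}-W_{(\alpha,\beta)}$ nor $\widehat{W}_{(\alpha,\beta)}-W_{(\alpha,\beta)}$ is compact (already in one variable, $W_{\omega}=\mathrm{shift}\left(\tfrac12,2,\tfrac12,2,\cdots\right)$ has $\widetilde{W}_{\omega}-W_{\omega}$ non-compact, and $2$-variable shifts built from such data inherit this). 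What you have is a reasonable program — intertwine, settle the invertible case, approximate — but both the toral invertible case and the reverse spectral inclusion in the limit are genuinely open in your write-up, so the statement remains, as in the paper, a conjecture.
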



\section{Aluthge Transforms of the Drury-Arveson Shift}

In this section we consider the Drury-Arveson $2$-variable weighted shift $DA$, whose weight sequences are given by

\begin{eqnarray}
\alpha_{(k_1,k_2)}&:=&\sqrt{\frac{k_1+1}{k_1+k_2+1}} \; \; (\textrm{for all} \; k_1,k_2 \ge 0) \\
\beta_{(k_1,k_2)}&:=&\alpha_{(k_2,k_1)} \; \; (\textrm{for all} \; k_1,k_2 \ge 0).
\end{eqnarray}

If we denote the successive rows of the weight diagram of $DA$ by $R_1, R_2, \cdots $, it is easy to see that $R_1=A_1$, the (unweighted) unilateral shift, $R_2=A_2$, the Bergman shift and, more generally, $R_j=A_j$, the Agler $j$-th shift ($j \ge 2$); in particular, all rows and columns are subnormal weighted shifts. \ For $j \ge 2$, the Berger measure of $A_j$ is $(j-1)s^{j-2}ds$ on the closed interval $[0,1]$, and therefore all the Berger measures associated with rows $1, 2, 3, \cdots$ are mutually absolutely continuous, a necessary condition for the subnormality of $DA$. \ However, the Berger measure of the first row ($ds$ on $[0,1]$) is not absolutely continuous with respect to $\delta_1$ (which is the Berger measure of $U_+$, the zeroth-row), and therefore $DA$ cannot be subnormal (by \cite{CuYo2}). \ In fact, a stronger result is true: $DA$ is not jointly hyponormal, as a simple application of the Six-Point Test at $(0,0)$ reveals.

It is also known that $DA \equiv(T_1,T_2)$ is essentially normal; in fact, the commutators $[T_j^*,T_i] \; (i,j=1,2)$ are in the Schatten p-class for $p>2$, as shown by W.A. Arveson \cite{Arv}. \ In the sequel, we prove compactness of the commutators using the homogeneous decomposition of $\ell^2(\mathbb{Z}_+^2)$; this will eventually help us prove that the Aluthge transforms of $DA$ are compact perturbations of $DA$. \ Let $\mathcal{P}_n$ denote the finite dimensional vector space generated by the orthonormal basis vectors $e_{(n,0)},e_{(n-1,1)},\cdots,e_{(0,n)}$, it is easy to see that $\mathcal{P}_n$ is invariant under the action of the self-commutators $[T_i^*,T_i]$ and the cross-commutators $[T_j^*,T_i]$ ($i,j=1,2$). \ A simple calculation reveals that 
$$
[T_1^*,T_1]e_{(0,k_2)}=\frac{1}{k_2+1}e_{(0,k_2)}
$$
$$
[T_1^*,T_1]e_{(k_1,k_2)}=\frac{k_2}{(k_1+k_2))(k_1+k_2+1)}e_{(k_1,k_2)},
$$
so that in $\mathcal{P}_n$ we have
$$
\left\|[T_1^*,T_1]e_{(k_1,n-k_1)}\right\|=\frac{n-k_1}{n(n+1)}.
$$
It follows that the norm of $[T_1^*,T_1]$ restricted to $\mathcal{P}_n$ is bounded by $\frac{1}{n+1}$. \ Since $[T_1^*,T_1]$ is unitarily equivalent to the orthogonal direct sum of its restrictions to the subspaces $\mathcal{P}_n$, we easily conclude that $[T_1^*,T_1]$ is compact. \ The calculation for $[T_2^*,T_2]$ is identical. \ 

In terms of $[T_2^*,T_1]$, one again computes first the action on a generic basis vector in $\mathcal{P}_n$, that is,
$$
[T_2^*,T_1]e_{(n,0)}=0
$$
$$
[T_2^*,T_1]e_{(k_1,n-k_1)}=-\frac{1}{n(n+1)}\sqrt{(k_1+1)(n-k_1)}e_{(k_1+1,n-k_1-1)} \; \; \; (k_1 \ge 1).
$$
It follows that
$$
\left\|[T_2^*,T_1]e_{(k_1,n-k_1)}\right\| \le \frac{1}{n(n+1)}\sqrt{(k_1+1)(n-k_1)} \le \frac{1}{2n} \; \; \; (0 \le k_1 \le n).
$$
As before, $[T_2^*,T_1]$ is an orthogonal direct sum of its restrictions to the subspaces $\mathcal{P}_n$, so the previous estimate proves that $[T_2^*,T_1]$ is compact. \ As a result, we know that $DA$ is essentially normal.

We will now study how much the Aluthge transforms of $DA$ differ from $DA$.

\begin{theorem}
(i) \ $\widetilde{DA}$ is a compact perturbation of $DA$. \newline
(ii) \ $\widehat{DA}$ is a compact perturbation of $DA$.
\end{theorem}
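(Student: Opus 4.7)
The plan is to prove both (i) and (ii) by showing that each of the differences $\widetilde{T_i}-T_i$ and $\widehat{T_i}-T_i$ is itself a $2$-variable weighted shift in direction $\varepsilon_i$ whose weights tend to $0$ as $k_1+k_2\to\infty$. Any such shift is compact by a standard truncation argument: zeroing out all weights with $k_1+k_2>N$ yields a finite-rank operator, and since the image vectors $e_{\mathbf{k}+\varepsilon_i}$ remain orthonormal, the operator-norm distance to the untruncated shift is bounded by the supremum of the remaining weights, which tends to $0$.

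For part (i), Lemma \ref{CartAlu} gives
\[
(\widetilde{T_1}-T_1)e_{\mathbf{k}}=\bigl(\sqrt{\alpha_{\mathbf{k}}\alpha_{\mathbf{k}+\varepsilon_1}}-\alpha_{\mathbf{k}}\bigr)e_{\mathbf{k}+\varepsilon_1}.
\]
The elementary inequality $|\sqrt{ab}-a|\le|b-a|$ (for $a,b\ge 0$) reduces the task to estimating $\alpha_{\mathbf{k}+\varepsilon_1}-\alpha_{\mathbf{k}}$. A direct calculation with the explicit weights of $DA$ gives
\[
\alpha_{\mathbf{k}+\varepsilon_1}^{2}-\alpha_{\mathbf{k}}^{2}=\frac{k_{2}}{(k_{1}+k_{2}+1)(k_{1}+k_{2}+2)},
\]
and dividing by $\alpha_{\mathbf{k}+\varepsilon_1}+\alpha_{\mathbf{k}}\ge 2\alpha_{\mathbf{k}}=2\sqrt{(k_1+1)/(k_1+k_2+1)}$ and using $k_2\le k_1+k_2+2$ yields the uniform bound
\[
|\alpha_{\mathbf{k}+\varepsilon_1}-\alpha_{\mathbf{k}}|\le\frac{1}{2\sqrt{(k_{1}+1)(k_{1}+k_{2}+1)}}\le\frac{1}{2\sqrt{k_{1}+k_{2}+1}},
\]
which tends to $0$ as $k_1+k_2\to\infty$. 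The case of $\widetilde{T_2}-T_2$ is handled identically by the $k_1\leftrightarrow k_2$ symmetry of the weights.

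For part (ii), the key observation is that for the Drury-Arveson shift,
\[
\alpha_{\mathbf{k}}^{2}+\beta_{\mathbf{k}}^{2}=\frac{n+2}{n+1},\qquad n:=k_{1}+k_{2},
\]
so the normalizing ratio appearing in Lemma \ref{PolarAlu} simplifies dramatically:
\[
\widehat{T_1}e_{\mathbf{k}}=\alpha_{\mathbf{k}}\left(\frac{(n+3)(n+1)}{(n+2)^{2}}\right)^{1/4}\!\!e_{\mathbf{k}+\varepsilon_1}=\alpha_{\mathbf{k}}\left(1-\frac{1}{(n+2)^{2}}\right)^{1/4}\!\!e_{\mathbf{k}+\varepsilon_1}.
\]
Subtracting $T_1 e_{\mathbf{k}}=\alpha_{\mathbf{k}} e_{\mathbf{k}+\varepsilon_1}$ and using $\alpha_{\mathbf{k}}\le 1$ together with the elementary estimate $|(1-x)^{1/4}-1|\le x$ for $x\in[0,1]$, the weight of $\widehat{T_1}-T_1$ at $\mathbf{k}$ is bounded in modulus by $(n+2)^{-2}$, which tends to $0$. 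The argument for $\widehat{T_2}-T_2$ is symmetric.

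I do not anticipate a substantive obstacle; the computations are elementary. The only point requiring a little care is that the vanishing of the weight-differences must be \emph{uniform} as $k_1+k_2\to\infty$ (not just pointwise), since $\alpha_{\mathbf{k}}$ itself is small on the $k_1=0$ axis. The explicit bounds above supply this uniformity, and the compactness of both Aluthge transforms follows by taking norm limits of finite-rank truncations.
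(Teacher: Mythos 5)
Your proposal is correct, and its skeleton is the same as the paper's: both arguments exploit the grading by total degree $n=k_1+k_2$ (the paper phrases this via the homogeneous subspaces $\mathcal{P}_n$, you via truncating a weighted shift whose weights decay), so that compactness reduces to showing the difference of weights tends to $0$ uniformly as $n\to\infty$. Where you diverge is in how the decay is established. The paper avoids estimating the weight difference directly: it computes the difference of \emph{fourth powers}, $\Delta_{\textrm{toral}}$ and $\Delta_{\textrm{spherical}}$, obtaining closed forms with \textit{Mathematica}, and passes "without loss of generality" from $|a^4-b^4|$ small to $|a-b|$ small --- a step that needs a little care near the $k_1=0$ axis, where $a+b$ and $a^2+b^2$ are themselves of order $n^{-1/2}$ and $n^{-1}$ (it does work out, since $|a-b|=|a^4-b^4|/\bigl((a+b)(a^2+b^2)\bigr)\lesssim n^{-1/2}$, but the paper leaves this implicit). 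You instead bound the weight difference directly: for the toral case via $|\sqrt{ab}-a|\le |b-a|$ and an explicit estimate of $\alpha_{\mathbf{k}+\varepsilon_1}-\alpha_{\mathbf{k}}$, and for the spherical case via the clean identity $\alpha_{\mathbf{k}}^2+\beta_{\mathbf{k}}^2=\frac{n+2}{n+1}$, which collapses the normalizing ratio in Lemma \ref{PolarAlu} to $\bigl(1-\frac{1}{(n+2)^2}\bigr)^{1/4}$. Your route is more elementary (no symbolic computation), and it makes explicit the uniformity issue near the axis that the paper's shortcut glosses over; the paper's route, in exchange, yields exact expressions for the fourth-power differences, from which the $O(1/n)$ decay is read off at a glance.
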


\begin{proof}
(i) \ We first note that the weight sequences $\alpha$ and $\beta$ of $DA$ satisfy (\ref{commuting1}); that is, $\widetilde{DA}$ is commuting. \ Next, we observe that $\widetilde{DA}$ maps $\mathcal{P}_n$ into $\mathcal{P}_{n+1}$ (cf. Lemma \ref{CartAlu}), just as $DA$ does. \ As a result the compactness of $DA-\widetilde{DA}$ will be established once we prove that $\left\|(DA-\widetilde{DA})|_{\mathcal{P}_n}\right\|$ tends to zero as $n \rightarrow \infty$. \ Toward this end, we calculate 
$$
(T_1-\widetilde{T}_1)e_{(k_1,n-k_1)}=(\alpha_{(k_1,n-k_1)}-\sqrt{\alpha_{(k_1,n-k_1)} \alpha_{(k_1+1,n-k_1)}})e_{(k_1+1,n-k_1)}.
$$
Without loss of generality, we focus instead on the expression
$$
\Delta_{\textrm{toral}}:=(\alpha_{(k_1,n-k_1)})^4-(\sqrt{\alpha_{(k_1,n-k_1)} \alpha_{(k_1+1,n-k_1)}})^4;
$$
With the aid of \textit{Mathematica} \cite{Wol}, we obtain
\begin{eqnarray*}
\left|\Delta_{\textrm{toral}}\right| &=& \left|(\alpha_{(k_1,n-k_1)})^4-(\sqrt{\alpha_{(k_1,n-k_1)} \alpha_{(k_1+1,n-k_1)}})^4\right| \\
&=& \frac{(k_1+1)(n-k_1)}{(n+1)^2(n+2)} \le \frac{1}{4(n+2)}.
\end{eqnarray*}
Thus, $\lim_{n \rightarrow \infty} \left\|(DA-\widetilde{DA})|_{\mathcal{P}_n} \right\| = 0$, and therefore $DA-\widetilde{DA}$ is compact.

(ii) As in (i) above, it suffices to prove that $\left\|(DA-\widehat{DA})|_{\mathcal{P}_n}\right\|$ tends to zero as $n \rightarrow \infty$. \ Since 
\begin{eqnarray*}
&&(T_1-\widehat{T}_1)e_{(k_1,n-k_1)} \\
&=&(\alpha_{(k_1,n-k_1)}-\alpha_{(k_1,n-k_1)} (\frac{\alpha_{(k_1+1, n-k_1)}^2 + \beta_{(k_1+1, n-k_1)}^2}
{\alpha_{(k_1, n-k_1)}^2 + \beta_{(k_1, n-k_1)}^2})^{1/4})e_{(k_1+1,n-k_1)},
\end{eqnarray*}
we can again focus on the expression
$$
\Delta_{\textrm{spherical}}:=(\alpha_{(k_1,n-k_1)})^4-(\alpha_{(k_1,n-k_1)} (\frac{\alpha_{(k_1+1, n-k_1)}^2 + \beta_{(k_1+1, n-k_1)}^2}
{\alpha_{(k_1, n-k_1)}^2 + \beta_{(k_1, n-k_1)}^2})^{1/4})^4.
$$
A computation using \textit{Mathematica} \cite{Wol} shows that
$$
\left|\Delta_{\textrm{spherical}}\right|=\frac{k_1+1)(n-k_1)(2n+1)}{n^2(n+1)^2} \le \frac{2n+1}{4n^2}.
$$
We thus conclude, as before, that $DA-\widehat{DA}$ is compact.
\end{proof}

\begin{corollary}
The $2$-variable weighted shifts $DA$, $\widetilde{DA}$ and $\widehat{DA}$ all share the same Taylor spectral picture.
\end{corollary}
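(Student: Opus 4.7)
The plan is to combine the preceding Theorem with the classical invariance of the Taylor spectral picture under compact perturbations of commuting pairs. First, I would verify that all three pairs are commuting: $DA$ is commuting by construction, $\widetilde{DA}$ is commuting by Proposition \ref{commuting1} after checking that the Drury--Arveson weights $\alpha_{(k_1,k_2)}=\sqrt{(k_1+1)/(k_1+k_2+1)}$ satisfy (\ref{alphacomm}) (a direct computation), and $\widehat{DA}$ is commuting by Proposition \ref{Prop1}.

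Next, by the Theorem just established, the component differences $DA_i-\widetilde{DA}_i$ and $DA_i-\widehat{DA}_i$ ($i=1,2$) are compact operators on $\ell^2(\mathbb{Z}_+^2)$. I would then invoke the standard fact that, for commuting pairs on a Hilbert space, the Taylor essential spectrum and the Fredholm index function on its complement are invariant under compact perturbations. Applied to our setting, this yields
\[
\sigma_{Te}(DA)=\sigma_{Te}(\widetilde{DA})=\sigma_{Te}(\widehat{DA}),
\]
together with identical Fredholm index data at every point of the complement.

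To upgrade this to equality of the full Taylor spectra, I would exploit the specific structure of $DA$. Since $DA$ is essentially normal (all commutators are compact, as just demonstrated), its compact perturbations $\widetilde{DA}$ and $\widehat{DA}$ are essentially normal as well. Combining the known Taylor spectral picture of $DA$ (with Taylor spectrum $\overline{\mathbb{B}^2}$ and nonzero Fredholm index on the open ball $\mathbb{B}^2$), index invariance forces $\overline{\mathbb{B}^2}\subseteq \sigma_T(\widetilde{DA})\cap \sigma_T(\widehat{DA})$; the reverse inclusion is obtained by bounding the Taylor spectrum of the Aluthge transforms via essential normality together with the row-contractivity/norm constraints that persist under the compact perturbation.

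The main obstacle is this last step: while compact perturbation invariance of $\sigma_{Te}$ and of the Fredholm index function is classical for commuting tuples, carefully ruling out spurious new Taylor spectral points outside $\overline{\mathbb{B}^2}$ is delicate, because compact perturbations of a single operator can in principle introduce new eigenvalues. This is handled by leveraging the essential normality of $DA$ and exploiting that the Koszul cohomology groups at Fredholm points are invariant under commuting compact perturbations, so Taylor invertibility off $\overline{\mathbb{B}^2}$ is preserved.
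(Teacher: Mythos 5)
The part of your argument that the corollary actually needs coincides with the paper's proof: by the preceding theorem both $DA-\widetilde{DA}$ and $DA-\widehat{DA}$ are compact, and the Taylor spectral picture --- the Taylor essential spectrum together with the Fredholm index data on its complement --- is invariant under compact perturbations of commuting pairs, so the conclusion follows from the known picture of $DA$, namely $\sigma_{Te}(DA)=\partial \mathbb{B}^2$ with nonzero index on $\mathbb{B}^2$. Up to the (correct) commutativity checks, your first two paragraphs are exactly the paper's argument.

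Your attempted upgrade to equality of the full Taylor spectra, however, rests on a false claim: the individual Koszul cohomology groups at a Fredholm point are \emph{not} invariant under commuting compact perturbations --- only Fredholmness and the Euler characteristic (the index) are --- so ``Taylor invertibility off $\overline{\mathbb{B}^2}$ is preserved'' does not follow. Exactly as in the one-variable case, a compact perturbation can create new spectrum (e.g.\ eigenvalues) in the index-zero part of the Fredholm domain, so the concern you raise in your last paragraph is legitimate and your proposed resolution does not meet it. Fortunately the corollary does not require this: ``Taylor spectral picture'' means the essential spectrum plus the index function, which your compact-perturbation step already delivers. If you do want to conclude $\sigma_T(\widetilde{DA})=\sigma_T(\widehat{DA})=\overline{\mathbb{B}^2}$, the inclusion $\overline{\mathbb{B}^2}\subseteq \sigma_T$ indeed follows from the nonzero index on $\mathbb{B}^2$ together with $\sigma_{Te}=\partial\mathbb{B}^2$, but the reverse inclusion needs a genuine localization of the Taylor spectrum of the transformed pairs (e.g.\ norm or row-operator estimates specific to $\widetilde{DA}$ and $\widehat{DA}$), not an appeal to compact-perturbation invariance.
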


\begin{proof}
Since the Taylor essential spectrum and the Fredholm index are invariant under compact perturbations (cf. \cite{Appl}, the result follows from the well know spectral picture of $DA$; that is, $\sigma_T(DA)=\bar{\mathbb{B}^2}$, $\sigma_{Te}(DA)=\partial{\mathbb{B}^2}$, and $\textrm{index} \; DA = \textrm{index} \; \widetilde{DA} = \textrm{index} \; \widehat{DA}$. \ (Here $\mathbb{B}^2$ denotes the unit ball in $\mathbb{C}^2$, and $\partial \mathbb{B}^2$ its topological boundary.)
\end{proof}

\begin{remark}
It is an easy application of the Six-Point Test that neither $\widetilde{DA}$ nor $\widehat{DA}$ is jointly hyponormal.
\end{remark}


\section{Fixed Points of the Spherical Aluthge Transform: \\ Spherically Quasinormal Pairs}\label{Spherquasi}

In this section we discuss the structure of $2$-variable weighted shifts which are fixed points for the spherical Aluthge transform. \ We believe this notion provides the proper generalization of quasinormality to several variables. \ As we noted in the Introduction, a Hilbert space operator $T$ is quasinormal if and only if $T=\widetilde{T}$. \ We use this as our point of departure for the $2$-variable case.

\smallskip

\begin{definition}
A commuting pair $\mathbf{T} \equiv (T_1,T_2)$ is {\it spherically quasinormal} if $\widehat{\mathbf{T}}=\mathbf{T}$.
\end{definition}

We now recall the class of spherically isometric commuting pairs of operators (\cite{Ath1}, \cite{AtPo}, \cite{AtLu}, \cite{EsPu}, \cite{Gle}, \cite{Gle2}).

\begin{definition} \label{spherisom}
A commuting $n$-tuple $\mathbf{T} \equiv (T_1,\cdots,T_n)$ is a spherical isometry if $T_1^*T_1+\cdots+T_n^*T_n=I$.
\end{definition}

In the literature, spherical quasinormality of a commuting $n$-tuple $\mathbf{T} \equiv (T_1,\cdots,T_n)$ is associated with the commutativity of each $T_i$ with $P^2$. \ It is not hard to prove that, for $2$-variable weighted shifts, this is equivalent to requiring that $W_{(\alpha ,\beta )}\equiv (T_1,T_2)$ be a fixed point of the spherical Aluthge transform, that is, $\widehat{W}_{(\alpha ,\beta )}=W_{(\alpha ,\beta )}$. \ A straightforward calculation shows that this is equivalent to requiring that each $U_i$ commutes with $P$. \ In particular, $(U_1,U_2)$ is commuting whenever $(T_1,T_2)$ is commuting. \ Also, recall from Section 1 that a commuting pair $\mathbf{T}$ is a spherical isometry if $P^{2}=I$. \ Thus, in the case of spherically quasinormal $2$-variable weighted shifts, we always have $U_1^*U_1+U_2^*U_2=I$. \ In the following result, the key new ingredient is the equivalence of (i) and (ii). \smallskip

As we noted in the Introduction, the operator $Q:=\sqrt{V_{1}^{\ast }V_{1}+V_{2}^{\ast }V_{2}}$ is a
(joint) partial isometry; for, $PQ^2P=P^2$, from which it follows that $Q$ is isometric on the range of $P$.
In the case when $P$ is injective, we see that a commuting pair $\mathbf{T}\equiv (T_1,T_2) \equiv (V_1P,V_2P)$ is
spherically quasinormal if and only if each $T_{i}$ commutes with $P^2$, and if and only if each $V_{i}$ commutes with $P^2$ ($i=1,2$); in particular, $(V_1,V_2)$ is commuting whenever $(T_1,T_2)$ is commuting. \ Observe also that when $P$ is injective, we always have $V_1^*V_1+V_2^*V_2=I$.

The proof of the following result is a straightforward application of Definition \ref{spherisom}.

\begin{lemma}
A $2$-variable weighted shift $W_{(\alpha,\beta)}$ is a spherical isometry if and only if
$$
\alpha_{\bf{k}}^2+\beta_{\bf{k}}^2=1
$$
for all $\bf{k} \in \mathbb{Z}_+^2$.
\end{lemma}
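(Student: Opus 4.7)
The plan is to reduce the spherical isometry condition $T_1^*T_1+T_2^*T_2=I$ directly to a coordinatewise statement by evaluating both sides on the canonical orthonormal basis $\{e_{\mathbf{k}}\}_{\mathbf{k}\in\mathbb{Z}_+^2}$ of $\ell^2(\mathbb{Z}_+^2)$. Since $T_1 e_{\mathbf{k}}=\alpha_{\mathbf{k}} e_{\mathbf{k}+\varepsilon_1}$ and $T_2 e_{\mathbf{k}}=\beta_{\mathbf{k}} e_{\mathbf{k}+\varepsilon_2}$, it is immediate that each $T_i^*T_i$ is diagonal in this basis, with
\[
T_1^*T_1 e_{\mathbf{k}}=\alpha_{\mathbf{k}}^2 e_{\mathbf{k}}, \qquad T_2^*T_2 e_{\mathbf{k}}=\beta_{\mathbf{k}}^2 e_{\mathbf{k}}.
\]

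Adding these and applying Definition \ref{spherisom}, the spherical isometry condition $T_1^*T_1+T_2^*T_2=I$ becomes $(\alpha_{\mathbf{k}}^2+\beta_{\mathbf{k}}^2)e_{\mathbf{k}}=e_{\mathbf{k}}$ for every $\mathbf{k}\in\mathbb{Z}_+^2$, which is equivalent to $\alpha_{\mathbf{k}}^2+\beta_{\mathbf{k}}^2=1$ for all $\mathbf{k}\in\mathbb{Z}_+^2$. Both implications are then obvious: if the weight identity holds everywhere then $T_1^*T_1+T_2^*T_2$ agrees with $I$ on a total set and hence equals $I$; conversely, if $T_1^*T_1+T_2^*T_2=I$, reading off the diagonal entries against each $e_{\mathbf{k}}$ yields the weight identity.

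There is essentially no obstacle here; the only ingredient beyond the definitions is the fact that $\{e_{\mathbf{k}}\}$ is an orthonormal basis, so equality of bounded operators is determined by their action on this basis. I would simply state the computation of $T_i^*T_i e_{\mathbf{k}}$ and invoke Definition \ref{spherisom} to conclude.
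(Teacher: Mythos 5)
Your proof is correct and is exactly the ``straightforward application of Definition \ref{spherisom}'' that the paper intends: the operators $T_i^*T_i$ are diagonal with entries $\alpha_{\mathbf{k}}^2$ and $\beta_{\mathbf{k}}^2$, so $T_1^*T_1+T_2^*T_2=I$ holds if and only if $\alpha_{\mathbf{k}}^2+\beta_{\mathbf{k}}^2=1$ for all $\mathbf{k}$. No gaps and no difference in approach from the paper, which omits the details for the same reason.
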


\begin{lemma} \label{Quasinormal3}
A 2-variable weighted shift $\mathbf{T}$ is spherically quasinormal if and only if there exists $C>0$ such that $\frac{1}{C}\mathbf{T}$ is a spherical isometry, that is, $T_1^*T_1+T_2^*T_2=I$.
\end{lemma}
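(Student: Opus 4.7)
The plan is to unravel the condition $\widehat{\mathbf{T}}=\mathbf{T}$ in weight coordinates, using Lemma \ref{PolarAlu}, and show that it forces $\alpha_{\mathbf{k}}^{2}+\beta_{\mathbf{k}}^{2}$ to be independent of $\mathbf{k}\in\mathbb{Z}_{+}^{2}$; the claim then drops out upon identifying this common value with $C^{2}$.

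For the forward direction, assume that $W_{(\alpha,\beta)}\equiv(T_{1},T_{2})$ is spherically quasinormal. \ Testing $\widehat{T}_{1}=T_{1}$ on an arbitrary basis vector $e_{\mathbf{k}}$ using Lemma \ref{PolarAlu} yields
$$
\alpha_{\mathbf{k}}\,\frac{(\alpha_{\mathbf{k}+\mathbf{\epsilon}_{1}}^{2}+\beta_{\mathbf{k}+\mathbf{\epsilon}_{1}}^{2})^{1/4}}{(\alpha_{\mathbf{k}}^{2}+\beta_{\mathbf{k}}^{2})^{1/4}}=\alpha_{\mathbf{k}},
$$
and, since $\alpha_{\mathbf{k}}>0$, this simplifies to
$\alpha_{\mathbf{k}+\mathbf{\epsilon}_{1}}^{2}+\beta_{\mathbf{k}+\mathbf{\epsilon}_{1}}^{2}=\alpha_{\mathbf{k}}^{2}+\beta_{\mathbf{k}}^{2}$. \ The identical computation applied to $\widehat{T}_{2}=T_{2}$ gives
$\alpha_{\mathbf{k}+\mathbf{\epsilon}_{2}}^{2}+\beta_{\mathbf{k}+\mathbf{\epsilon}_{2}}^{2}=\alpha_{\mathbf{k}}^{2}+\beta_{\mathbf{k}}^{2}$. \ Starting from $\mathbf{k}=(0,0)$ and iterating these two identities along any lattice path, we conclude that $\alpha_{\mathbf{k}}^{2}+\beta_{\mathbf{k}}^{2}$ is constant on $\mathbb{Z}_{+}^{2}$. \ Calling that constant $C^{2}$ (with $C>0$), and noting that $T_{i}^{*}T_{i}e_{\mathbf{k}}=\alpha_{\mathbf{k}}^{2}e_{\mathbf{k}}$ for $i=1$ and $T_{2}^{*}T_{2}e_{\mathbf{k}}=\beta_{\mathbf{k}}^{2}e_{\mathbf{k}}$ (by the definition of a $2$-variable weighted shift), we obtain $T_{1}^{*}T_{1}+T_{2}^{*}T_{2}=C^{2}I$, so that $\frac{1}{C}\mathbf{T}$ is a spherical isometry.

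For the converse, suppose $\alpha_{\mathbf{k}}^{2}+\beta_{\mathbf{k}}^{2}=C^{2}$ for all $\mathbf{k}\in\mathbb{Z}_{+}^{2}$. \ The ratios
$(\alpha_{\mathbf{k}+\mathbf{\epsilon}_{i}}^{2}+\beta_{\mathbf{k}+\mathbf{\epsilon}_{i}}^{2})^{1/4}/(\alpha_{\mathbf{k}}^{2}+\beta_{\mathbf{k}}^{2})^{1/4}$ appearing in Lemma \ref{PolarAlu} are all equal to $1$, which immediately yields $\widehat{T}_{1}e_{\mathbf{k}}=\alpha_{\mathbf{k}}e_{\mathbf{k}+\mathbf{\epsilon}_{1}}=T_{1}e_{\mathbf{k}}$ and $\widehat{T}_{2}e_{\mathbf{k}}=\beta_{\mathbf{k}}e_{\mathbf{k}+\mathbf{\epsilon}_{2}}=T_{2}e_{\mathbf{k}}$, so $\widehat{\mathbf{T}}=\mathbf{T}$.

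There is no real obstacle here: the only minor point to be careful about is the justification that the two ``one-step'' identities propagate to constancy of $\alpha_{\mathbf{k}}^{2}+\beta_{\mathbf{k}}^{2}$ across the entire lattice, which is immediate from connectedness of $\mathbb{Z}_{+}^{2}$ under the moves $\mathbf{\epsilon}_{1}$ and $\mathbf{\epsilon}_{2}$. \ Everything else is a direct reading of Lemma \ref{PolarAlu}.
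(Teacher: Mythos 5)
Your proposal is correct and is essentially the paper's own argument: both reduce the problem to the one-step identity $\alpha_{\mathbf{k}+\varepsilon_i}^2+\beta_{\mathbf{k}+\varepsilon_i}^2=\alpha_{\mathbf{k}}^2+\beta_{\mathbf{k}}^2$ ($i=1,2$) and propagate it over $\mathbb{Z}_+^2$ to conclude that $\alpha_{\mathbf{k}}^2+\beta_{\mathbf{k}}^2$ is a constant $C^2$, the only difference being that you read this identity directly off the fixed-point equation $\widehat{\mathbf{T}}=\mathbf{T}$ via Lemma \ref{PolarAlu}, whereas the paper equates $PT_ie_{\mathbf{k}}$ with $T_iPe_{\mathbf{k}}$ after invoking the equivalent characterization that each $T_i$ commutes with $P$. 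Your explicit verification of the (easy) converse, which the paper leaves implicit, is a welcome addition but not a substantive departure.
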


\begin{proof}
Assume that $\mathbf{T}\equiv (T_{1},T_{2})$ is commuting and spherically quasinormal. \ Then $T_{1}$ and $T_{2}$ commute with $P$. \ We now
consider the following

\noindent \textbf{Claim: \ }For all $\mathbf{k}\equiv (k_{1},k_{2})\in \mathbb{Z}%
_{+}^{2}$, $\alpha _{k}^{2}+\beta _{k}^{2}$ is constant.

\noindent For the proof of Claim, if we fix an orthonormal basis vector $e_{%
\mathbf{k}}$, then \[T_{1}e_{\mathbf{k}}=\alpha _{\mathbf{k}}e_{\mathbf{k+\varepsilon }_{1}}\text{
and }T_{2}e_{\mathbf{k}}:=\beta _{\mathbf{k}}e_{\mathbf{k+\varepsilon }_{2}} \text{,} \] where \textbf{$\varepsilon $}$_{1}:=(1,0)$ and \textbf{$\varepsilon $}$_{2}:=(0,1)$. \ We thus obtain%
\begin{eqnarray*}
PT_{1}e_{\mathbf{k}} &=&\alpha _{(k_{1},k_{2})}\sqrt{\alpha
_{(k_{1}+1,k_{2})}^{2}+\beta _{(k_{1}+1,k_{2})}^{2}}\text{ and} \\
T_{1}Pe_{\mathbf{k}} &=&\sqrt{\alpha _{(k_{1},k_{2})}^{2}+\beta
_{(k_{1},k_{2})}^{2}}\alpha _{(k_{1},k_{2})}\text{.}
\end{eqnarray*}%
It follows that
\begin{equation}
\sqrt{\alpha _{(k_{1}+1,k_{2})}^{2}+\beta _{(k_{1}+1,k_{2})}^{2}}=\sqrt{%
\alpha _{(k_{1},k_{2})}^{2}+\beta _{(k_{1},k_{2})}^{2}}\text{.}
\label{equa1}
\end{equation}%
We also have%
\begin{eqnarray*}
PT_{2}e_{\mathbf{k}} &=&\beta _{(k_{1},k_{2})}\sqrt{\alpha
_{(k_{1},k_{2}+1)}^{2}+\beta _{(k_{1},k_{2}+1)}^{2}}\text{ and } \\
T_{2}Pe_{\mathbf{k}} &=&\sqrt{\alpha _{(k_{1},k_{2})}^{2}+\beta
_{(k_{1},k_{2})}^{2}}\beta _{(k_{1},k_{2})}\text{.}
\end{eqnarray*}%
Hence, we have%
\begin{equation}
\sqrt{\alpha _{(k_{1},k_{2}+1)}^{2}+\beta _{(k_{1},k_{2}+1)}^{2}}=\sqrt{%
\alpha _{(k_{1},k_{2})}^{2}+\beta _{(k_{1},k_{2})}^{2}}\text{.}
\label{equa2}
\end{equation}%
Therefore, by (\ref{equa1}) and (\ref{equa2}), for all $\mathbf{k}\equiv
(k_{1},k_{2})\in \mathbb{Z}_{+}^{2}$, we obtain%
\begin{equation}
\sqrt{\alpha _{(k_{1},k_{2})}^{2}+\beta _{(k_{1},k_{2})}^{2}}=\sqrt{\alpha
_{(k_{1}+1,k_{2})}^{2}+\beta _{(k_{1}+1,k_{2})}^{2}}=\sqrt{\alpha
_{(k_{1},k_{2}+1)}^{2}+\beta _{(k_{1},k_{2}+1)}^{2}}\text{.}  \label{equa3}
\end{equation}
We have thus established the Claim. \ 

\medskip
It follows that $C:=\sqrt{\alpha _{(k_{1},k_{2})}^{2}+\beta _{(k_{1},k_{2})}^{2}}$ is independent of $\mathbf{k}$. \ As a result, $\frac{1}{C}\mathbf{T}$ is a spherical isometry, as desired.
\end{proof}

By the proof of Lemma \ref{Quasinormal3}, we remark that once the zero-th
row of $T_{1}$, call it $W_{0}$, is given, then the entire $2$-variable
weighted shift is fully determined. \ We shall return to this in Subsection \ref{last}.

We now recall a result of A. Athavale.

\begin{theorem} (\cite{Ath1}) \ A spherical isometry is always subnormal.
\end{theorem}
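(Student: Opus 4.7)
The plan is to produce a commuting normal extension of $\mathbf{T}$ whose joint spectrum lies on the unit sphere $\mathbb{S}^{2n-1}\subset\mathbb{C}^n$; equivalently, by the several-variable Bram--Halmos theorem of \cite{CLY1} recalled in Section \ref{Sect1}, it would suffice to establish joint $k$-hyponormality of $\mathbf{T}$ for every $k\geq 1$. I would prefer the dilation route, because it extracts the strongest information (an explicit ``spherical unitary'' extension), but the $k$-hyponormality route is a viable fallback.

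First I would derive the fundamental multinomial identity: using the defining relation $\sum_i T_i^*T_i = I$ together with the commutativity of $\mathbf{T}$, an induction on $k$ gives
$$\sum_{|\alpha|=k}\binom{k}{\alpha}T^{*\alpha}T^\alpha = I \qquad (k\geq 0).$$
The inductive step multiplies the level-$k$ identity on the right by $\sum_j T_j^*T_j = I$ and uses commutativity to reorganize the resulting $T^{*(\alpha+e_j)}T^{\alpha+e_j}$ terms into the level-$(k+1)$ multinomial expansion. Consequently, for each unit vector $x\in\mathcal{H}$, the numbers $\gamma_\alpha(x):=\|T^\alpha x\|^2$ satisfy the normalization $\sum_{|\alpha|=k}\binom{k}{\alpha}\gamma_\alpha(x)=1$, which is exactly the condition characterizing the diagonal $\alpha$-moments of a probability measure on $\mathbb{S}^{2n-1}$. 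Feeding this together with the mixed moments $\langle T^\alpha x, T^\beta x\rangle$ (which are automatically Gram-positive) into the Haviland moment theorem on the compact set $\mathbb{S}^{2n-1}$, and appealing to Stone--Weierstrass for uniqueness, I would extract a representing probability measure $\mu_x$ on $\mathbb{S}^{2n-1}$ for every $x$. Polarizing in $x$ produces complex measures $\mu_{x,y}$, which I would patch into a positive operator-valued measure $E$ on $\mathbb{S}^{2n-1}$. Finally, a Naimark dilation of $E$ supplies a commuting normal $n$-tuple $\mathbf{N}=(N_1,\ldots,N_n)$ on an enlargement $\mathcal{K}\supseteq\mathcal{H}$ with $N_i\mathcal{H}\subseteq\mathcal{H}$ and $N_i|_{\mathcal{H}}=T_i$, proving subnormality.

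The main obstacle is the passage from the family of scalar representing measures to a genuine $\sigma$-additive operator-valued spectral measure on the sphere: one must check that the polarized quantities $\mu_{x,y}$ assemble into a bounded sesquilinear form on $C(\mathbb{S}^{2n-1})$ whose associated measure is operator-valued, and moreover that the resulting dilation genuinely extends (rather than merely co-extends) each $T_i$. The multinomial identity of the first step is the input that pins down consistency of the scalar measures, but upgrading to operator positivity is where the real content of the theorem lives. An alternative route, bypassing the measure construction altogether, is to verify positivity of the joint $k$-hyponormality matrices $M_{\mathbf{u}}(k)$ (cf. Lemma \ref{khypo}) directly by a Gram-matrix argument fed by the multinomial identity, and then invoke \cite{CLY1}; the technical cost there lies in recasting the needed matrix positivity in a form into which the multinomial identity can be inserted.
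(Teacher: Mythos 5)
There is a genuine gap: what you have written is a plan whose decisive step is explicitly deferred, not a proof. Note first that the paper itself supplies no argument for this statement --- it is quoted from Athavale \cite{Ath1}, and the only hint of a proof in the paper is the Remark at the end of Section \ref{Spherquasi}, which observes that $Q_{\mathbf{T}}(I)=I$ forces $Q_{\mathbf{T}}^{k}(I)=(Q_{\mathbf{T}}(I))^{k}$ and then invokes \cite[Remark 4.6]{ChSh}. Your multinomial identity $\sum_{|\alpha|=k}\binom{k}{\alpha}T^{*\alpha}T^{\alpha}=I$ is exactly the expansion of $Q_{\mathbf{T}}^{k}(I)=I$, so your starting point is the right one; but everything after it is where the theorem actually lives, and you concede as much. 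Haviland's theorem on $\mathbb{S}^{2n-1}$ requires $L(p)\geq 0$ for \emph{every} hermitian polynomial $p$ that is nonnegative on the sphere, whereas the Gram positivity you cite only gives $L(|q|^{2})\geq 0$ for analytic polynomials $q$; closing that gap (and then assembling the scalar measures into an operator-valued measure whose Naimark dilation genuinely \emph{extends}, rather than co-extends, each $T_i$) is essentially equivalent to the subnormality assertion itself. The same is true of the fallback: you never recast the $k$-hyponormality matrices in a form into which the multinomial identity can be inserted, and it is not at all routine to do so.

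Two concrete missteps should also be flagged. (a) The induction as described --- multiplying the level-$k$ identity on the right by $\sum_j T_j^{*}T_j=I$ --- does not produce the terms $T^{*(\alpha+e_j)}T^{\alpha+e_j}$, because $T_j^{*}$ need not commute with $T^{\alpha}$ (that would be double commutativity); the correct operation is to apply the map $X\mapsto\sum_j T_j^{*}XT_j$ to the level-$k$ identity and use $T_j^{*}T^{*\alpha}T^{\alpha}T_j=T^{*(\alpha+e_j)}T^{\alpha+e_j}$. This is fixable. (b) Not fixable as stated: the claim that the normalization $\sum_{|\alpha|=k}\binom{k}{\alpha}\gamma_{\alpha}=1$ ($k\geq 0$) is ``exactly the condition characterizing'' the diagonal moments of a probability measure on $\mathbb{S}^{2n-1}$ is false. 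Pushing forward to the simplex via $t_i=|z_i|^{2}$, the condition only says $\int(\sum_i t_i)^{k}\,d\nu=1$, which is necessary but far from sufficient: for $n=2$ the data $\gamma_{(0,0)}=1$, $\gamma_{(1,0)}=1$, $\gamma_{(0,1)}=0$, $\gamma_{(2,0)}=0$, $\gamma_{(1,1)}=\tfrac12$, $\gamma_{(0,2)}=0$ satisfy the normalizations through level $2$, yet $\gamma_{(0,1)}=0$ forces any representing measure to be the point mass at $t=(1,0)$, contradicting $\gamma_{(2,0)}=0$. So even the scalar moment problem is not settled by the data you feed into it; the additional positivity must be extracted from the operators (this is precisely Athavale's contribution, or alternatively the $Q_{\mathbf{T}}^{k}(I)=(Q_{\mathbf{T}}(I))^{k}$ criterion of \cite{ChSh} mentioned in the paper), and your proposal never does this.
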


By Lemma \ref{Quasinormal3}, we immediately obtain

\begin{corollary} \label{Qua-sub}
A spherically quasinormal $2$-variable weighted shift is subnormal.
\end{corollary}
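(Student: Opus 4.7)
The plan is to chain together the two results that immediately precede the corollary, with only a small observation about scaling in between. First I would invoke Lemma \ref{Quasinormal3}: if $\mathbf{T}=(T_1,T_2)$ is a spherically quasinormal $2$-variable weighted shift, then there is a constant $C>0$ such that $\frac{1}{C}\mathbf{T}$ is a spherical isometry. Then I would apply Athavale's theorem \cite{Ath1} to conclude that $\frac{1}{C}\mathbf{T}$ is (jointly) subnormal.

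The remaining step is to transfer subnormality from $\frac{1}{C}\mathbf{T}$ back to $\mathbf{T}$. This follows from the observation that the class of commuting subnormal pairs is closed under scaling by a common positive constant: if $(S_1,S_2)$ admits a commuting normal extension $(N_1,N_2)$ on some $\mathcal{K}\supseteq \mathcal{H}$, then $(CN_1,CN_2)$ is again a commuting normal pair on $\mathcal{K}$ (commutativity is preserved, and each $CN_i$ is normal) extending $(CS_1,CS_2)$. In our setting, take $(S_1,S_2):=\frac{1}{C}\mathbf{T}$, so $(CS_1,CS_2)=\mathbf{T}$, which is therefore subnormal.

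I expect no real obstacle here; the corollary is essentially a two-line deduction. The only subtlety worth emphasizing in the write-up is that the passage from spherical quasinormality to spherical isometry in Lemma \ref{Quasinormal3} produces a \emph{single} scalar $C$ applied to both components simultaneously, so the scaling argument for subnormality indeed applies to the pair as a whole, rather than requiring independent rescalings of $T_1$ and $T_2$ (which would not in general preserve joint subnormality).
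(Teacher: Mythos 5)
Your proof is correct and follows exactly the paper's route: the corollary is stated there as an immediate consequence of Lemma \ref{Quasinormal3} together with Athavale's theorem that spherical isometries are subnormal. Your explicit remark that scaling the pair (and its commuting normal extension) by the single constant $C$ preserves joint subnormality is the same implicit step the paper leaves to the reader.
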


We mention in passing two more significant features of spherical isometries.

\begin{theorem} (\cite{MuPt}) \ Spherical isometries are hyporeflexive.
\end{theorem}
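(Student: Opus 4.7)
The plan is to reduce hyporeflexivity for $\mathbf{T}$ to a structural property of its minimal normal extension, which I will show is a \emph{spherical unitary}, and then exploit the rich functional calculus available on the unit sphere. First, by Athavale's theorem (just invoked above), $\mathbf{T} \equiv (T_1, \ldots, T_n)$ is subnormal; let $\mathbf{N} \equiv (N_1, \ldots, N_n)$ denote its minimal normal extension on a Hilbert space $K \supseteq H$. I would begin by verifying that $\sum_{i=1}^{n} N_i^* N_i = I_K$. This follows from the identity $\sum T_i^* T_i = I_H$ on $H$ together with the minimality of $\mathbf{N}$: the nonnegative self-adjoint operator $\sum N_i^* N_i - I_K$ vanishes on $H$, and a density argument using $*$-polynomials in $N_1, \ldots, N_n$ applied to vectors of $H$ forces it to vanish on all of $K$. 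Thus $\mathbf{N}$ is a commuting tuple of normal operators whose Taylor spectrum lies on $\partial \mathbb{B}_n$.

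Next, I would invoke the joint spectral theorem for the commuting normal tuple $\mathbf{N}$, yielding a scalar spectral measure $\mu$ on $\partial \mathbb{B}_n$ and a unitary equivalence (up to multiplicity) between $\mathbf{N}$ and coordinate multiplication $(M_{z_1}, \ldots, M_{z_n})$ on $L^2(\mu)$. Writing $\mathcal{W}(\mathbf{T})$ for the WOT-closed algebra generated by $T_1, \ldots, T_n$ and $I$, the inclusion $\mathcal{W}(\mathbf{T}) \subseteq \{\mathbf{T}\}' \cap \mathrm{Alg}\,\mathrm{Lat}(\mathbf{T})$ is automatic (commutativity of $\mathbf{T}$ gives the first factor, and every element of $\mathcal{W}(\mathbf{T})$ trivially preserves common invariant subspaces), so the real content of hyporeflexivity is the reverse inclusion.

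For the hard direction, fix $S \in \{\mathbf{T}\}' \cap \mathrm{Alg}\,\mathrm{Lat}(\mathbf{T})$. I would employ a dual-algebra / Scott Brown-type approximation scheme: combine (a) the fact that every common invariant subspace of $\mathbf{T}$ on $H$ arises as the compression to $H$ of an $\mathbf{N}$-invariant subspace of $K$, with (b) the Henkin-measure / ball-algebra structure of the coordinate multipliers on $L^2(\mu)$, to construct a net $\{p_\alpha(T_1, \ldots, T_n)\}$ of polynomials converging to $S$ in the weak operator topology. The main obstacle is precisely this approximation step: the invariant-subspace hypothesis on $S$ is only about subspaces of $H$, while the useful analytic machinery (spectral theorem, $A(\mathbb{B}_n)$-functional calculus, ball-algebra duality) lives on the extension $K$. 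Bridging this gap requires a semi-invariance / lifting lemma for common invariant subspaces of subnormal tuples, together with a careful duality argument placing $S$ in the pre-annihilator of every weak-$*$ continuous functional vanishing on polynomials in $\mathbf{T}$; this is the technical heart of the Müller-Ptak argument, and echoes the dual-algebra techniques developed for the Drury-Arveson shift and related function-theoretic operator models in $\mathbb{B}_n$.
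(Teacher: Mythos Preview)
The paper does not prove this theorem at all: it is quoted verbatim from M\"uller--Ptak \cite{MuPt} as background for the discussion of spherically quasinormal pairs, with no accompanying argument. There is therefore no ``paper's own proof'' to compare your proposal against.

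That said, a few remarks on your sketch are in order. Your overall architecture---pass to the minimal normal extension, identify it as a spherical unitary, then run a dual-algebra approximation---matches the strategy of the original M\"uller--Ptak paper. However, two points deserve scrutiny. First, your argument that $\sum N_i^*N_i = I_K$ is not quite complete as stated: from $\sum T_i^*T_i = I_H$ you get $\langle (\sum N_i^*N_i - I_K)h,h\rangle = 0$ for $h \in H$, but the operator $\sum N_i^*N_i - I_K$ is not obviously nonnegative on $K$, so ``vanishing of a nonnegative form on a dense set'' does not immediately apply. The clean route is to observe that $\sum N_i^*N_i$ lies in the commutant of the normal tuple $\mathbf{N}$, hence in $W^*(\mathbf{N})$, and then use minimality to conclude its spectral measure is supported on the sphere.

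Second, and more substantively, the ``hard direction'' of your argument is not really a proof but a description of where the proof would live. You correctly identify the obstacle---lifting invariant-subspace information from $H$ to $K$ and running a duality/approximation argument---but you do not carry it out; phrases like ``a careful duality argument placing $S$ in the pre-annihilator\ldots'' are a statement of intent rather than an argument. This is precisely the technical core of \cite{MuPt}, and it does not follow from general dual-algebra folklore: one needs specific properties of the ball algebra (in particular, that $A(\partial\mathbb{B}_n)$ has the approximation property needed to make the Scott Brown machinery go through in several variables). If you intend this as a proof rather than a roadmap, that step must be filled in.
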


\begin{theorem} (\cite{EsPu}) \ For every $n \ge 3$ there exists a non-normal spherical isometry $\mathbf{T}$ such that the polynomially convex hull of $\sigma_T(\mathbf{T})$ is contained in the unit sphere.
\end{theorem}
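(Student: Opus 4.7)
The plan is to realize the desired $\mathbf{T}$ as the restriction of the coordinate-multiplication tuple $(M_{z_1},\dots,M_{z_n})$ on some $L^2(\mu)$ to an invariant closed subspace $\mathcal{H} \subset L^2(\mu)$, where $\mu$ is a positive Borel probability measure supported on the sphere $\partial\mathbb{B}^n$. Since $\sum_{i=1}^n |z_i|^2 \equiv 1$ on $\partial\mathbb{B}^n$, any such restriction automatically satisfies $\sum_i T_i^*T_i = I$, so $\mathbf{T}$ is a spherical isometry in the sense of Definition~\ref{spherisom}. Being the restriction of a normal tuple to an invariant subspace, $\mathbf{T}$ is subnormal; it fails to be normal exactly when $\mathcal{H}$ is a \emph{proper} subspace of $L^2(\mu)$, because a reducing invariant subspace of the normal coordinate tuple on $L^2(\mu)$ must be of the form $\chi_E L^2(\mu)$, and any such subspace containing the constant $1$ must be all of $L^2(\mu)$.

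My concrete strategy would be to select a compact polynomially convex set $K \subset \partial\mathbb{B}^n$ together with a measure $\mu$ whose closed support is $K$ and for which the closure $P^2(\mu)$ of the holomorphic polynomials is a proper subspace of $L^2(\mu)$. Setting $T_i := M_{z_i}|_{P^2(\mu)}$, one gets a subnormal, non-normal spherical isometry, and the Taylor spectrum of $\mathbf{T}$ is contained in the polynomial hull of $K$, which equals $K$ by polynomial convexity. Hence $\sigma_T(\mathbf{T}) \subseteq K \subset \partial\mathbb{B}^n$ and a fortiori its polynomial hull lies in $K$, which is the desired conclusion.

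The main obstacle is the simultaneous existence of such $K$ and $\mu$, and this is exactly where the hypothesis $n \ge 3$ enters. The obvious polynomially convex candidates for $K$, such as the real unit sphere $S^{n-1} = \mathbb{R}^n \cap \partial\mathbb{B}^n$, are so ``small'' that $\bar z_j = z_j$ on $K$, so by Stone--Weierstrass polynomials in $z$ are uniformly dense in $C(K)$; this forces $P^2(\mu)=L^2(\mu)$ and makes $\mathbf{T}$ normal. Conversely, obvious candidates with proper $P^2(\mu)$, such as surface measure on the entire sphere, have $K=\partial\mathbb{B}^n$, whose polynomial hull is the closed ball $\overline{\mathbb{B}^n}$, not contained in $\partial\mathbb{B}^n$ at all. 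The theorem therefore hinges on finding an intermediate $(K,\mu)$: a polynomially convex $K \subsetneq \partial\mathbb{B}^n$ carrying a measure for which polynomial $L^2$-approximation genuinely fails. For $n \ge 3$ such objects are produced by Wermer-type constructions in several-variable approximation theory, which exploit the extra real-dimension room ($\dim_{\mathbb{R}} \partial\mathbb{B}^n = 2n-1 \ge 5$) to hide ``analytic structure'' inside the polynomial hull of $K$ without the hull escaping the sphere. I would invoke such a construction from the function-algebra literature (as I expect Esterle and Puerta do in \cite{EsPu}) and then verify the Taylor-spectrum and non-normality claims routinely; the genuine content of the theorem sits entirely in the Wermer-type step, not in the operator-theoretic packaging above.
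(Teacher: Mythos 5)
The paper itself contains no proof of this statement: it is quoted from \cite{EsPu} (Eschmeier and Putinar --- not ``Esterle and Puerta''), so there is no internal argument to compare yours against; your sketch is, in outline, the same construction used in that reference. Namely, realize $\mathbf{T}$ as $(M_{z_1},\dots,M_{z_n})$ restricted to $P^2(\mu)$ for a measure $\mu$ on a polynomially convex compact set $K\subset\partial\mathbb{B}^n$ with $P^2(\mu)\neq L^2(\mu)$; the spherical-isometry property, and the non-normality via the $\chi_E L^2(\mu)$ description of reducing subspaces together with cyclicity of the constant function $1$, come out exactly as you say. Two points need to be made precise. First, the existence step you defer to ``Wermer-type constructions'' is exactly Basener's example (building on Stolzenberg's arc) of a polynomially convex compact subset $K$ of the unit sphere of $\mathbb{C}^3$ with $P(K)\neq C(K)$; from a nonzero measure $\nu$ on $K$ annihilating all holomorphic polynomials one takes $\mu:=|\nu|$ and writes $d\nu=h\,d\mu$ with $0\neq h\in L^2(\mu)$ orthogonal to the polynomials, giving $P^2(\mu)\subsetneq L^2(\mu)$; this is where $n\ge 3$ enters, and higher $n$ follow by embedding the sphere of $\mathbb{C}^3$ into $\partial\mathbb{B}^n$. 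Second, the containment $\sigma_T(\mathbf{T})\subseteq\widehat{K}=K$ is not automatic from one-variable intuition: you need the spectral inclusion theorem for subnormal tuples, i.e., that the Taylor spectrum of a subnormal tuple is contained in the polynomially convex hull of the Taylor spectrum of its minimal normal extension, the latter spectrum lying in $\operatorname{supp}\mu\subseteq K$. With those two ingredients cited explicitly, your argument is complete and coincides with the approach of \cite{EsPu}.
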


\begin{remark}
(i) \ A. Athavale and S. Poddar have recently proved that a commuting spherically quasinormal pair is always subnormal \cite[Proposition2.1]{AtPo}; this provides a different proof of Corollary \ref{Qua-sub}. \newline 
(ii) \ In a different direction, let $Q_{\mathbf{T}}(X):=T_1^*XT_1+T_2^*XT_2$. \ By induction, it is easy to prove that if $\mathbf{T}$ is spherically quasinormal, then $Q_{\mathbf{T}}^n(I)=(Q_{\mathbf{T}}(I))^n \; (n \ge 0)$; by \cite[Remark 4.6]{ChSh}, $\mathbf{T}$ is subnormal. 
\end{remark}


\subsection{Construction of spherical isometries} \label{last}

In the class of $2$-variable weighted shifts, there is a simple description of spherical isometries, in terms of the weight sequences $\alpha$ and $\beta$, which we now present. \ Since spherical isometries are (jointly) subnormal, we know that the zeroth-row must be subnormal. \ Start then with a subnormal unilateral weighted shift, and denote its weights by $(\alpha_{(k,0)})_{k=0,1,2,\cdots}$. \ Using the identity 
\begin{equation} \label{sphericalidentity}
\alpha_{\mathbf{k}}^2+\beta_{\mathbf{k}}^2=1 \; \; \; (\mathbf{k} \in \mathbb{Z}_+^2),
\end{equation}
and the above mentioned zeroth-row, we can compute $\beta_{(k,0)}$ for $k=0,1,2,\cdots$. \ With these new values available, we can use the commutativity property (\ref{commuting}) to generate the values of $\alpha$ in the first row (see Figure \ref{Figure 1}); that is, 
$$
\alpha_{(k,1)}:=\alpha_{(k,0)}\beta_{(k+1,0)}/\beta_{(k,0)}.
$$
We can now repeat the algorithm, and calculate the weights $\beta_{(k,1)}$ for $k=0,1,2,\cdots$, again using the identity (\ref{sphericalidentity}). \ This is turn leads to the $\alpha$ weights for the second row, and so on.

This simple construction of spherically isometric $2$-variable weighted shifts will allow us to study properties like recursiveness (tied to the existence of finitely atomic Berger measures) and propagation of recursive relations. \ We pursue these ideas in an upcoming manuscript.

\bigskip

\textit{Acknowledgments.} \ Preliminary versions of some of the results in this paper have been announced in \cite{CR3}. \ Some of the calculations in this paper were obtained using the software tool \textit{Mathematica} \cite{Wol}. 


\end{document}